\newtheorem{theorem}{Theorem}[section]
\newtheorem{lemma}[theorem]{Lemma}
\newtheorem{proposition}[theorem]{Proposition}
\newtheorem{corollary}[theorem]{Corollary}
\theoremstyle{definition}
\newtheorem{definition}[theorem]{Definition}
\theoremstyle{remark}
\newtheorem{remark}[theorem]{Remark}
\numberwithin{equation}{section}
\newcommand{\beq}{\begin{equation} \begin{split}}
\newcommand{\feq}{\end{split} \end{equation}}
\definecolor{GRIS}{cmyk}{0.7,0.6,0.5,0.3}
\definecolor{BLEU}{cmyk}{1,0.9,0.1,0}
\definecolor{GM-ASI}{cmyk}{0.6,0,0,0}
\definecolor{EP-MEKA}{cmyk}{0.3,1,0,0}
\definecolor{CP-MRIE}{cmyk}{0.3,0,1,0}
\definecolor{GCCD}{cmyk}{0,0.5,1,0}
        \title{Solving stochastic differential equations with Cartan's exterior differential system}
\author[1]{Paul Lescot \footnote{palescot@gmail.com.}}
  \author[1]{Helene Quintard\footnote{helene.quintard@gmail.com}}
  \author[2]{Jean-Claude Zambrini \footnote{zambrini@cii.fc.ul.pt} }
  \affil[1]{{\small{Normandie Universit\'e, Universit\'e de Rouen, \newline
   Laboratoire de Math\'ematiques Rapha\"el Salem, CNRS,UMR 6085, \newline
   Avenue de l'universit\'e, BP 12, 76801 Saint-Etienne du Rouvray Cedex, \newline
    France.}}}
  \affil[2]{\small{GFM,Faculdade de Ci\^encias, Universidade de Lisboa, \newline Av. Gama Pinto 2 
1649-003 , Lisboa \newline
Portugal.}}
\begin{document}
\maketitle
\begin{abstract}
 The aim of this work is to use systematically the symmetries of the (one dimensional) PDE : 
\begin{equation}
\gamma \dfrac{\partial \eta}{\partial t}= -\dfrac{\gamma^2}{2} \dfrac{\partial^2 \eta}{\partial q^2}+V(t,q)\eta \equiv H\eta \quad , \label{eqdep}
\end{equation}
where $\gamma$ is a positive constant and $V$ a given \enquote{potential}, in order to solve one dimensional Itô's  stochastic differential equations (SDE) of the form :
\begin{equation}
dz(t)=\sqrt{\gamma}dw+ \gamma \dfrac{\partial}{\partial q} \ln \eta(t, z(t)) dt
\end{equation}
where $w$ denotes a Wiener process. The special form of the drift of $z$ (suggested by quantum mechanical considerations) gives, indeed, access to an algebrico-geometric method due, in essence, to E.Cartan, and called the Method of Isovectors. 
A $V$ singular at the origin, as well as a one-factor affine model relevant to stochastic finance, are considered as illustrations of the method.
\end{abstract}




\section{Introduction}

Let us consider a one-dimensional Itô's stochastic differential equation (SDE) for a diffusion process $z(\cdot)$ , of the form 
\begin{equation}
dz(t)=\sqrt{\gamma}dw (t) +\widetilde{B}(t,z(t)) dt \label{Intro.defz}
\end{equation}
where $\gamma$ denotes a positive constant, $w$ a standard Wiener process (or Brownian motion) and $D$ is the drift of the process. The reason why it is called a drift is that $\widetilde{B}$ represents a (conditioned) mean time derivative. More generally if $F=F(t,q)\in C^2_0$ then it is in the domain $\mathcal{D}_D$ of the following \enquote{infinitesimal generator} of $z$ :
\begin{equation}
\begin{split}
D_t F(t,q) =& \left(\dfrac{\partial}{\partial t} +\widetilde{B} \dfrac{\partial}{\partial q} + \gamma \dfrac{\partial^2}{\partial q^2} \right)F(t,q)\\
=&\lim_{ \Delta t\searrow 0} E_t \left[\dfrac{F(t+\Delta t,z(t+\Delta t))-F(t,z(t))}{\Delta t}\right] \label{Intro.generator}
\end{split}
\end{equation}
where $E_t$ denotes the conditional expectation given $z(t)=q$. Eq(\ref{Intro.generator}) results from K.Itô's formula. The theory of equations like Eq(\ref{Intro.defz}) is due to him and has been developed along the same line as the corresponding deterministic first order ODE (ie with $\gamma=0$ \cite{R1}). In particular, it is known that if $\widetilde{B}$ satisfies a Lipschitz condition, $z_s$ is mesurable with respect to a $\sigma$-algbra $\mathcal{P}_s$ and such that, if $E\vert z_s\vert^2 < \infty $, then Eq(\ref{Intro.defz}) with initial condition $z(s)=z_s$ has an unique solution $z(t)$, $t \geqslant s$.\\
As it is familiar in the deterministic context of ODE, the richest mathematical structures appear for equations of second order in time, those introduced historically for Newton systems of equations \cite{R3}. The Lagrangian and Hamiltonian frameworks are indeed at the origin of the modern algebraic, geometric and analytic approaches to dynamics. \\
When the only information we have on $\widetilde{B}$ is, as before, a Lipschitz condition there is no way to connect Eq(\ref{Intro.defz}) with any such frameworks. On the other hand, there is a general approach called \enquote{Stochastic Deformation} \cite{R2} whose aim is precisely to deform along the paths of various kinds of stochastic processes the main geometric and algebraic tools of classical mechanics. In this context, it becomes clear that the drift $\widetilde{B}$ of Eq(\ref{Intro.defz}) needs to be of very special forms. For the elementary partial operator $H$ mentioned in the abstract, $\widetilde{B}$ should be a logarithmic derivative (for a general $H$, however, $\widetilde{B}$ is not a gradient) then it is possible to implement $H$ as playing the same role as a quantum Hamiltonian deforming a Newtonian system with potential V. The main difference with the quantization method is that, although the quantum deformation is not a probabilistic one, the deformation advocated here uses well defined probability measures, in our elementary diffusion case defined on spaces of continuous trajectories. The reason why for $H$ as before, $\widetilde{B}$ should be a logarithmic gradient derivative is, in fact, of quantum origin.
Let $\langle A \rangle_{\phi_t}$ denotes the quantum \enquote{expectation} of any observable $A$ (for the system with Hamiltonian $H$) in the state  $\phi_t$ at time $t$, $\phi_t \in L^2(\mathbb{R})$. Then denoting by $P$ the momentum of this system,  
\begin{equation}
\begin{split}
\langle P \rangle_{\phi_t}=&\int {\phi}_t(q) (-i\sqrt{\gamma} \bigtriangledown \phi_t(q)) dq\\
=&\int \vert {\phi}_t(q)\vert^2  \bigtriangledown (-i\sqrt{\gamma} \log \phi_t(q)) dq 
\end{split}
\end{equation}
So to the observable $P$ sould correspond $\bigtriangledown (-i\sqrt{\gamma} \log\phi_t(q))$, where $q$ should be the value of a random variable with probability density $\vert {\phi}_t(q)\vert^2 dq$.\\
Here we are going to stay as close as possible from this quantum context, but definitely on the probabilistic side where probabilistic processes make sense.\\
The content of this paper is the following :\\
In the section \ref{HJB}, we describe the stochastic deformation of the Hamilton-Jacobi equation for our underlying Newtonian system. It is known in the litterature of optimal control theory as Hamilton-Jacobi-Bellman (HJB). We summarize, then, the method of isovectors and its results for HJB, following \cite{LZ2}.\\
Section 3 is devoted to an algebraic approach to the deformation of the classical Liouville $2$-form $\Omega$, the foundation of Symplectic Geometry.\\
In relation with classical Hamiltonian mechanics, a key property of $\Omega$ is to be invariant under the time evolution. We shall prove the stochastic deformation of this property in Section 4.
The rest of the paper, divided into various sections, considers in details two illustrations of the method of isovectors for solving stochastic differential equations. The first one focuses on the symmetries for a potential $V$ in $H$ of the form $V= \dfrac{C}{q^2}+Dq^2$ and any values of the constants $C$ and $D$. The second describes an alternative approach to an affine interest rate model \cite{Henon, LeblancScaillet, theseHouda}. As a matter of fact, the above potential $V$ appears in analysis of this model, justifying its careful study in terms of $C$ and $D$.\\
The conclusion will explain what are the main orientations and motivations of this unusual approach to stochastic differential equation and what we are expecting from it. In particular, its potential relations with a general concept of stochastic integrability for SDE, still missing, will be described. Of course, independently of the motivations of the program of stochastic deformation, we hope that the methods used here will be of some interest in the other applied fields. For instance, in stochastic finance, the idea to introduce methods coming from quantum physics, cf, for instance \cite{BB} or \cite{EEH} to mention only those, is not really new. Also the use of Lie symmetry methods to compute some expectations of diffusion processes is already known (cf ,for example,  \cite{CrLe}). But it does not seem that quantum-like symmetries have really been exploited,up to now,in the study of solutions of stochastic differential equations,in a dynamical perspective like the one advocated here.

\section{Isovectors of Hamilton-Jacobi-Bellman}
\label{HJB}
For a smooth potential $V=V(t,q)$, let us consider the PDE 
\begin{equation}
\gamma \dfrac{\partial \eta}{\partial t}= -\dfrac{\gamma^2}{2} \dfrac{\partial^2 \eta}{\partial q^2}+V(t,q)\eta \equiv H\eta \quad . \label{HJB.edpeta}
\end{equation}
If a solution $\eta$ exists and is positive (typically on a finite time interval) then,
\begin{equation}
S(t,q)=-\gamma \ln \eta(t,q) \label{HJB.defS}
\end{equation}
 solves Hamilton-Jacobi-Bellman (HJB) equation 
 \begin{equation}
 -\dfrac{\partial S}{\partial t} + \dfrac{1}{2} \left(\dfrac{\partial S}{\partial q}\right)^2-V- \dfrac{\gamma}{2} \dfrac{\partial^2 S}{\partial q^2}=0 \label{HJB.EDPS}.
 \end{equation}
 
Eq(\ref{HJB.EDPS}) is interpreted as a stochastic deformation of (one of the two adjoint \cite{CH-Z}) Hamilton-Jacobi equation for the classical system underlying the \enquote{Hamiltonian} $H$.\\
It will be convenient to introduce partial derivatives of Eq(\ref{HJB.EDPS}) as new variables
\begin{equation}
B=-\dfrac{\partial S}{\partial q} \qquad , \qquad \qquad E=-\dfrac{\partial S}{\partial t} \label{HJB.defBE}.
\end{equation}

Cartan has shown (cf \cite{HE71}) that the geometric interpretation of HJB Eq(\ref{HJB.EDPS}) in the $1$-jet $J^1$ of independant variables $(t,q,S,E,B)$ corresponds to the vanishing of the differential forms (where PC stands for Poincar\'e-Cartan)
\begin{equation}
\omega=Bdq+Edt+dS\equiv \omega_{PC} +dS \label{HJB.defomegaPC},
\end{equation}

which is a contact $1$-form, of its exterior derivative,
\begin{equation}
\Omega=d\omega=dBdq+dEdt \label{HJB.Omega},
\end{equation}

and of a two form $\beta$ reproducing HJB itself, namely,
\begin{equation}
\beta=(E+\dfrac{1}{2}B^2-V)dqdt+\dfrac{\gamma}{2} dBdt \label{HJB.beta}.
\end{equation}

Our probabilistic interpretation will hold, in fact, not on $J^1$ but on the section of $S$ on the base (\enquote{integral}) submanifold $(t,q)$, i.e. $(t,q,S(t,q),-\dfrac{\partial S}{\partial t}(t,q),-\dfrac{\partial S}{\partial q}(t,q))$.

Indeed, it is the $q$ variable which will be later on promoted to the status of the random variable $z(t)$ solving Eq(\ref{Intro.defz}). The rest will follow from appliying Itô's calculus to the \enquote{Exterior differential system} $(\omega, \Omega, \beta)$ properly interpreted along the paths $t\mapsto z(t)$.\\
Those three forms generate a \enquote{differential} ideal $I$. This means that for any form in $I$, its exterior derivative is also in $I$. \\
A vector field $N$ on $J^1$, $N=+N^q \dfrac{\partial}{\partial q}+N^t \dfrac{\partial}{\partial t}+N^S \dfrac{\partial}{\partial S}+N^B \dfrac{\partial}{\partial B}+N^E \dfrac{\partial}{\partial E}$ such that  its Lie dragging $\mathcal{L}_N$ leaves the ideal invariant, i.e., $\mathcal{L}_N(I) \subset I$ has been called an isovector in \cite{HE71}. The theory of isovectors was also inspired by Cartan. We shall call $\mathcal{G}_V$ their Lie algebra for Eq(\ref{HJB.EDPS}); of special importance will be its Lie subalgebra (cf [8], p.214): \\ $\mathcal{H}_V=\left\lbrace N \in \mathcal{G}_V \text{ such that } \dfrac{\partial N^S}{\partial S}=0 \right\rbrace$.

It follows from the proof of Theorem (3.1) in \cite{LZ2} that the isovectors of Hamilton-Jacobi-Bellman take the form :
\begin{equation}
\begin{split}
N^q&=\frac{1}{2}T'_N(t)q+l(t),\\
N^t&=T_N(t),\\
N^S&=h(t,q,S),\\
N^B&=\frac{1}{2}T'_N(t)B-\frac{\partial h}{\partial q}+B \frac{\partial h}{\partial S},\\
N^E&=-(\frac{1}{2}T''_N(t)q+l'(t))B-T'_N(t)E-\frac{\partial h}{\partial t}+E\frac{\partial h}{\partial S}, \label{HJB.defISO}
\end{split}
\end{equation}
where $h=h(t,q,S)$ can be expressed as

\begin{equation}
\begin{split}
&h(t,q,S)=\gamma p(t,q)e^{\frac{1}{\gamma}S}-\phi(t,q),\\
\label{HJB.conditiononh}
\end{split}
\end{equation}
and $p$, $\phi$, $l$ such that 
\begin{equation}
\begin{split}
&\phi(t,q)=\frac{1}{4}T_N''q^2+l'q-\sigma(t),\\
&\gamma \frac{\partial p}{\partial t}=-\dfrac{\gamma^2}{2}\frac{\partial^2 p}{\partial q^2}+pV,\\
&-\frac{\partial \phi}{\partial t}+T_N \frac{\partial V}{\partial t}+(\frac{1}{2} T_N'q+l)\frac{\partial V}{\partial q}-\dfrac{\gamma}{2}\frac{\partial^2 \phi}{\partial q^2}+T_N'V=0.
\label{HJB.conditiononpphil}
\end{split}
\end{equation}

If $\mu$ denotes the parameter of the transformation generated by the isovector $N$, we have :

\begin{equation}
e^{\mu N} :(t,q,S,E,B) \mapsto (t_\mu, q_\mu ,S_\mu ,E_\mu ,B_\mu).
\end{equation}

Following Eq(\ref{HJB.defS}) we shall introduce as well 
\begin{equation}
\eta_{\mu}(t_\mu,q_\mu)=e^{-\frac{1}{\gamma}S_\mu} \label{HJB.etamu}.
\end{equation}

By definition of the symmetry group of Eq(\ref{HJB.edpeta}) (cf \cite{OPJ}), $\eta_\mu$ solves the same equation but in the new coordinates $(t_\mu,q_\mu)$.\\
It is therefore natural to denote as well the one parameter group tranforming $\eta$ into $\eta_\mu$ by $e^{\mu \widetilde{N}}$, for some generator $\widetilde{N}$. When $N$ is as before, one checks easily that :

\begin{equation}
\widetilde{N}=-N^q \dfrac{\partial}{\partial q}-N^t \dfrac{\partial}{\partial t}-\dfrac{1}{\gamma}N^S \label{HJB.defNtilde}
\end{equation}

(There is a sign mistake in Eq(3.33) of \cite{LZ2} regarding the third component of $\widetilde{N}$.)

\begin{lemma} 
\label{HJB.lemmeNtilde}
The relation $N \mapsto - \widetilde{N}$ is a linear map compatible with the commutator, i.e. a Lie Algebra morphism.
\end{lemma}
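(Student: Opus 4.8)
The plan is to verify directly that the map $\Phi: N \mapsto -\widetilde{N}$ is linear and respects brackets, using the explicit formula \pref{HJB.defNtilde}. Linearity is almost immediate: by \pref{HJB.defNtilde} we have $\widetilde{N} = -N^q\,\partial_q - N^t\,\partial_t - \frac{1}{\gamma}N^S$, and each of the components $N^q$, $N^t$, $N^S$ of an isovector depends linearly on the data $(T_N, l, h)$ parametrizing $N$ in \pref{HJB.defISO}--\pref{HJB.conditiononh}; since the assignment $N \mapsto (N^q, N^t, N^S)$ is linear and $\widetilde N$ is built linearly (and without constant term) from these three components, $N \mapsto \widetilde N$ and hence $N \mapsto -\widetilde N$ is linear. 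So the substance of the lemma is the bracket identity $\widetilde{[N_1,N_2]} = [\widetilde{N_1},\widetilde{N_2}]$ (equivalently $\Phi[N_1,N_2] = [\Phi N_1, \Phi N_2]$, the extra signs cancelling since the bracket is bilinear).

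First I would make precise the two Lie algebras involved. On the source side, $\mathcal{G}_V$ is the Lie algebra of isovector fields on $J^1$ with coordinates $(t,q,S,E,B)$, with the usual Lie bracket of vector fields. On the target side, $\widetilde{N}$ is a vector field in the variables $(t,q)$ together with a multiplication/scaling term $-\frac{1}{\gamma}N^S$ acting on $\eta$; the natural home for it is the Lie algebra of infinitesimal symmetries of the linear PDE \pref{HJB.edpeta}, realized as first-order operators $a(t,q)\partial_t + b(t,q)\partial_q + c(t,q,S)$ acting on functions $\eta$ (with $S = -\gamma\ln\eta$), with bracket the commutator of operators. The key conceptual point, which makes the proof essentially formal, is that $e^{\mu N}$ and $e^{\mu\widetilde N}$ are two realizations of the \emph{same} one-parameter symmetry group: $e^{\mu N}$ acts on the jet coordinates $(t,q,S,E,B)$ while $e^{\mu\widetilde N}$ acts on solutions $\eta$ of \pref{HJB.edpeta} via $\eta_\mu = e^{-\frac{1}{\gamma}S_\mu}$, as recalled just before the lemma. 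Once this correspondence of flows is established, the correspondence of generators is the derivative at $\mu=0$, and correspondence of generators automatically intertwines brackets — because the bracket on a Lie algebra is recovered from the group law (the Baker--Campbell--Hausdorff second-order term), which is the same on both sides.

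Concretely, I would argue as follows. Step one: check that $N \mapsto \widetilde N$ is injective on $\mathcal{H}_V$ up to the part of $N^E$, $N^B$ that is determined by $(N^t,N^q,N^S)$ through \pref{HJB.defISO} — indeed $N$ is \emph{entirely} reconstructed from $(T_N, l, h)$, hence from $(N^t, N^q, N^S)$, so $\widetilde N$ loses no information and the map is a linear isomorphism onto its image. Step two: the functoriality of exponentiation. Because $N$ is an isovector, its flow $e^{\mu N}$ maps integral manifolds of the ideal $I$ to integral manifolds, hence sends the graph $(t,q,S(t,q),-\partial_t S,-\partial_q S)$ of an HJB solution to the graph of another HJB solution; passing through \pref{HJB.defS}, this is exactly the statement that $\eta \mapsto \eta_\mu$ is a point symmetry of \pref{HJB.edpeta} generated by $\widetilde N$. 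Thus the diagram of flows commutes, so for $N_1, N_2 \in \mathcal{G}_V$ the composite flows $e^{\mu N_1}e^{\nu N_2}$ and $e^{\mu\widetilde{N_1}}e^{\nu\widetilde{N_2}}$ correspond; differentiating the group commutator $e^{-\nu N_2}e^{-\mu N_1}e^{\nu N_2}e^{\mu N_1}$ in $\mu$ and $\nu$ at the origin yields $\widetilde{[N_1,N_2]} = [\widetilde{N_1}, \widetilde{N_2}]$, and then $\Phi[N_1,N_2] = -\widetilde{[N_1,N_2]} = [-\widetilde{N_1}, -\widetilde{N_2}] = [\Phi N_1, \Phi N_2]$, the two sign flips cancelling by bilinearity of the bracket.

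The main obstacle is not conceptual but bookkeeping: one must make sure that the ``extra'' components $N^E$ and $N^B$, which are \emph{not} visible in $\widetilde N$, do not secretly contribute to the bracket $[N_1, N_2]$ in a way that would break the correspondence — i.e. one needs that the triple $([N_1,N_2]^t, [N_1,N_2]^q, [N_1,N_2]^S)$ depends only on the triples $(N_i^t, N_i^q, N_i^S)$ and not on $N_i^E, N_i^B$. This follows because $N^t = T_N(t)$ depends on $t$ alone, $N^q = \frac12 T_N'(t)q + l(t)$ on $(t,q)$ alone, and $N^S = h(t,q,S)$ on $(t,q,S)$ alone — none involves $E$ or $B$ — so in computing the $t$-, $q$- and $S$-components of $[N_1,N_2] = \sum_i N_1(N_2^i)\partial_i - N_2(N_1^i)\partial_i$ the terms $N_j^E\partial_E$ and $N_j^B\partial_B$ annihilate $N_{j'}^t, N_{j'}^q, N_{j'}^S$. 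Alternatively, and more cleanly, one avoids this entirely by invoking the flow argument of Step two, which never mentions $N^E$ or $N^B$ at all. I would present the flow-based argument as the main proof and relegate the direct component computation to a remark confirming consistency.
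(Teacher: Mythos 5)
Your main (flow--based) argument contains a sign error that makes it internally inconsistent, and the error sits exactly at the point the lemma is designed to address. You claim the flow correspondence yields $\widetilde{[N_1,N_2]} = [\widetilde{N_1},\widetilde{N_2}]$, and then write $-\widetilde{[N_1,N_2]} = [-\widetilde{N_1},-\widetilde{N_2}]$ ``by bilinearity''; but $[-\widetilde{N_1},-\widetilde{N_2}] = +[\widetilde{N_1},\widetilde{N_2}]$, so your two displayed identities taken together force $[\widetilde{N_1},\widetilde{N_2}]=0$, which is false in general (e.g.\ $[\widetilde{M}_5,\widetilde{M}_6]=\frac{1}{\gamma}=-\widetilde{M}_4\neq 0$ in the case $C=D=0$). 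The correct intermediate identity, which the paper obtains by direct computation, is $\widetilde{[N,M]} = -[\widetilde{N},\widetilde{M}]$: the map $N\mapsto\widetilde{N}$ \emph{reverses} brackets, and that is precisely why the lemma is stated for $N\mapsto-\widetilde{N}$ rather than for $N\mapsto\widetilde{N}$. Your flow heuristic misses the contravariance hidden in Eq(\ref{HJB.etamu}): the transformed solution $\eta_\mu$ is prescribed by its value at the \emph{new} point $(t_\mu,q_\mu)$, so as a function of the original variables it involves the inverse base transformation; this is the origin of the minus signs on $N^t\partial_t$ and $N^q\partial_q$ in Eq(\ref{HJB.defNtilde}) and of the bracket reversal. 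Until that is tracked explicitly, the group--commutator/BCH step does not establish the lemma, and as written it proves (incorrectly) that $N\mapsto-\widetilde N$ is an \emph{anti}-morphism.

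The computation you relegate to a ``remark confirming consistency'' is in fact the paper's entire proof, and it is the sound part of your proposal: write $\widetilde{N}=-N^t\partial_t-N^q\partial_q-\frac{1}{\gamma}N^S$, expand $[\widetilde{N},\widetilde{M}]$ as a commutator of first--order operators, and compare componentwise with $\widetilde{[N,M]}=-[N,M]^t\partial_t-[N,M]^q\partial_q-\frac{1}{\gamma}[N,M]^S$. Your key bookkeeping observation --- that the components $[N,M]^t$, $[N,M]^q$, $[N,M]^S$ depend only on the triples $(N_i^t,N_i^q,N_i^S)$, because by Eq(\ref{HJB.defISO}) none of these involves $E$ or $B$ (and, for $N\in\mathcal{H}_V$, $N^S$ is independent of $S$) --- is correct and is exactly what makes the comparison close up. Carrying that comparison out explicitly both completes the proof and exposes where the sign in your flow argument went wrong.
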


 \begin{proof}
\begin{equation}
\begin{split}
 \widetilde{N}=&-N^t\frac{\partial }{\partial t}-N^q\frac{\partial }{\partial q}-\dfrac{1}{\gamma}N^S\\
\widetilde{M}=&-M^t\frac{\partial }{\partial t}-M^q\frac{\partial }{\partial q}-\dfrac{1}{\gamma} M^S\\
[\widetilde{N},\widetilde{M}]=&(N^t\frac{\partial M^t}{\partial t}-M^t\frac{\partial N^t}{\partial t}+N^q\frac{\partial M^t}{\partial q}-M^q\frac{\partial N^t}{\partial q})\frac{\partial}{\partial t}\\
& +(N^t\frac{\partial M^q}{\partial t}-M^t\frac{\partial N^q}{\partial t}+N^q\frac{\partial M^q}{\partial q}-M^q\frac{\partial N^q}{\partial q})\frac{\partial}{\partial q}\\
& +\dfrac{1}{\gamma}(N^t\frac{\partial M^S}{\partial t}-M^t\frac{\partial N^S}{\partial t}+N^q\frac{\partial M^S}{\partial q}-M^q\frac{\partial N^S}{\partial q})
\end{split}
\end{equation}

This expression can be simplified, because according to Eq(\ref{HJB.defISO}) $N^t$ is always independent from $q$ :

\begin{equation}
\begin{split}
[\widetilde{N},\widetilde{M}]=&(N^t\frac{\partial M^t}{\partial t}-M^t\frac{\partial N^t}{\partial t})\frac{\partial}{\partial t}\\
& +(N^t\frac{\partial M^q}{\partial t}-M^t\frac{\partial N^q}{\partial t}+N^q\frac{\partial M^q}{\partial q}-M^q\frac{\partial N^q}{\partial q})\frac{\partial}{\partial q}\\
& +\dfrac{1}{\gamma}(N^t\frac{\partial M^S}{\partial t}-M^t\frac{\partial N^S}{\partial t}+N^q\frac{\partial M^S}{\partial q}-M^q\frac{\partial N^S}{\partial q}).
\end{split}
\end{equation}
Let's show that $N\mapsto -\widetilde{N}$ is an algebra morphism, that is 
\begin{equation}
-\widetilde{[N,M]}=[-\widetilde{N},-\widetilde{M}]
\end{equation}
\begin{equation}
\label{demmorphisme}
\begin{split}
\widetilde{[N,M]}&=-[N,M]^t\dfrac{\partial}{\partial t}-[N,M]^q\dfrac{\partial}{\partial q}-\dfrac{1}{\gamma}[N,M]^S\\
[N,M]^t&=N(M^t)-M(N^t)\\
&=N^t\dfrac{\partial M^t}{\partial t}-M^t\dfrac{\partial N^t}{\partial t}\\
[N,M]^q&=N^t\frac{\partial M^q}{\partial t}+N^q\frac{\partial M^q}{\partial q}-M^t\frac{\partial N^q}{\partial t}-M^q\frac{\partial N^q}{\partial q}\\
[N,M]^S&=N^t\frac{\partial M^S}{\partial t}+N^q\frac{\partial M^S}{\partial q}-M^t\frac{\partial N^S}{\partial t}-M^q\frac{\partial N^S}{\partial q}\\
\end{split}
\end{equation}
Therefore  $-\widetilde{[N,M]}=[-\widetilde{N},-\widetilde{M}]$, so the mapping is a algebra morphism.
\end{proof}

Denoting by $\widetilde{\mathcal{H}}_V$ the set $\lbrace \widetilde{N} \text{ such that } N \in \mathcal{H}_V \rbrace$ it is clear that it is a Lie algebra, whose dimension depends on $V$. The associated local Lie group is precisely the above mentioned symmetry group of Eq (\ref{HJB.edpeta}).

\section{An algebraic approach to the symplectic $2$-form $\Omega$}

Let us define $\mathcal{J}_V$ by 
\begin{equation}
\mathcal{J}_V= \{N\in \mathcal{G}_V  \text{ such that }  N^S=c \text{ for } c\in \mathbb{R}\}
\end{equation} 
When the potential $V=0$, $\mathcal{J}_V$ was denoted by $\mathcal{H}_2$ in \cite{LZ1}.

\begin{lemma}
\label{Jv-Kv}
$\mathcal{J}_V$ is a Lie subalgebra of $\mathcal{G}_V$.
\end{lemma}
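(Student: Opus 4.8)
The plan is first to dispose of the easy part: $\mathcal{J}_V$ is visibly a linear subspace of $\mathcal{G}_V$ — it contains $0$, and if $N^S=c$ and $M^S=c'$ are real constants then $(\alpha N+\beta M)^S=\alpha c+\beta c'\in\mathbb{R}$ — and $\mathcal{G}_V$ is already known to be a Lie algebra, so $[N,M]\in\mathcal{G}_V$ automatically. Hence the entire content of the statement is to check that $\mathcal{J}_V$ is stable under the bracket, i.e. that $[N,M]^S\in\mathbb{R}$ whenever $N,M\in\mathcal{J}_V$. (One may also note in passing that $\mathcal{J}_V\subseteq\mathcal{H}_V$, since $N^S$ constant forces $\partial N^S/\partial S=0$.)

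For the bracket I would compute the $S$-component directly. For vector fields on $J^1$ one has $[N,M]^S=N(M^S)-M(N^S)$, and the structural input read off from \pref{HJB.defISO}--\pref{HJB.conditiononh} is that $N^S=h(t,q,S)$ is a function of $(t,q,S)$ alone — it involves neither $B$ nor $E$ — and likewise for $M^S$. Consequently the (lengthy) $B$- and $E$-components of $N$ and $M$ never enter, and
\begin{equation*}
[N,M]^S=N^t\frac{\partial M^S}{\partial t}+N^q\frac{\partial M^S}{\partial q}+N^S\frac{\partial M^S}{\partial S}-M^t\frac{\partial N^S}{\partial t}-M^q\frac{\partial N^S}{\partial q}-M^S\frac{\partial N^S}{\partial S},
\end{equation*}
which on $\mathcal{H}_V$ reduces to the expression for $[N,M]^S$ used in the proof of Lemma~\ref{HJB.lemmeNtilde}. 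When $N^S$ and $M^S$ are real constants, every partial derivative on the right-hand side vanishes, so $[N,M]^S=0\in\mathbb{R}$, i.e. $[N,M]\in\mathcal{J}_V$; this closes the argument.

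A longer alternative would proceed through the explicit parametrisation: writing $N^S=\gamma p_N(t,q)e^{S/\gamma}-\phi_N(t,q)$ with $\phi_N=\tfrac14 T_N''q^2+l_N'q-\sigma_N(t)$, the condition $N^S\equiv\mathrm{const}$ is equivalent to $p_N\equiv 0$, $T_N''\equiv 0$, $l_N'\equiv 0$ and $\sigma_N$ constant, and one would then verify that these constraints are preserved under the bracket using the relations in \pref{HJB.defISO}. I would not pursue this, as the direct computation above is shorter. There is no genuine obstacle here; the only point that deserves care is precisely the observation that $N^S$ depends on $(t,q,S)$ only, which is what guarantees that the complicated $N^B,N^E$ components drop out of $[N,M]^S$ — after that, the vanishing of the derivatives of a constant does everything.
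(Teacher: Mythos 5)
Your proof is correct and follows essentially the same route as the paper's: both reduce the claim to showing $[N,M]^S=N(M^S)-M(N^S)=0$, which holds because the $S$-components of isovectors in $\mathcal{J}_V$ are constants annihilated by any derivation. Your coordinate expansion and the remarks on the linear structure and on $\mathcal{J}_V\subseteq\mathcal{H}_V$ only make explicit what the paper leaves implicit.
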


\begin{proof}
Using the same notation as for the definition of an isovector $N$,
$\forall(N, N') \in \mathcal{J}_V^2$ : 
\begin{equation}
\begin{split}
[N,N']^S &=[N,N'](S)\\
&= N(N'(S))-N'(N(S))\\
&=N(N'^S)-N'(N^S).
\end{split}
\end{equation}

This means that if $N$ and $N'$ are in $\mathcal{J}_V$, $\left[ N,N' \right]^S=0$ i.e. $\left[ N,N' \right]\in \mathcal{J}_V$, so $\mathcal{J}_V$ is a subalgebra of $\mathcal{H}_V$, hence of $\mathcal{G}_V$
\end{proof} 

Now we are going to need a \enquote{section map} $\theta_\eta$ describing what happen on the integral $2$-submanifold $(t,q)$ where our probabilistic interpretation will hold. This requires a positive $\gamma$ (cf Eq(\ref{Intro.defz})), which is a backward parabolic equation. As an initial value problem, it is known that, in general, it has no solution. However, where the solution exists, it is unique \cite{Fried}. Typically, given a very smooth initial condition, a classical solution will exist only up to a finite time $T$.\\
Since we are interested here in algebraic aspects, the fact that Eq(\ref{eqdep}) has, generally, only local solutions will not be a problem ; our results will be valid on their interval of existence. In fact, the special class of diffusions underlying our construction (\enquote{Bernstein} or \enquote{reciprocal} diffusions) are defined, by construction, on finite but arbitrary time intervals, so providing the solutions of a new kind of probabilistic boundary value problems (Cf \cite{R2} for more).\\ 

The logarithmic transform of variable in Eq(\ref{HJB.defS}) suggests the following definition.

\begin{definition}
Let $\eta$ be a positive solution of Eq(\ref{eqdep}). There is a unique differential algebra morphism. 
$\theta_\eta : \bigwedge T^* \mathbb{R}^5 \longrightarrow \bigwedge T^* \mathbb{R}^2$, where $\bigwedge T^*M$ denotes the bundle of differential forms, such that, $\theta_\eta(t)=t$, $\theta_\eta(q)=q$, $\theta_\eta(S)=-\gamma \ln (\eta) $, $\theta_\eta(E)=\dfrac{\gamma}{\eta}\dfrac{\partial \eta}{\partial t}$ and $\theta_\eta(B)=\dfrac{\gamma}{\eta}\dfrac{\partial \eta}{\partial q}$.

\end{definition}

The differential ideal $I\subset \ker \theta_\eta$ since the contact form $\omega$ and $\beta$ are in $\ker \theta_\eta$.

\begin{lemma} For $\omega$ defined in Eq(\ref{HJB.defomegaPC}), $\theta_\eta(\omega)=0$.
\end{lemma}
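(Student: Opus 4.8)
The plan is to unwind the definition of $\theta_\eta$ on each of the three terms of $\omega=B\,dq+E\,dt+dS$ and observe that the contributions cancel in pairs. The only structural fact needed is that $\theta_\eta$ is a \emph{differential} algebra morphism, so it intertwines the exterior derivative: $\theta_\eta\circ d=d\circ\theta_\eta$. In particular $\theta_\eta(dt)=d\theta_\eta(t)=dt$, $\theta_\eta(dq)=d\theta_\eta(q)=dq$, and $\theta_\eta(dS)=d\theta_\eta(S)=d(-\gamma\ln\eta)$.

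First I would compute $\theta_\eta(dS)$ explicitly. Since $\eta$ is a positive function of $(t,q)$,
\begin{equation}
\theta_\eta(dS)=d(-\gamma\ln\eta)=-\frac{\gamma}{\eta}\,d\eta=-\frac{\gamma}{\eta}\frac{\partial\eta}{\partial t}\,dt-\frac{\gamma}{\eta}\frac{\partial\eta}{\partial q}\,dq .
\end{equation}
Next I would apply the morphism property to the two remaining terms, using the prescribed images of $B$ and $E$:
\begin{equation}
\theta_\eta(B\,dq)=\theta_\eta(B)\,\theta_\eta(dq)=\frac{\gamma}{\eta}\frac{\partial\eta}{\partial q}\,dq,\qquad
\theta_\eta(E\,dt)=\theta_\eta(E)\,\theta_\eta(dt)=\frac{\gamma}{\eta}\frac{\partial\eta}{\partial t}\,dt .
\end{equation}

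Finally I would add the three pieces. Linearity of $\theta_\eta$ gives
\begin{equation}
\theta_\eta(\omega)=\theta_\eta(B\,dq)+\theta_\eta(E\,dt)+\theta_\eta(dS)
=\frac{\gamma}{\eta}\frac{\partial\eta}{\partial q}\,dq+\frac{\gamma}{\eta}\frac{\partial\eta}{\partial t}\,dt-\frac{\gamma}{\eta}\frac{\partial\eta}{\partial t}\,dt-\frac{\gamma}{\eta}\frac{\partial\eta}{\partial q}\,dq=0 ,
\end{equation}
which is the claim. There is essentially no obstacle here: the statement is exactly the assertion that the images $\theta_\eta(B)$ and $\theta_\eta(E)$ were \emph{designed} (via the logarithmic transform $S=-\gamma\ln\eta$ and Eq.\ (\ref{HJB.defBE})) so that the contact form $\omega$ pulls back to zero on the graph of $S(t,q)$. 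The only point requiring a word of care is the justification that $\theta_\eta$ commutes with $d$, but this is built into the word ``differential'' in the definition of the morphism, so it may simply be invoked. One could add the remark that the same computation shows $\theta_\eta(\omega_{PC})=\theta_\eta(\omega)-\theta_\eta(dS)=-d\theta_\eta(S)$, i.e. $\omega_{PC}$ pulls back to $-dS$ evaluated on the section, consistent with Eq.\ (\ref{HJB.defomegaPC}).
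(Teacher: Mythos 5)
Your proof is correct and follows essentially the same route as the paper's: both use the fact that $\theta_\eta$ is a differential algebra morphism to write $\theta_\eta(\omega)=d(\theta_\eta(S))+\theta_\eta(E)\,dt+\theta_\eta(B)\,dq$ and then observe the cancellation with $d(-\gamma\ln\eta)$. The only difference is that you expand $d(-\gamma\ln\eta)$ into its two partial-derivative terms explicitly, which the paper leaves implicit.
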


\begin{proof}

\begin{equation}
\begin{split}
\theta_\eta (\omega)&= \theta_\eta(dS+Edt+B d q)\\
&= d(\theta_\eta(S))+ \theta_\eta(E)d(\theta_\eta(t))+\theta_\eta(B) d(\theta_\eta(q))\\
&=d(-\gamma \ln(\eta))+\gamma\dfrac{1}{\eta}\dfrac{\partial \eta}{\partial t}dt + \gamma \dfrac{1}{\eta} \dfrac{\partial \eta}{\partial q}dq\\
&= 0.
\end{split}
\end{equation}

When the potential $V=0$, it is contained Proposition 5.2 of \cite{LZ1}
\end{proof}

\begin{definition}
For $(N_1,N_2)\in \mathcal{H}_V^2$, we set 
\begin{equation}
\Omega_\eta (N_1,N_2) :=\theta_\eta(\Omega(N_1,N_2)) \in \mathcal{C^\infty}(\mathbb{R}^2)
\end{equation}
 (see \cite{LZ1}, \textit{Lemma 5.5} ; that form had already been defined probabilistically in \cite{Zambrini2001}).\\
\end{definition}

Furthermore, for $N \in \mathcal{H}_V$, we shall write (as in \cite{LZ2}, p.211) \\
\begin{equation}
\begin{split}
F_N:&= N\rfloor \omega\\
(&=\omega(N)).
\end{split}
\end{equation}

We have :
\begin{equation}
\begin{split}
F_N:&= N\rfloor (dS+Edt+Bdq)\\
&= N\rfloor dS+(N\rfloor E) dt + E(N\rfloor dt) +(N\rfloor B)dq +B(N\rfloor dq)\\
&= \mathcal{L}_N(S)+E \mathcal{L}_N(t)+B \mathcal{L}_N(q)\\
&=N^S+E N^t+BN^q
\end{split}
\end{equation}
(see for example [HE], p.654) will play the role of a deformed contact Hamiltonian. For the relations with the usual concept of Hamiltonian in classical mechanics, cf \S 2 in \cite{LZ2}.
\begin{proposition}
For each $N\in \mathcal{H}_V$ and each $\eta > 0$ solution of Eq(\ref{eqdep}), one has :
\begin{equation}
\widetilde{N}(\eta)=-\dfrac{1}{\gamma} \eta \theta_\eta(F_N).
\end{equation}
\label{Ntilde}
\end{proposition}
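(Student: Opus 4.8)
The plan is to compute both sides directly using the explicit formulas from Section 2 and the definition of $\theta_\eta$, and to check they agree. I would start from the right-hand side. By the computation preceding Proposition \ref{Ntilde}, $F_N = N^S + E N^t + B N^q$, so applying $\theta_\eta$ (a differential algebra morphism fixing $t,q$) gives
\[
\theta_\eta(F_N) = \theta_\eta(N^S) + \theta_\eta(E) N^t + \theta_\eta(B) N^q,
\]
provided $N^t$ and $N^q$ involve only $t,q$ (true by Eq(\ref{HJB.defISO}), since $N^t=T_N(t)$ and $N^q=\tfrac12 T'_N q + l(t)$), while $N^S = h(t,q,S)$ must have $\theta_\eta$ applied with $S$ replaced by $-\gamma\ln\eta$. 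Using $\theta_\eta(E)=\tfrac{\gamma}{\eta}\partial_t\eta$ and $\theta_\eta(B)=\tfrac{\gamma}{\eta}\partial_q\eta$, and writing $h_\eta(t,q):=h(t,q,-\gamma\ln\eta)$, I get
\[
-\frac{1}{\gamma}\eta\,\theta_\eta(F_N) = -\frac{1}{\gamma}\eta\, h_\eta - N^t\,\partial_t\eta - N^q\,\partial_q\eta.
\]

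Next I would compute the left-hand side. By Eq(\ref{HJB.defNtilde}), $\widetilde{N} = -N^q\partial_q - N^t\partial_t - \tfrac{1}{\gamma}N^S$, where the last term acts as multiplication; but the subtlety is what ``$N^S$'' means when $\widetilde N$ acts on a function of $(t,q)$ only. The resolution is Eq(\ref{HJB.etamu})–the relation $S = -\gamma\ln\eta$: along the section, $\widetilde N(\eta)$ should be read as $-\tfrac{1}{\gamma}\eta\,\widetilde N(S)$ composed appropriately, or more cleanly, one differentiates $\eta_\mu = e^{-S_\mu/\gamma}$ in $\mu$ at $\mu=0$, getting $\widetilde N(\eta) = -\tfrac{1}{\gamma}\eta\cdot(\text{first-order variation of }S)$. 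The first-order variation of $S$ under the flow, when pulled back to the base $(t,q)$, is exactly $-N^q\partial_q S - N^t\partial_t S + N^S$ evaluated on the section, i.e. $N^q\frac{\gamma}{\eta}\partial_q\eta + N^t\frac{\gamma}{\eta}\partial_t\eta + h_\eta$ after substituting $S=-\gamma\ln\eta$ and using Eq(\ref{HJB.defBE}). Multiplying by $-\tfrac{1}{\gamma}\eta$ reproduces exactly the right-hand side above, so the two sides match term by term.

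The main obstacle — and the step I would write most carefully — is the bookkeeping of \emph{what $\widetilde N$ means as an operator on functions of $(t,q)$ versus as a vector field on $J^1$ restricted to the section}. The formula $\widetilde N = -N^q\partial_q - N^t\partial_t - \tfrac1\gamma N^S$ has a genuine vector-field part plus a zeroth-order part, and the coherence of this with the geometric picture (that $\eta_\mu$ solves the same PDE in new coordinates, per \cite{OPJ}) is what makes $\widetilde N(\eta)$ well-defined; I would invoke Eq(\ref{HJB.etamu}) and the symmetry-group interpretation to justify that $\widetilde N(\eta) = \frac{d}{d\mu}\big|_{\mu=0}\eta_\mu$ and that this equals $-\tfrac1\gamma\eta$ times the section-pullback of the $S$-component of the flow. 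Everything else is the substitution $S\mapsto -\gamma\ln\eta$, $B\mapsto \frac{\gamma}{\eta}\partial_q\eta$, $E\mapsto\frac{\gamma}{\eta}\partial_t\eta$ into the already-derived expression $F_N = N^S+EN^t+BN^q$, which is routine. One should also note the sign correction flagged after Eq(\ref{HJB.defNtilde}) is exactly what makes the signs come out consistently here.
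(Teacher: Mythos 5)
Your proposal is correct and follows essentially the same route as the paper: both sides reduce to $-N^t\partial_t\eta-N^q\partial_q\eta-\tfrac{1}{\gamma}N^S\eta$ after substituting $\theta_\eta(E)=\tfrac{\gamma}{\eta}\partial_t\eta$, $\theta_\eta(B)=\tfrac{\gamma}{\eta}\partial_q\eta$ into $F_N=N^S+EN^t+BN^q$. The only difference is that the paper dispenses with your flow-based justification of the zeroth-order term by simply observing that for $N\in\mathcal{H}_V$ the components $N^t$, $N^q$, $N^S$ depend on $(t,q)$ alone, so $\widetilde N$ acts directly as the stated first-order operator plus multiplication.
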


\begin{remark} \textit{Lemma 5.4 (i)} of \cite{LZ1} follows at once from the fact that $ \mathcal{V}_N= -\widetilde{N}$ (\cite{LZ2}, p.194).
\end{remark}

\begin{proof}
\begin{equation}
\begin{split}
\widetilde{N}(\eta)=&-N^t \dfrac{\partial \eta }{\partial t}- N^q \dfrac{\partial \eta}{\partial q}- \dfrac{1}{\gamma} N^S \eta\\
&= -\dfrac{1}{\gamma}\eta \left[N^t \dfrac{\gamma}{\eta} \dfrac{\partial \eta }{\partial t}+N^q \dfrac{\gamma}{\eta}\dfrac{\partial \eta}{\partial q}+N^S \right]
\end{split}
\end{equation}
Using the definition of $ \theta_\eta$ and the fact Eq(\ref{HJB.defISO}) that $ N^t$, $N^q$ and $N^S$ only depend upon $t$ and $q$, one finds that: 
\begin{equation}
\begin{split}
\widetilde{N}(\eta) =& - \dfrac{1}{\gamma} \eta \left[ \theta_\eta (N^t) \theta_ \eta(E) +\theta_\eta (N^q) \theta_\eta(B) + \theta_\eta(N^S) \right]\\
=& -\dfrac{1}{\gamma}\eta \theta_\eta (N^tE+N^q B+N^S)\\
=& -\dfrac{1}{\gamma}\eta \theta_\eta(F_N)
\end{split}
\end{equation}
\end{proof}

The following result is of a purely algebro-geometrical nature.\\
\begin{lemma}(see \textit{Definition 4.1 in \cite{LZ1}, in the case $V=0$})\\
$\forall (\delta, \delta') \in (T^1M)^2$ : \\
\begin{equation}
\Omega (\delta, \delta')= \left(\delta(B) \delta'(q) - \delta(q) \delta'(B)\right)+\left(\delta(E) \delta'(t)-\delta(t)\delta'(E) \right).
\end{equation}
\end{lemma}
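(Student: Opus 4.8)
The plan is to compute both sides of the asserted identity directly from the definitions of $\Omega$ and of the interior product, exploiting the fact that $\Omega = dB\wedge dq + dE\wedge dt$ was established in Eq(\ref{HJB.Omega}). First I would recall that for a $2$-form written as a wedge of $1$-forms, $(\alpha\wedge\beta)(\delta,\delta') = \alpha(\delta)\beta(\delta') - \alpha(\delta')\beta(\delta)$. Applying this to $\Omega = dB\wedge dq + dE\wedge dt$ and using $dx(\delta) = \delta(x)$ for each coordinate function $x\in\{t,q,B,E\}$, one obtains
\begin{equation}
\begin{split}
\Omega(\delta,\delta') &= (dB\wedge dq)(\delta,\delta') + (dE\wedge dt)(\delta,\delta')\\
&= \bigl(dB(\delta)\,dq(\delta') - dB(\delta')\,dq(\delta)\bigr) + \bigl(dE(\delta)\,dt(\delta') - dE(\delta')\,dt(\delta)\bigr)\\
&= \bigl(\delta(B)\,\delta'(q) - \delta(q)\,\delta'(B)\bigr) + \bigl(\delta(E)\,\delta'(t) - \delta(t)\,\delta'(E)\bigr),
\end{split}
\end{equation}
which is exactly the claimed formula.

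The only genuine point requiring care — and hence the main (very mild) obstacle — is to justify the first line, namely that $\Omega = d\omega$ reduces to $dB\wedge dq + dE\wedge dt$ with no surviving $dS$-terms. This is precisely the content of Eq(\ref{HJB.Omega}): differentiating $\omega = dS + E\,dt + B\,dq$ gives $d\omega = dE\wedge dt + dB\wedge dq$, since $d(dS)=0$. So I would simply cite Eq(\ref{HJB.Omega}) and then the computation above is entirely mechanical, using only the defining property of the exterior derivative on coordinate functions and the standard antisymmetrization formula for the wedge of two $1$-forms. No analytic input, no reference to the isovector structure, and no positivity of $\eta$ is needed here — consistent with the remark that the statement is of a purely algebro-geometrical nature.

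One small thing worth spelling out in the write-up is the sign bookkeeping in $(\alpha\wedge\beta)(\delta,\delta')$, since with the convention $\alpha\wedge\beta = \alpha\otimes\beta - \beta\otimes\alpha$ the cross terms come out as stated, whereas the alternative normalization (dividing by $2$) would change the constant. I would fix the convention once at the start of the proof so that the final expression matches the lemma verbatim, and then conclude.
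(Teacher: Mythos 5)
Your proof is correct, but it takes a genuinely different route from the paper's. The paper does not use the coordinate expression $\Omega=dB\wedge dq+dE\wedge dt$ at all; instead it applies the intrinsic (Cartan/Palais) formula $d\omega(\delta,\delta')=\delta(\omega(\delta'))-\delta'(\omega(\delta))-\omega([\delta,\delta'])$ directly to the contact form $\omega=dS+E\,dt+B\,dq$, expands $\omega(\delta')=E\,\delta'(t)+B\,\delta'(q)+\delta'(S)$, and checks that all the second-order terms $\delta(\delta'(\cdot))$ and the Lie-bracket terms cancel, leaving exactly the claimed expression. Your argument instead takes Eq(\ref{HJB.Omega}) as the starting point and evaluates the two wedge products pointwise via $(\alpha\wedge\beta)(\delta,\delta')=\alpha(\delta)\beta(\delta')-\alpha(\delta')\beta(\delta)$. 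What your approach buys is brevity and the absence of any Lie brackets or second derivatives of vector fields; what it costs is the dependence on the wedge normalization convention, which you rightly flag and must fix explicitly (the paper's intrinsic computation is immune to that choice, and in effect re-derives the coordinate formula for $d\omega$ along the way, which is consistent with the same unnormalized convention). Both proofs are purely algebro-geometric and equally valid; yours is arguably the more economical given that Eq(\ref{HJB.Omega}) is already available in the text.
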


\begin{proof}
\begin{equation}
\begin{split}
\Omega(\delta, \delta')=& d \omega (\delta, \delta')\\
& [\text{by definition of $\Omega$}]\\
=& \delta(\omega(\delta'))- \delta'(\omega(\delta))- \omega([\delta, \delta'])\\
&[\text{see e.g \cite{docarmo}, \textit{Proposition 2, p.49}}]\\
=& \delta \left(E \delta'(t) +B \delta'(q) + \delta'(S) \right) - \delta' \left(E \delta(t) +B \delta(q) + \delta(S) \right) \\
&- \left( E [\delta, \delta'] (t) + B[\delta, \delta'](q)+[\delta, \delta'](S)\right)\\
=& \delta(E) \delta'(t) +E \delta(\delta'(t)) + \delta(B)\delta'(q) +B \delta(\delta'(q))+\delta(\delta'(S))\\
&-( \delta'(E) \delta(t) +E \delta'(\delta(t)) + \delta'(B)\delta(q) +B \delta'(\delta(q))+\delta'(\delta(S)))\\
&-E \left(\delta (\delta'(t))-\delta'(\delta(t))\right) - B \left(\delta (\delta'(q))-\delta'(\delta(q))\right)- \left(\delta (\delta'(S))-\delta'(\delta(S))\right)\\
=& \left(\delta(E) \delta'(t)- \delta'(E)\delta(t)\right)
+\left(\delta(B) \delta'(q)- \delta'(B)\delta(q)\right).
\end{split}
\end{equation}
\end{proof}

\begin{theorem}
For each $ N \in \mathcal{H}_V$ and each $\delta \in T^1M$, one has : \\
\begin{equation}
\Omega(N, \delta)= - \delta (F_N).
\label{propOmega}
\end{equation}
\end{theorem}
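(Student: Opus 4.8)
The plan is to use the intrinsic formula for $\Omega = d\omega$ established in the previous lemma, namely $\Omega(\delta_1,\delta_2) = \bigl(\delta_1(B)\delta_2(q) - \delta_1(q)\delta_2(B)\bigr) + \bigl(\delta_1(E)\delta_2(t) - \delta_1(t)\delta_2(E)\bigr)$, applied with $\delta_1 = N$ and $\delta_2 = \delta$. Writing $N = N^q \partial_q + N^t \partial_t + N^S \partial_S + N^B \partial_B + N^E \partial_E$ and noting that $N(q) = N^q$, $N(t) = N^t$, $N(B) = N^B$, $N(E) = N^E$, the lemma gives immediately
\begin{equation}
\Omega(N,\delta) = N^B\,\delta(q) - N^q\,\delta(B) + N^E\,\delta(t) - N^t\,\delta(E). \nonumber
\end{equation}
So the whole content of the theorem is to show that the right-hand side equals $-\delta(F_N)$, where $F_N = N^S + E N^t + B N^q$.

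Next I would simply differentiate $F_N$ along $\delta$ using the Leibniz rule: $\delta(F_N) = \delta(N^S) + \delta(E)N^t + E\,\delta(N^t) + \delta(B)N^q + B\,\delta(N^q)$. Comparing with the expression above, matching $\Omega(N,\delta) = -\delta(F_N)$ amounts to verifying
\begin{equation}
N^B = -\delta(N^S) - E\,\delta(N^t) - B\,\delta(N^q) - \bigl(\text{terms}\bigr)\cdots \nonumber
\end{equation}
more precisely, after cancelling the $\delta(E)N^t$ and $\delta(B)N^q$ terms that appear on both sides, what remains to prove is
\begin{equation}
N^B\,\delta(q) + N^E\,\delta(t) = -\delta(N^S) - E\,\delta(N^t) - B\,\delta(N^q). \nonumber
\end{equation}
Since $N^S$, $N^t$, $N^q$ depend only on $t$ and $q$ (by Eq(\ref{HJB.defISO})), we have $\delta(N^S) = \frac{\partial N^S}{\partial t}\delta(t) + \frac{\partial N^S}{\partial q}\delta(q)$ and similarly for $N^t$ and $N^q$. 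Collecting the coefficients of $\delta(q)$ and of $\delta(t)$ separately (the coefficients of $\delta(S)$, $\delta(B)$, $\delta(E)$ all vanish because $N^S,N^t,N^q$ are independent of $S,B,E$), the identity reduces to the two scalar equations
\begin{equation}
N^B = -\frac{\partial N^S}{\partial q} - B\frac{\partial N^q}{\partial q} - E\frac{\partial N^t}{\partial q}, \qquad N^E = -\frac{\partial N^S}{\partial t} - B\frac{\partial N^q}{\partial t} - E\frac{\partial N^t}{\partial t}. \nonumber
\end{equation}

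The final step is to check these two scalar identities directly against the explicit formulas for the isovector components in Eq(\ref{HJB.defISO}). For the first, $\frac{\partial N^t}{\partial q} = 0$ since $N^t = T_N(t)$, and with $N^S = h(t,q,S)$ one has $\frac{\partial N^S}{\partial q} = \frac{\partial h}{\partial q}$; since $N \in \mathcal{H}_V$ forces $\frac{\partial h}{\partial S} = 0$ (this is where the hypothesis $N\in\mathcal{H}_V$ enters), the stated formula $N^B = \frac{1}{2}T'_N B - \frac{\partial h}{\partial q}$ must be reconciled with $-\frac{\partial N^S}{\partial q} - B\frac{\partial N^q}{\partial q}$, and indeed $N^q = \frac12 T'_N(t) q + l(t)$ gives $\frac{\partial N^q}{\partial q} = \frac12 T'_N$, so $-B\frac{\partial N^q}{\partial q} = -\frac12 T'_N B$ — wait, this has the wrong sign, which signals that one must be careful about the sign convention relating $B$ to $\partial S/\partial q$; the cleanest route is to note that on the section these are tied by $B = -\partial S/\partial q$, or alternatively to observe that the claimed identities are exactly the integrability (closure) conditions that define an isovector, so they hold by the very derivation of Eq(\ref{HJB.defISO}). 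The second identity is checked the same way using $N^E$ from Eq(\ref{HJB.defISO}). I expect the main obstacle to be precisely bookkeeping the signs in $B = -\partial_q S$, $E = -\partial_t S$ correctly so that the raw components $N^B, N^E$ match; conceptually nothing deep is needed beyond the intrinsic $d\omega$ formula, the Leibniz rule, and the explicit isovector form restricted to $\mathcal{H}_V$.
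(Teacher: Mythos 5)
Your reduction is correct, and it is essentially the coordinate-level version of the paper's own argument, though the two look quite different on the surface. The paper never touches the explicit components: it writes $\Omega(N,\delta)=d\omega(N,\delta)=N(\omega(\delta))-\delta(\omega(N))-\omega([N,\delta])$ and uses the invariance $\mathcal{L}_N\omega=0$ (valid for $N\in\mathcal{H}_V$, quoted from \cite{LZ2}) to get $N(\omega(\delta))=\mathcal{L}_N(\delta\rfloor\omega)=[N,\delta]\rfloor\omega=\omega([N,\delta])$, after which everything cancels except $-\delta(\omega(N))=-\delta(F_N)$. Your two scalar identities are precisely the $dq$- and $dt$-components of that same fact, since
\begin{equation}
\mathcal{L}_N\omega=N\rfloor\Omega+d(N\rfloor\omega)=N^B\,dq+N^E\,dt+dN^S+E\,dN^t+B\,dN^q \,\, ,\nonumber
\end{equation}
so your second escape route (appealing to the conditions that define the isovector) is the right one and is not circular: it is literally the statement $\mathcal{L}_N\omega=0$ on which the paper's proof rests. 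The sign clash you ran into is not an error in your derivation: the component $N^B$ as printed in Eq(\ref{HJB.defISO}) carries a sign typo, and $+\frac{1}{2}T'_N B$ should read $-\frac{1}{2}T'_N B$; you can confirm this against the explicit $V=0$ isovectors listed later in the paper ($N_4$ has $N^B=-B$ with $T'_N=2$, $h=0$, and $N_6$ has $N^B=2q-2tB$ with $T'_N=4t$, $h=\gamma t-q^2$, whereas the printed formula would give $+B$ and $2q+2tB$ respectively). With the corrected sign both of your identities check out directly, the $E\,\frac{\partial N^t}{\partial q}$ term vanishing since $N^t$ depends only on $t$. Your first escape route, however, should be discarded: the theorem is an identity on the jet space where $B$, $E$ and $S$ are independent coordinates, so the section relation $B=-\partial S/\partial q$ is not available --- that relation only appears after applying $\theta_\eta$, i.e.\ in $\Omega_\eta$, not in $\Omega$ itself.
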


\begin{remark} In case $V= 0$, we recover \textit{Proposition 4.2} in \cite{LZ1}.
\end{remark}

\begin{proof}
One has : \\
\begin{equation}
\begin{split}
\Omega(N, \delta)=& d\omega(N, \delta)\\
=& N(\omega(\delta))- \delta(\omega(N))-\omega([N, \delta]),
\end{split}
\end{equation}
according to the same formula as above.\\
But $\mathcal{L}_N(\omega)=0$ (see \cite{LZ2}, p.214), whence 
\begin{equation}
\begin{split}
N(\omega(\delta))=& N(\delta \rfloor \omega)\\
=& \mathcal{L}_N(\delta \rfloor \omega)\\
=&[N,\delta]\rfloor \omega\\
=& \omega([N, \delta]),\\
\end{split}
\end{equation}
whence,
\begin{equation}
\begin{split}
\Omega(N, \delta) =& \omega ([N, \delta]) - \delta(\omega(N))- \omega([N,\delta])\\
=& - \delta (\omega(N))
=- \delta(F_N).
\end{split}
\end{equation}
\end{proof}

\begin{corollary}
\label{lemma7}
\label{defOmega}
For all $(N, N') \in \mathcal{H}_V^2$ : \\
\begin{equation}
\Omega_\eta(N, N')=- \gamma \dfrac{\widetilde{[N, N']}(\eta)}{\eta}.
\end{equation}
\end{corollary}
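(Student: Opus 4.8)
The plan is to combine the previous three results. First I would apply the Theorem (Eq.~\ref{propOmega}) with $\delta$ specialized to the isovector $N'$: since $N' \in \mathcal{H}_V \subset T^1M$ (we may view $N'$ as a vector field on $J^1$, hence an element of $T^1M$), we obtain
\begin{equation}
\Omega(N, N') = -N'(F_N).
\end{equation}
Next, I would apply $\theta_\eta$ to both sides. By the Definition of $\Omega_\eta$, the left side becomes $\theta_\eta(\Omega(N,N')) = \Omega_\eta(N,N')$. For the right side I need to understand $\theta_\eta(N'(F_N))$; the key observation is that $F_N = N^S + E N^t + B N^q$ is a function on $J^1$, and $\theta_\eta$ is a differential algebra morphism, so it commutes with exterior differentiation and hence (on functions) intertwines the action of vector fields appropriately once one uses Proposition~\ref{Ntilde}.

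The cleaner route is to avoid pushing $\theta_\eta$ through $N'$ directly and instead use Proposition~\ref{Ntilde} twice. Write $G := \frac{1}{\gamma}\eta\,\theta_\eta(F_N)$, so that Proposition~\ref{Ntilde} reads $\widetilde{N}(\eta) = -G$, i.e. $\theta_\eta(F_N) = -\gamma\,\widetilde{N}(\eta)/\eta$. I would then compute $\widetilde{N'}\bigl(\widetilde{N}(\eta)\bigr)$ and compare. Concretely: by Proposition~\ref{Ntilde} applied to the pair $(N, N')$ one should relate $\Omega_\eta(N,N') = \theta_\eta(-N'(F_N))$ to the second-order differential operator $\widetilde{N'}\widetilde{N}$ acting on $\eta$. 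The essential identity to establish is
\begin{equation}
\theta_\eta\bigl(N'(F_N)\bigr) = \gamma\,\frac{\widetilde{N'}\,\widetilde{N}(\eta) - \widetilde{[N',N]}(\eta)}{\eta},
\end{equation}
after which antisymmetrizing (using $\Omega(N,N') = -N'(F_N)$ together with the antisymmetry $\Omega(N,N') = -\Omega(N',N)$, or equivalently writing $N'(F_N) - N(F_{N'})$ in terms of $[N,N']$) collapses the two second-order terms and leaves precisely $-\gamma\,\widetilde{[N,N']}(\eta)/\eta$. Here the morphism property from Lemma~\ref{HJB.lemmeNtilde} (that $N \mapsto -\widetilde{N}$ respects the bracket) is what guarantees the bracket term is $\widetilde{[N,N']}$ rather than some uncontrolled expression.

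The main obstacle I anticipate is the bookkeeping in passing $\theta_\eta$ across the vector field $N'$: $\theta_\eta$ is only a morphism of differential algebras on forms, and $N'$ acts on functions on $J^1$ whereas $\widetilde{N'}$ acts on functions on $\mathbb{R}^2$, so one must verify the intertwining $\theta_\eta \circ N' = \widetilde{N'} \circ \theta_\eta$ on the relevant functions (this uses that $N'^t, N'^q, N'^S$ depend only on $(t,q)$, exactly as in the proof of Proposition~\ref{Ntilde}, plus the fact that $\theta_\eta$ pulls $E, B$ back to logarithmic derivatives of $\eta$, and that $\eta$ itself solves Eq.~\ref{eqdep} so the relevant consistency holds). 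Once that intertwining is in hand, the computation is the routine Leibniz-rule manipulation sketched above: expand $\widetilde{N'}\bigl(\eta\,\theta_\eta(F_N)/\gamma\bigr)$ using Proposition~\ref{Ntilde} twice, antisymmetrize in $N \leftrightarrow N'$, and invoke Lemma~\ref{HJB.lemmeNtilde} to identify the surviving bracket term. I would therefore structure the proof as: (1) invoke the Theorem to get $\Omega(N,N') = -N'(F_N)$; (2) apply $\theta_\eta$ and use the intertwining property plus Proposition~\ref{Ntilde}; (3) antisymmetrize and conclude via Lemma~\ref{HJB.lemmeNtilde}.
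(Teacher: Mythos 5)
Your step (1) is fine, but the heart of your argument --- the intermediate identity $\theta_\eta\bigl(N'(F_N)\bigr) = \gamma\bigl(\widetilde{N'}\widetilde{N}(\eta) - \widetilde{[N',N]}(\eta)\bigr)/\eta$ and the antisymmetrization built on it --- is wrong, and the difficulty you defer as ``bookkeeping'' is exactly where it breaks. Since $N'$ is itself an isovector, $\mathcal{L}_{N'}\omega = 0$, so $N'(F_N) = \mathcal{L}_{N'}(N\rfloor\omega) = [N',N]\rfloor\omega = F_{[N',N]}$, and Proposition \ref{Ntilde} applied to $[N',N]\in\mathcal{H}_V$ gives $\theta_\eta\bigl(N'(F_N)\bigr) = -\gamma\,\widetilde{[N',N]}(\eta)/\eta$ on the nose; your identity would therefore force $\widetilde{N'}\widetilde{N}(\eta)=0$, which is false in general. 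Feeding your identity into the antisymmetrization also yields the wrong constant: using $2\Omega(N,N') = N(F_{N'}) - N'(F_N)$ together with $[\widetilde{N},\widetilde{N'}] = -\widetilde{[N,N']}$ (Lemma \ref{HJB.lemmeNtilde}) one lands on $-\tfrac{3}{2}\gamma\,\widetilde{[N,N']}(\eta)/\eta$ instead of $-\gamma\,\widetilde{[N,N']}(\eta)/\eta$. The intertwining $\theta_\eta\circ N' = \widetilde{N'}\circ\theta_\eta$ you hope to verify also fails already on constants: $\theta_\eta(N'(1)) = 0$ while $\widetilde{N'}(\theta_\eta(1)) = -N'^S/\gamma$, because $\widetilde{N'}$ is not a vector field but a first-order operator with a zeroth-order part.

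The missing idea is to exploit the isovector property of \emph{both} arguments \emph{before} applying $\theta_\eta$, rather than trying to push $\theta_\eta$ across the vector field $N'$ afterwards. The paper's proof does precisely this: from $\Omega(N,N') = N(\omega(N')) - N'(\omega(N)) - \omega([N,N'])$ and $\mathcal{L}_N\omega = \mathcal{L}_{N'}\omega = 0$, one gets $N(\omega(N')) = \omega([N,N'])$ and $N'(\omega(N)) = \omega([N',N])$, hence the purely algebraic identity $\Omega(N,N') = \omega([N,N']) = F_{[N,N']}$ on $J^1$. Since $\mathcal{H}_V$ is a Lie algebra, $[N,N']\in\mathcal{H}_V$, and a single application of Proposition \ref{Ntilde} to $[N,N']$ finishes the proof. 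No second-order operators, no antisymmetrization, and $\theta_\eta$ is only ever evaluated on the function $F_{[N,N']}$, which is exactly the situation Proposition \ref{Ntilde} already handles.
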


\begin{remark} In case $V=0$, one recovers \cite{LZ1}, \textit{Theorem (5.6)}.
\end{remark}

\begin{proof}
\begin{equation}
\begin{split}
\Omega(N, N')=&d\omega (N, N')\\
=&N(\omega (N'))-N'(\omega(N))-\omega([N, N']),
\end{split}
\end{equation}
once again via the same formula.\\
But 
\begin{equation}
\begin{split}
N(\omega (N'))=& \mathcal{L}_N(\omega(N'))\\
=&\mathcal{L}_N(N' \rfloor \omega)\\
=& [N,N']\rfloor \omega + N' \rfloor \mathcal{L}_N(\omega)\\
&[\text{again, see \cite{HE71} p.54}]\\
=& [N, N']\rfloor \omega\\
=& \omega([N, N']), 
\end{split}
\end{equation}
and similarly \\
\begin{equation}
N'(\omega(N))=\omega([N', N]),
\end{equation}
whence 
\begin{equation}
\begin{split}
\Omega (N, N')=& \omega([N, N'])- \omega([N', N])- \omega([N, N'])\\
=& -\omega ([N', N])\\
=& \omega ([N, N'])\\
=& F_{[N,N']}
\end{split}
\end{equation}
and
\begin{equation}
\begin{split}
\Omega_\eta(N,N')=&\theta_\eta(\Omega(N,N'))\\
=& \theta_\eta(F_{[N,N']})\\
=&-\dfrac{\gamma}{\eta}\widetilde{[N, N']}(\eta)\\
&[\text{by \textit{Proposition \ref{Ntilde}} applied to }[N,N']]\\
=& - \gamma \dfrac{\widetilde{[N,N']}(\eta)}{\eta}.
\end{split}
\end{equation}
\end{proof}

\begin{proposition}
For $(N, N') \in \mathcal{J}_V^2$\\
\begin{equation}
\Omega_\eta(N,N') = \gamma \dfrac{[N, N'](\eta)}{\eta}.
\end{equation}
\end{proposition}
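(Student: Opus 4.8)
The plan is to reduce the statement for $(N,N')\in\mathcal{J}_V^2$ to the already–proved Corollary~\ref{lemma7}, which applies to pairs in $\mathcal{H}_V^2$. First I would observe that $\mathcal{J}_V\subseteq\mathcal{H}_V$: indeed, if $N\in\mathcal{J}_V$ then $N^S=c$ is a constant, so $\dfrac{\partial N^S}{\partial S}=0$, which is exactly the defining condition of $\mathcal{H}_V$. Hence Corollary~\ref{lemma7} gives
\begin{equation}
\Omega_\eta(N,N')=-\gamma\,\dfrac{\widetilde{[N,N']}(\eta)}{\eta}.
\end{equation}

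Next I would compute $\widetilde{[N,N']}$ explicitly. By Lemma~\ref{Jv-Kv}, $[N,N']\in\mathcal{J}_V$, and moreover from the proof of that lemma one has $[N,N']^S=N(N'^S)-N'(N^S)=N(c')-N'(c)=0$, since $N'^S$ and $N^S$ are constants. So the $S$–component of $[N,N']$ vanishes. Using the formula $\widetilde{M}=-M^t\dfrac{\partial}{\partial t}-M^q\dfrac{\partial}{\partial q}-\dfrac{1}{\gamma}M^S$ from Eq(\ref{HJB.defNtilde}), applied to $M=[N,N']$, the third term drops out and we get
\begin{equation}
\widetilde{[N,N']}(\eta)=-[N,N']^t\dfrac{\partial\eta}{\partial t}-[N,N']^q\dfrac{\partial\eta}{\partial q}=-[N,N'](\eta),
\end{equation}
where the last equality holds because $\eta=\eta(t,q)$ depends only on $t$ and $q$, so $[N,N'](\eta)=[N,N']^t\dfrac{\partial\eta}{\partial t}+[N,N']^q\dfrac{\partial\eta}{\partial q}$ (the $S$, $B$, $E$ components of the vector field contribute nothing when acting on a function of $t,q$ alone — and in any case $[N,N']^S=0$).

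Substituting this into the expression from Corollary~\ref{lemma7} yields
\begin{equation}
\Omega_\eta(N,N')=-\gamma\,\dfrac{-[N,N'](\eta)}{\eta}=\gamma\,\dfrac{[N,N'](\eta)}{\eta},
\end{equation}
which is the claim. I do not expect any real obstacle here: the argument is a short bookkeeping exercise combining the inclusion $\mathcal{J}_V\subseteq\mathcal{H}_V$, the vanishing of the $S$–component of the bracket (already visible in the proof of Lemma~\ref{Jv-Kv}), and the definition of $\widetilde{\,\cdot\,}$. The only point requiring a moment's care is making sure the $S$–component of $[N,N']$ genuinely vanishes — i.e. that the Lie bracket of two vector fields whose $S$–components are constants again has constant (in fact zero) $S$–component — which is precisely the content of Lemma~\ref{Jv-Kv} and its proof, so nothing new must be established.
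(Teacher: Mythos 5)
Your proof is correct and follows essentially the same route as the paper's: both reduce to Corollary~\ref{lemma7}, use the vanishing of $[N,N']^S$ from the proof of Lemma~\ref{Jv-Kv}, and then evaluate $\widetilde{[N,N']}$ on $\eta(t,q)$ to flip the sign. The only (harmless) addition is that you spell out the inclusion $\mathcal{J}_V\subseteq\mathcal{H}_V$, which the paper leaves implicit in Lemma~\ref{Jv-Kv}.
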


\begin{remark}
In case $V=0$, we recover \textit{Corollary} 5.7 in \cite{LZ1} (where $ \mathcal{J}_0$ was denoted by $ \mathcal{H}_2$).
\end{remark}

\begin{proof}
As $N\in \mathcal{J}_V$ and $N' \in \mathcal{J}_V$, one has $[N,N']^S=0$ (see the proof of \textit{Lemma \ref{Jv-Kv})}, whence
\begin{equation}
\begin{split}
-\widetilde{[N, N']}=&[N,N']^t \dfrac{\partial}{ \partial t}+ [N, N']^q \dfrac{\partial}{\partial q}+ \dfrac{1}{\gamma}[N, N']^S \cdot\\
=& [N,N']^t\dfrac{\partial}{\partial t}+[N,N']^q\dfrac{\partial}{\partial q}
\end{split}
\end{equation}
and
\begin{equation}
\begin{split}
- \widetilde{[N, N']}(\eta) =& [N, N']^t \dfrac{\partial \eta}{\partial t}+ [N, N']^q \dfrac{\partial \eta}{\partial q}\\
=& [N, N'](\eta), 
\end{split}
\end{equation}
as $\eta$ is a function of $(q,t)$.\\
Therefore \\
\begin{equation}
\begin{split}
\Omega(N,N')_\eta=& -\gamma \dfrac{\widetilde{[N, N' ]}(\eta)}{\eta} \qquad \qquad [\text{by \textit{Corollary \ref{lemma7}} }]\\
=& \gamma \dfrac{[N, N'](\eta)}{\eta}
\end{split}
\end{equation}
as desired. 
\end{proof}

\section{Construction of martingales}
In the stochastic deformation of classical mechanics advocated here the crucial role of first integral or constant of motion is played by martingales of underlying diffusions. This aspect has already been analyzed in the context of a stochastic Noether Theorem \cite{TZ}. We are going to show that our stochastic $2$-form $\Omega$. is also a martingale.

Let :
\begin{equation}
\begin{split}
D_H=& \dfrac{\partial}{\partial t} - \dfrac{1}{\gamma}H\\
=&\dfrac{\partial}{\partial t}+ \dfrac{\gamma}{2} \dfrac{\partial^2}{\partial q}- \dfrac{1}{\gamma}V \cdot
\end{split}
\end{equation}

\begin{lemma}
For each $N\in \mathcal{H}_V$, $[\widetilde{N}, D_H]= -T_N'D_H$
\label{lemmesol}
\end{lemma}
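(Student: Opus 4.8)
The plan is to compute the commutator $[\widetilde{N}, D_H]$ as a first-order differential operator acting on functions of $(t,q)$ and identify it with $-T_N' D_H$. Write $\widetilde{N} = -N^t\partial_t - N^q \partial_q - \tfrac{1}{\gamma}N^S$, where by Eq.(\ref{HJB.defISO}) we have $N^t = T_N(t)$, $N^q = \tfrac12 T_N'(t)q + l(t)$, and $N^S = h(t,q,S)$; however, on the section we must use $\theta_\eta(N^S)$, i.e. replace $S$ by $-\gamma\ln\eta$. Actually the cleaner route is to regard $\widetilde{N}$ directly as the vector field on $(t,q)$-space with a zeroth-order term, namely $\widetilde{N} = -T_N\partial_t - (\tfrac12 T_N' q + l)\partial_q + c_N$ where $c_N$ is multiplication by a function of $(t,q)$ coming from $-\tfrac1\gamma N^S$ (after substituting for $S$); the precise form of $c_N$ will matter only through how $D_H$ differentiates it.

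First I would expand $[\widetilde{N}, D_H] = \widetilde{N}D_H - D_H\widetilde{N}$ term by term. The second-order piece $\tfrac{\gamma}{2}\partial_q^2$ of $D_H$ commutes with the zeroth-order part of $\widetilde{N}$ up to first-order correction terms, and with the first-order part $-\tfrac12 T_N' q\,\partial_q$ it produces exactly a $-T_N'$ multiple of $\tfrac{\gamma}{2}\partial_q^2$ plus lower-order terms (this is the standard scaling computation: $[q\partial_q, \partial_q^2] = -2\partial_q^2$). The $\partial_t$ piece of $D_H$ acting against the time-dependent coefficients $T_N(t)$, $T_N'(t)$, $l(t)$ of $\widetilde{N}$ generates the $-T_N'\partial_t$ contribution together with terms involving $T_N''$, $l'$. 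The potential term $-\tfrac1\gamma V$ contributes $\tfrac1\gamma(\widetilde{N}V)$-type expressions. I would then collect: the $\partial_t$-coefficient should be $-T_N'$, the $\partial_q$-coefficient should be $-T_N' N^q$ pattern, and the zeroth-order coefficient should be $\tfrac{T_N'}{\gamma}V$ — i.e. everything must assemble into $-T_N'\big(\partial_t + \tfrac{\gamma}{2}\partial_q^2 - \tfrac1\gamma V\big) = -T_N' D_H$.

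The key input making this work is precisely the third equation in Eq.(\ref{HJB.conditiononpphil}), the determining equation relating $\phi$, $T_N$, $l$ and $V$: the leftover zeroth-order and first-order terms (those built from $T_N''$, $l'$, $\partial_t\phi$, $\partial_q\phi$, $\partial_q^2\phi$, $T_N\partial_t V$, $(\tfrac12 T_N'q+l)\partial_q V$, $T_N'V$) are exactly the combination that equation forces to vanish. I would also use the second equation in Eq.(\ref{HJB.conditiononpphil}), which says the coefficient function $p$ solves Eq.(\ref{eqdep}), to handle the $p e^{S/\gamma}$ part of $h$; on the section $S = -\gamma\ln\eta$ this becomes $\gamma p/\eta$, and $D_H$ applied to it vanishes because $p$ is annihilated by the same backward heat operator that annihilates $\eta$ (so the ratio is "harmonic" for $D_H$ in the appropriate sense — more precisely, one checks $D_H(p/\eta)\cdot\eta$ reduces via the two PDEs). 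The main obstacle will be bookkeeping: keeping track of which terms are evaluated on the section via $\theta_\eta$ versus on $J^1$, and making sure the $S$-dependence of $h$ is differentiated correctly, since $\partial h/\partial S \neq 0$ in general. I would organize the computation by first doing the $V=0$, $h$ linear-in-$S$ case to see the scaling mechanism cleanly, then add the $\phi$ terms (absorbed by the determining equation) and finally the $p e^{S/\gamma}$ term (absorbed by the PDE for $p$).
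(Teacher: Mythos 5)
Your computational plan is the same as the paper's: expand $[\widetilde{N},D_H]$ term by term as a differential operator in $(t,q)$, obtain the $\partial_q^2$-part from the scaling bracket $[q\partial_q,\partial_q^2]=-2\partial_q^2$, the $\partial_t$-part from differentiating $N^t=T_N(t)$, and absorb the remaining zeroth-order terms with the third determining equation of Eq.\pref{HJB.conditiononpphil}. That is exactly how the paper proceeds, and that part of the plan is sound. Two calibrations, though. First, the coefficient of $\partial_q$ in the commutator must come out to be $0$, not a ``$-T_N'N^q$ pattern'', since $-T_N'D_H$ contains no $\partial_q$ term; this cancellation, $\partial_tN^q+\tfrac{\gamma}{2}\partial_q^2N^q+\partial_qN^S=0$, follows already from the explicit forms of $N^q$ and $\phi$ in Eqs.\pref{HJB.defISO} and \pref{HJB.conditiononpphil}, the third determining equation being needed only for the zeroth-order coefficient. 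Second, if you carry the signs through you will find $[\widetilde N,D_H]=+T_N'D_H$, i.e. $[D_H,\widetilde N]=-T_N'D_H$; the paper's own proof computes $[-\widetilde N,D_H]$ and then drops the outer minus sign, but only the fact that the bracket is a multiple of $D_H$ is used in Corollary \ref{ntildesol}, so nothing downstream is affected.

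The one genuine conceptual problem is your treatment of $N^S$. The lemma assumes $N\in\mathcal H_V$, and $\mathcal H_V$ is by definition the set of isovectors with $\partial N^S/\partial S=0$; by Eq.\pref{HJB.conditiononh} this forces $p=0$, hence $N^S=-\phi(t,q)$ is a function of $(t,q)$ alone. The identity to be proved is therefore a pure operator identity on $C^\infty(\mathbb R^2)$: no section map $\theta_\eta$, no substitution $S=-\gamma\ln\eta$, and no $pe^{S/\gamma}$ term occurs anywhere. Your plan to keep that term and kill it ``on the section'' using the PDE for $p$ does not merely add bookkeeping --- it would break the statement. If the zeroth-order part of $\widetilde N$ contained a summand $-p/\eta$ depending on a chosen solution $\eta$, its bracket with $\tfrac{\gamma}{2}\partial_q^2$ would produce a first-order term proportional to $\partial_q(p/\eta)\,\partial_q$, which is not a multiple of $D_H$ and does not vanish in general; the operator would also no longer be independent of $\eta$. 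The hypothesis $N\in\mathcal H_V$ is exactly what excludes this, and the proof should invoke it at the outset rather than try to absorb a term that is not there.
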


\begin{proof}
The commutator splits into nine terms :
\begin{equation}
\begin{split}
-[\widetilde{N},D_H]=[-\widetilde{N}, D_H]=& [N^t \dfrac{\partial}{\partial t} +N^q \dfrac{\partial}{\partial q}+ \dfrac{1}{\gamma}N^S \cdot, \dfrac{\partial }{\partial t }+ \dfrac{\gamma}{2}\dfrac{\partial^2}{\partial q^2}-\dfrac{1}{\gamma}V \cdot ]\\
=& - \dfrac{\partial N^t}{\partial t} \dfrac{\partial }{\partial t}- \dfrac{\gamma}{2}( \dfrac{\partial N^t}{ \partial q} \dfrac{\partial^2}{ \partial q \partial t}+\dfrac{\partial^2}{\partial q^2} \dfrac{\partial}{\partial t}) \\
&- \dfrac{1}{\gamma}N^t \dfrac{ \partial V}{\partial t} \cdot  \dfrac{\partial N^q}{\partial t} \dfrac{\partial}{\partial q} -\dfrac{\gamma}{2}( \dfrac{\partial^2 N^q}{\partial q}\dfrac{\partial}{\partial q}+2 \dfrac{\partial N^q}{\partial q} \dfrac{\partial^2}{\partial q^2})\\
&- \dfrac{1}{\gamma}N^q \dfrac{\partial V}{\partial q} \cdot - \dfrac{1}{\gamma}\dfrac{\partial N^S}{\partial t}\cdot
 - \dfrac{1}{2} ( \dfrac{\partial^2 N^S}{\partial q^2}\cdot+2 \dfrac{\partial N^S}{\partial q}\dfrac{\partial}{\partial q}) +0.
 \end{split}
\end{equation}
As $N^t$ depends only upon $t$ (see Eq(\ref{HJB.defISO})), this reduces to : 
\begin{equation}
\begin{split}
&- \dfrac{\partial N^t}{\partial t} \dfrac{\partial }{\partial t} + \left( -\dfrac{\partial N^q}{\partial t }- \dfrac{\gamma}{2} \dfrac{\partial^2 N^q}{\partial q^2}- \dfrac{\partial N^S}{\partial q} \right) \dfrac{\partial}{\partial q}
- \gamma \dfrac{\partial N^q}{\partial q} \dfrac{\partial^2}{\partial q^2}\\
& +\left(- \dfrac{1}{\gamma}N^t \dfrac{\partial V}{\partial t}- \dfrac{1}{\gamma}N^q \dfrac{\partial V}{\partial q}- \dfrac{1}{\gamma} \dfrac{\partial N^S}{\partial q}- \dfrac{1}{2}\dfrac{\partial^2 N^S}{\partial q^2}\right) \cdot
\end{split}
\end{equation}
From the formulas recalled in \S \ref{HJB}, it appears that
\begin{equation}
\begin{split}
\dfrac{\partial N^t}{\partial t}=& \dfrac{\partial T_N}{\partial t}= T_N'(t),\\
\dfrac{\partial N^q}{\partial t} + \dfrac{\gamma}{2}\dfrac{\partial^2 N^q}{\partial q^2}+\dfrac{\partial N^S}{\partial q}=& \dfrac{1}{2}T''_N(t)q+l'(t)+\dfrac{\gamma}{2}\cdot 0 + \left(-\dfrac{1}{2} T''_N(t)q-l'(t) \right)\\
=&0, \\
\dfrac{\partial N^q}{\partial q}=& \dfrac{1}{2}T_N'(t), \text{ and }\\
-\dfrac{1}{\gamma}N  \dfrac{\partial V}{\partial t}- \dfrac{1}{\gamma}N^q \dfrac{\partial V}{\partial q}
=&-\dfrac{1}{\gamma} \dfrac{\partial N^S}{\partial t}-\dfrac{1}{2}\dfrac{\partial^2 N^S}{\partial q^2}\\
=& -\dfrac{1}{\gamma}T_N \dfrac{\partial V}{\partial t}- \dfrac{1}{\gamma}(\dfrac{1}{2}T_N'(t) q +l(t)) \dfrac{\partial V}{\partial q}+ \dfrac{1}{\gamma} \dfrac{\partial \phi}{\partial t}+\dfrac{1}{2}\dfrac{\partial^2 \phi}{\partial q^2}\\
=& - \dfrac{1}{\gamma} \left(- \dfrac{\partial \phi}{\partial t}+ T_N \dfrac{\partial V}{\partial t}+ (\dfrac{1}{2}T_N' +l) \dfrac{\partial V}{\partial q} - \dfrac{\gamma}{2} \dfrac{\partial^2 \phi }{\partial q^2}+T'_N(t) V \right) \\
&+\dfrac{1}{\gamma}T_N'(t) V\\
=& \dfrac{1}{\gamma}T_N'(t) V , \text{ whence }\\
[\widetilde{N}, D_H] =& -T_N'(t) \dfrac{\partial}{\partial t}-\dfrac{\gamma}{2} T_N'(t) \dfrac{\partial^2}{\partial q^2} + \dfrac{1}{\gamma} T_N'(t) V \cdot\\
=& -T_N'(t) (\dfrac{\partial }{\partial t}+ \dfrac{\gamma}{2} \dfrac{\partial^2}{\partial q^2}- \dfrac{1}{\gamma}V \cdot)\\
=&-T_N'(t)D_H.
\end{split}
\end{equation}

\end{proof}

The claim of Lemma \ref{lemmesol} in fact implies that $\widetilde{N}$ belong to the symmetry algebra of Eq(\ref{HJB.edpeta}). Indeed, 

\begin{corollary}
Let $\eta$ be a solution of Eq(\ref{eqdep}), then, for each $N\in \mathcal{H}_V$, $\widetilde{N}(\eta)$ is also a solution of Eq(\ref{eqdep}).
\label{ntildesol}
\end{corollary}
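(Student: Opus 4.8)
The plan is to deduce the corollary directly from the operator identity of Lemma \ref{lemmesol}. First I would recast Eq(\ref{eqdep}) in terms of $D_H$: dividing $\gamma\dfrac{\partial \eta}{\partial t}=-\dfrac{\gamma^2}{2}\dfrac{\partial^2\eta}{\partial q^2}+V\eta$ by $\gamma$ and rearranging gives $\dfrac{\partial \eta}{\partial t}+\dfrac{\gamma}{2}\dfrac{\partial^2\eta}{\partial q^2}-\dfrac{1}{\gamma}V\eta=0$, which is precisely $D_H\eta=0$, with $D_H$ the operator introduced just before Lemma \ref{lemmesol}. So the statement to prove is: if $D_H\eta=0$, then $D_H\bigl(\widetilde{N}(\eta)\bigr)=0$.

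Next I would rewrite Lemma \ref{lemmesol}, $[\widetilde{N},D_H]=-T_N'D_H$, as the operator identity $D_H\circ\widetilde{N}=\widetilde{N}\circ D_H+T_N'\,D_H$ on smooth functions of $(t,q)$. Here I would note that because $N\in\mathcal{H}_V$ we have $\partial N^S/\partial S=0$, so $\widetilde{N}=-N^t\dfrac{\partial}{\partial t}-N^q\dfrac{\partial}{\partial q}-\dfrac{1}{\gamma}N^S$ indeed sends functions of $(t,q)$ to functions of $(t,q)$, and $T_N'$ is a function of $t$ alone, acting by multiplication. Applying both sides of the identity to $\eta$ and using $D_H\eta=0$, the right-hand side becomes $\widetilde{N}(D_H\eta)+T_N'\,(D_H\eta)=\widetilde{N}(0)+T_N'\cdot 0=0$, since a vector field annihilates the zero function. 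Hence $D_H\bigl(\widetilde{N}(\eta)\bigr)=0$, i.e. $\widetilde{N}(\eta)$ solves Eq(\ref{eqdep}).

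There is essentially no obstacle beyond bookkeeping. The only two points deserving a word of care are: keeping track of the order of composition in the commutator so that the sign in front of $T_N'$ comes out correctly, and observing that $\widetilde{N}(\eta)$ is again a function of the base variables $(t,q)$ only — which is exactly what the defining condition $\partial N^S/\partial S=0$ of $\mathcal{H}_V$ guarantees. Thus the corollary is an immediate algebraic consequence of Lemma \ref{lemmesol}, and no new analytic input is needed.
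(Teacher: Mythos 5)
Your proof is correct and follows essentially the same route as the paper: both rewrite the commutator identity of Lemma \ref{lemmesol} as $D_H\widetilde{N}=\widetilde{N}D_H+T_N'D_H$ and apply it to $\eta$ with $D_H\eta=0$. The only cosmetic remark is that $\widetilde{N}$ is a first-order differential operator plus a zeroth-order (multiplication) term rather than a pure vector field, but since any linear operator annihilates the zero function your conclusion stands unchanged.
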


\begin{proof}
A function $\eta (t,q)$ satisfies Eq(\ref{eqdep}) if and only if $D_H(\eta)=0$, but this implies
\begin{equation}
\begin{split}
D_H(\widetilde{N}(\eta)) =& (D_H \widetilde{N})(\eta)\\
=& \left([D_H, \widetilde{N}] + \widetilde{N} D_H \right)(\eta)\\
=&\left(T'_N(t)D_H+ \widetilde{N}D_H \right)(\eta) \qquad
[\text{by \textit{Lemma \ref{lemmesol}}} ]\\
=& T'_N(t) D_H(\eta)+ \widetilde{N}(D_H(\eta))\\
=& 0. \\
\end{split}
\end{equation}
\end{proof}

The main result of this section is the 

\begin{theorem}
$(\Omega_\eta(N,M))(t,z(t))$, where $z(\cdot)$ solves Eq(\ref{Intro.defz}) is a martingale with respect to the increasing filtration $\mathcal{P}_t$ associated with this stochastic differential equation. \label{martingales}
\end{theorem}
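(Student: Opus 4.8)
The plan is to reduce the martingale property to the statement that $\Omega_\eta(N,M)$, viewed as a function of $(t,q)$, lies in the kernel of the parabolic operator $D_H = \partial_t + \frac{\gamma}{2}\partial_q^2 - \frac{1}{\gamma}V$, together with the standard fact from Itô calculus that if $f(t,q)$ satisfies $D_H f = 0$ and $z(\cdot)$ solves Eq(\ref{Intro.defz}) (whose drift is $\gamma\,\partial_q \ln\eta = -\partial_q S$), then $f(t,z(t))$ is a local martingale for the filtration $\mathcal{P}_t$. Indeed, the generator of $z(\cdot)$ is $D_t = \partial_t + \gamma(\partial_q\ln\eta)\partial_q + \frac{\gamma}{2}\partial_q^2$ (with $\widetilde{B} = \gamma\,\partial_q\ln\eta$ and the second-order coefficient $\frac{\gamma}{2}$ as in Eq(\ref{Intro.generator})), and a direct computation shows $D_t f = \eta^{-1}D_H(\eta f)$; so $D_H(\eta f)=0$ forces $D_t f = 0$ and hence, by Itô's formula, $df(t,z(t)) = \sqrt{\gamma}\,\partial_q f(t,z(t))\,dw(t)$, a local martingale. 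I would first state this bridge lemma (or cite \cite{R2}) so that the rest of the argument is purely about the PDE content of $\Omega_\eta(N,M)$.

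The heart of the matter is therefore to show $D_H\big(\eta\,\Omega_\eta(N,M)\big)=0$, equivalently that $\eta\,\Omega_\eta(N,M)$ is again a solution of Eq(\ref{eqdep}). But Corollary \ref{lemma7} gives us exactly
\begin{equation*}
\Omega_\eta(N,M) = -\gamma\,\frac{\widetilde{[N,M]}(\eta)}{\eta},
\end{equation*}
so $\eta\,\Omega_\eta(N,M) = -\gamma\,\widetilde{[N,M]}(\eta)$. Now $\mathcal{H}_V$ is a Lie subalgebra of $\mathcal{G}_V$ (as recalled just before the definition of $\mathcal{H}_V$, and because the condition $\partial N^S/\partial S = 0$ is preserved under bracket — here one may invoke that $N\mapsto -\widetilde N$ is a Lie algebra morphism by Lemma \ref{HJB.lemmeNtilde} and that the relevant coefficients depend only on $(t,q)$), hence $[N,M]\in\mathcal{H}_V$. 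Applying Corollary \ref{ntildesol} to the isovector $[N,M]\in\mathcal{H}_V$, we conclude that $\widetilde{[N,M]}(\eta)$ is a solution of Eq(\ref{eqdep}), i.e. $D_H\big(\widetilde{[N,M]}(\eta)\big)=0$, and therefore $D_H\big(\eta\,\Omega_\eta(N,M)\big)=0$ as required.

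Combining the two parts: $\Omega_\eta(N,M)$ is a function $f(t,q)$ with $D_H(\eta f)=0$, so $f(t,z(t)) = \Omega_\eta(N,M)(t,z(t))$ is a local martingale with respect to $\mathcal{P}_t$. I expect the only real subtlety to be promoting "local martingale" to "martingale": this needs an integrability estimate on $\partial_q f(t,z(t))$ over the (finite) time interval on which the backward-parabolic solution $\eta$ — and hence the reciprocal diffusion $z(\cdot)$ — is defined. On that finite interval the relevant quantities ($\eta$, its logarithmic derivatives, and the explicit polynomial-in-$(t,q)$ coefficients $N^q, N^t, \ldots$ entering $\Omega$ via Eq(\ref{HJB.defISO})) are smooth, and the Bernstein/reciprocal diffusion has finite moments by construction (cf. \cite{R2}), so a dominated-convergence or uniform-integrability argument closes the gap; this is the step I would flag as requiring care, the algebraic identity $D_H(\eta\,\Omega_\eta(N,M))=0$ being the conceptual core and essentially immediate from Corollaries \ref{lemma7} and \ref{ntildesol}.
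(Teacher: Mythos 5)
Your proposal is correct and follows essentially the same route as the paper: reduce to the identity $D_t f=\eta^{-1}D_H(\eta f)$, apply Corollary \ref{lemma7} to write $\eta\,\Omega_\eta(N,M)=-\gamma\,\widetilde{[N,M]}(\eta)$, and invoke Corollary \ref{ntildesol} for $[N,M]\in\mathcal{H}_V$ to conclude that the generator annihilates $\Omega_\eta(N,M)$. Your closing remark on upgrading the local martingale to a true martingale via integrability on the finite time interval is a point of care that the paper's proof passes over silently, but it does not change the argument.
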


\begin{proof}
The generator of the process can be written :
\begin{equation}
D_t= \dfrac{\partial }{\partial t}+ \widetilde{B}\dfrac{\partial}{\partial q}+\dfrac{\gamma}{2}\dfrac{\partial^2}{\partial q^2}
\end{equation}
($\widetilde{D}$ in \cite{LZ1}, \cite{LZ2}).
It's simple to see that (as $\eta$ is a solution of Eq(\ref{eqdep})) :
\begin{equation}
D_tf(t,q)= \dfrac{1}{\eta}(\dfrac{\partial}{\partial t}-\dfrac{1}{\gamma}H)(f \eta),
\end{equation}
In particular, using \textit{Corollary \ref{defOmega}}, one finds  : \\
\begin{equation}
D_t(\Omega_\eta(N,M))=\dfrac{1}{\eta}(\dfrac{\partial}{\partial t}-\dfrac{1}{\gamma}H)(-\gamma \dfrac{\widetilde{[N,M]}(\eta)}{\eta} \eta).
\end{equation}
As $N$ and $M$ belong to $\mathcal{H}_V$, so, according to \textit{Corollary \ref{ntildesol}} applied to $[N,M]$, $\widetilde{[N,M]}(\eta)$ is also a solution of Eq(\ref{eqdep}). We then  have :
\begin{equation}
D_t(\Omega_\eta(N,M))=0.
\end{equation}
\end{proof}

\section{Parametrization of a one--factor affine model}
\label{aff1}

As general references concerning affine models, we shall use H\'enon's PhD thesis (\cite{Henon}) as well as Leblanc and Scaillet's seminal paper (\cite{LeblancScaillet}).

An \it one--factor affine interest rate model \rm  is characterized by the instantaneous rate $r(t)$, satisfying
the following stochastic differential equation :
\begin{eqnarray}        
dr(t)=\sqrt{\alpha r(t) + \beta} \,\, dw(t) +(\phi - \lambda r(t)) \,\, dt 
\label{parametrisation1}
\end{eqnarray}
under the risk--neutral probability $Q$
($\alpha=0$ corresponds to the so--called Vasicek model, and $\beta=0$ corresponds to the
Cox-Ingersoll-Ross model ; cf. \cite{LeblancScaillet}).

Assuming $\alpha>0$, let us set $$\widetilde{\phi}=_{def}\phi + \displaystyle\frac{\lambda \beta}{\alpha}\,\, ,$$  $$\delta=_{def}\displaystyle\frac{4\widetilde{\phi}}{\alpha}\,\, ,$$ and let us also assume
that $\widetilde{\phi}\geq 0$.

The following two quantities will play an important role :

\begin{eqnarray}
C 
&:=&\frac{\alpha^{2}}{8}(\widetilde{\phi}-\frac{\alpha}{4})(\widetilde{\phi}-\frac{3\alpha}{4}) \nonumber \\
&=&\frac{\alpha^{4}}{128}(\delta-1)(\delta-3) \nonumber
\end{eqnarray}

and

$$
D:=\frac{\lambda^{2}}{8}\,\, .
$$

Let us set $X_{t}=\alpha r(t)+\beta$.
\begin{proposition}
\label{StrongSolu}Let $r_{0}\in \mathbf R$ ;
then the stochastic differential equation
\begin{eqnarray}
dr(t)=\sqrt{\vert \alpha r(t) + \beta \vert} \,\, dw(t) +(\phi - \lambda r(t)) \,\, dt 
\label{parametrisation2}
\end{eqnarray}
has a unique strong solution such that $r(0)=r_{0}$.
Furthermore, in case that $$\alpha r_{0}+\beta \geq 0\,\, ,$$ 
 one has  $\alpha r(t)+\beta \geq 0$ for all $t\geq 0$ ;
in particular, $r(t)$ satisfies Eq(\ref{parametrisation1}).
\end{proposition}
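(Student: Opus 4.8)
The plan is to prove Proposition \ref{StrongSolu} by a combination of Yamada--Watanabe type arguments for pathwise uniqueness and a comparison/localization argument for the invariance of the half-line $\{\alpha r + \beta \geq 0\}$.

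First I would rewrite Eq(\ref{parametrisation2}) in terms of $X_t = \alpha r(t) + \beta$. An application of Itô's formula gives
\begin{equation}
\begin{split}
dX_t = \alpha\, dr(t) = \alpha \sqrt{|X_t|}\, dw(t) + \alpha(\phi - \lambda r(t))\, dt = \alpha \sqrt{|X_t|}\, dw(t) + (\alpha\widetilde{\phi} - \lambda X_t)\, dt,
\end{split}
\end{equation}
using $\alpha(\phi - \lambda r) = \alpha\phi - \lambda(X - \beta) = \alpha\phi + \lambda\beta - \lambda X = \alpha\widetilde\phi - \lambda X$. Thus $X_t$ satisfies an SDE of square-root type with diffusion coefficient $\sigma(x) = \alpha\sqrt{|x|}$ and bounded-slope affine drift $b(x) = \alpha\widetilde\phi - \lambda x$; since $r \mapsto X$ is an affine bijection, existence/uniqueness of a strong solution for $r$ is equivalent to that for $X$.

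Next I would establish pathwise uniqueness for the $X$-equation. The drift $b$ is Lipschitz, so it is harmless; the diffusion coefficient $x \mapsto \alpha\sqrt{|x|}$ is only $\tfrac12$-Hölder, but this is exactly the borderline case covered by the Yamada--Watanabe theorem (a function $\rho$ with $\int_{0+}\rho(u)^{-2}\,du = \infty$, here $\rho(u)=\sqrt u$): if $X$ and $X'$ are two solutions with the same initial condition driven by the same Brownian motion, the standard argument with the sequence of smooth approximations to $|x|$ built from $\int \rho^{-2}$ shows $E|X_t - X'_t| = 0$. Weak existence follows from continuity and linear growth of the coefficients (or by first solving the CIR-type equation on $[0,\infty)$ and reflecting/extending), and Yamada--Watanabe then upgrades weak existence plus pathwise uniqueness to the existence of a unique strong solution with $X(0) = \alpha r_0 + \beta$; translating back gives the unique strong solution $r$ with $r(0) = r_0$.

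Finally, for the invariance statement, assume $X_0 = \alpha r_0 + \beta \geq 0$. On the random time interval before $X$ first hits $0$, $|X_t| = X_t$, so $X$ coincides there with the solution $Y$ of the genuine CIR-type SDE $dY_t = \alpha\sqrt{Y_t}\,dw(t) + (\alpha\widetilde\phi - \lambda Y_t)\,dt$ on $[0,\infty)$, which is known (again by Yamada--Watanabe for uniqueness, and by a comparison theorem or the explicit Bessel-process analysis, using $\widetilde\phi \geq 0$ hence $\alpha\widetilde\phi \geq 0$) to stay nonnegative: zero is never an exit point downward because the drift at $0$ is $\alpha\widetilde\phi \geq 0$. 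A cleaner route is a direct comparison argument: let $\tau = \inf\{t : X_t \leq 0\}$; if $\mathbf{P}(\tau < \infty) > 0$, then on $\{\tau < \infty\}$ one has $X_\tau = 0$ by continuity, and just after $\tau$ the drift is $\alpha\widetilde\phi - \lambda X \geq 0$ for $X$ near $0$, while the local martingale part vanishes like $\sqrt{|X|}$; comparing with the deterministic equation $\dot u = \alpha\widetilde\phi - \lambda u$, $u(\tau) = 0$, which stays $\geq 0$, and invoking the comparison theorem for one-dimensional SDEs (valid here because $\sigma$ is $\tfrac12$-Hölder, the Yamada--Watanabe condition) forces $X_t \geq 0$ for $t \geq \tau$, a contradiction with the definition of $\tau$ unless $X$ never goes strictly negative. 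Hence $\alpha r(t) + \beta = X_t \geq 0$ for all $t \geq 0$, so $\sqrt{|\alpha r(t)+\beta|} = \sqrt{\alpha r(t)+\beta}$ along the path and $r(t)$ solves Eq(\ref{parametrisation1}).

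The main obstacle is the borderline non-Lipschitz diffusion coefficient $\sqrt{|x|}$ at the origin: both pathwise uniqueness and the comparison theorem must be invoked in their Yamada--Watanabe $\tfrac12$-Hölder form rather than in the elementary Lipschitz form, and the invariance of $\{X \geq 0\}$ has to be argued carefully precisely because the coefficient degenerates exactly where one wants to control the sign; the boundary behaviour at $0$ is what the hypothesis $\widetilde\phi \geq 0$ is there to handle.
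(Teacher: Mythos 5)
Your reduction to the $X_t$-equation is exactly the paper's: the published proof stops there and simply cites equation $(1)$, p.~313 of G\"oing-Jaeschke--Yor (and, for $\lambda\neq 0$, H\'enon's Proposition 12.1) for both the strong existence/uniqueness and the preservation of nonnegativity, whereas you reprove the cited facts. Your Yamada--Watanabe argument for pathwise uniqueness (modulus $\rho(u)=\sqrt u$, using $\bigl|\sqrt{|x|}-\sqrt{|y|}\bigr|\le\sqrt{|x-y|}$ and the Lipschitz drift), combined with weak existence from continuity and linear growth of the coefficients, is the standard proof of the result the paper invokes and is fine.

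The one step that does not hold as written is the invariance argument. The comparison theorem for one-dimensional SDEs compares two equations with the \emph{same} diffusion coefficient, so you cannot compare $X$ with the deterministic flow $\dot u=\alpha\widetilde\phi-\lambda u$, whose diffusion coefficient is $0$: for instance $dX=dw$, $\dot u=0$, $X_0=u_0=0$ gives $X_t=w_t$, which does not dominate $u_t=0$. Likewise ``zero is never an exit point downward because the drift at $0$ is $\ge 0$'' is only a heuristic, and your first route leaves open precisely what happens at and after the first hitting time of $0$. The fix is immediate within your own framework: since $\alpha>0$ and $\widetilde\phi\ge 0$, one has $-\lambda z\le \alpha\widetilde\phi-\lambda z$ for all $z$, so compare $X$ with the solution $Z$ of $dZ=\alpha\sqrt{|Z|}\,dw-\lambda Z\,dt$, $Z_0=0$ (same diffusion coefficient, Lipschitz drifts); by pathwise uniqueness $Z\equiv 0$, and the Ikeda--Watanabe comparison theorem yields $X_t\ge Z_t=0$ for all $t$. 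Alternatively, Tanaka's formula applied to $X_t^{-}$, the vanishing of the local time of $X$ at $0$, and Gronwall's lemma give $E[X_t^{-}]=0$. With either repair your proof is complete and is, in substance, the argument behind the reference the paper cites.
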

\begin{proof}

It is easy to see that,
in terms of $X_{t}$, Eq(\ref{parametrisation2}) becomes :

\begin{eqnarray}
dX_{t} 
&=&\alpha dr(t) \nonumber \\
&=&\alpha(\sqrt{\vert X_{t} \vert}dw(t)+(\phi-\lambda\frac{X_{t}-\beta}{\alpha})dt) \nonumber \\
&=&\alpha\sqrt{\vert X_{t} \vert}dw(t)+(\alpha\widetilde{\phi}-\lambda X_{t})dt  \,\, . \nonumber
\end{eqnarray}

We are therefore in the situation of $(1)$, p.313, in \cite{GoingYor}, with $c=\alpha$,
$a=\alpha\widetilde{\phi}$, and $b=-\lambda$ ; the result follows.

In case $\lambda\neq 0$, one may also refer to \cite{Henon}, p.55, Proposition 12.1, with $\sigma=\alpha$, $\kappa=\lambda$ and
$a=\displaystyle\frac{\alpha\widetilde{\phi}}{\lambda}$.
\end{proof}

We shall henceforth assume all of the hypotheses of Proposition \ref{StrongSolu} to be satisfied.
\begin{corollary}
\label{BESQ} One has
$$
\left\{
\begin{array}{lr}
X_{t}=e^{-\lambda t} \, Y(\frac{\alpha^{2}(e^{\lambda t}-1)}{4\lambda}) \,\, \text{for} \,\, \lambda \neq 0 \nonumber \,\, , \text{and} \\
X_{t}=Y(\frac{\alpha^{2}t}{4}) \,\, \text{for} \,\, \lambda = 0 \nonumber \\
\end{array}
\right.
$$
where $Y$ is a $BESQ^{\delta}$(squared Bessel process with parameter 
$\delta$) having initial value $Y_{0}=\alpha r_{0}+\beta$.
\end{corollary}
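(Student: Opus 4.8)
The plan is to reduce Corollary \ref{BESQ} to the standard fact that a squared Bessel process of dimension $\delta$ is, up to deterministic time change and scaling, the unique solution of the affine SDE obtained in Proposition \ref{StrongSolu}. Recall that in the proof of that proposition we showed $X_t=\alpha r(t)+\beta$ satisfies
\begin{eqnarray}
dX_t=\alpha\sqrt{|X_t|}\,dw(t)+(\alpha\widetilde{\phi}-\lambda X_t)\,dt,\qquad X_0=\alpha r_0+\beta,\nonumber
\end{eqnarray}
and that by uniqueness (Proposition \ref{StrongSolu}, and the nonnegativity when $\alpha r_0+\beta\ge 0$) it suffices to exhibit \emph{a} process of the asserted form solving this SDE; the two must then coincide.

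First I would treat the case $\lambda=0$. Here the equation is $dX_t=\alpha\sqrt{X_t}\,dw(t)+\alpha\widetilde{\phi}\,dt$. Starting from a $BESQ^\delta$ process $Y$ with $Y_0=\alpha r_0+\beta$, which by definition satisfies $dY_s=2\sqrt{Y_s}\,d\beta_s+\delta\,ds$, I set $X_t=Y(\alpha^2 t/4)$ and apply the deterministic time-change rule for Itô integrals (the time-changed Brownian motion $s\mapsto \beta_s$ with $s=\alpha^2 t/4$ gives a new Brownian motion after rescaling). A direct computation gives $dX_t=\alpha\sqrt{X_t}\,dw(t)+\delta\frac{\alpha^2}{4}\,dt$, and since $\delta=4\widetilde{\phi}/\alpha$ we have $\delta\alpha^2/4=\alpha\widetilde{\phi}$, matching the SDE exactly. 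Uniqueness then closes this case.

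Next, for $\lambda\neq 0$, I would use the substitution $X_t=e^{-\lambda t}Z_t$ to remove the linear drift term: then $dZ_t=e^{\lambda t}dX_t+\lambda e^{\lambda t}X_t\,dt=\alpha e^{\lambda t}\sqrt{|X_t|}\,dw(t)+\alpha\widetilde{\phi}e^{\lambda t}\,dt=\alpha e^{\lambda t/2}\sqrt{|Z_t|}\,dw(t)+\alpha\widetilde{\phi}e^{\lambda t}\,dt$. This is a $BESQ^\delta$-type equation run along the clock $\tau(t)=\int_0^t \alpha^2 e^{\lambda u}/4\,du=\frac{\alpha^2(e^{\lambda t}-1)}{4\lambda}$: writing $Z_t=Y(\tau(t))$ for a $BESQ^\delta$ process $Y$ with $Y_0=Z_0=X_0=\alpha r_0+\beta$, one checks via the time-change formula that $Z$ solves the displayed equation (the drift $\delta\,d\tau(t)=\frac{\delta\alpha^2}{4}e^{\lambda t}\,dt=\alpha\widetilde{\phi}e^{\lambda t}\,dt$ matches, using $\delta=4\widetilde{\phi}/\alpha$ again, and the diffusion coefficient squares correctly). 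Unwinding gives $X_t=e^{-\lambda t}Y\!\left(\frac{\alpha^2(e^{\lambda t}-1)}{4\lambda}\right)$, and invoking pathwise uniqueness from Proposition \ref{StrongSolu} finishes the proof.

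The only genuine subtlety — the main obstacle, though a mild one — is making the deterministic time-change argument rigorous: one must verify that if $B$ is a Brownian motion and $\tau$ is a $C^1$ strictly increasing deterministic time change with $\tau(0)=0$, then $\int_0^{\tau(t)}g(s)\,dB_s$ equals in law (indeed can be realized as) $\int_0^t g(\tau(u))\sqrt{\tau'(u)}\,d\widetilde{B}_u$ for a suitable Brownian motion $\widetilde{B}$, so that the quadratic variations of the two sides of the SDE agree. This is classical (it is exactly how $BESQ^\delta$ scaling and time-change identities such as those in \cite{GoingYor} are derived), so I would simply cite the relevant identity there; alternatively the whole corollary can be read off directly from the scaling/time-change properties of squared Bessel processes as recorded in \cite{GoingYor}, to which the proof of Proposition \ref{StrongSolu} already appeals.
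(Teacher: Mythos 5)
Your argument is correct, but it is organised differently from the paper's. For $\lambda\neq 0$ the paper simply cites G\"oing-Jaeschke and Yor (p.~314) and gives no computation at all, whereas you actually derive the identity: the substitution $Z_t=e^{\lambda t}X_t$ kills the linear drift, and matching the resulting SDE $dZ_t=\alpha e^{\lambda t/2}\sqrt{|Z_t|}\,dw(t)+\alpha\widetilde{\phi}e^{\lambda t}\,dt$ against a time-changed $BESQ^{\delta}$ forces the clock $\tau(t)=\frac{\alpha^{2}(e^{\lambda t}-1)}{4\lambda}$ (your drift check $\delta\,\tau'(t)=\alpha\widetilde{\phi}e^{\lambda t}$ and the quadratic-variation check are both right). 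This is more informative than the paper's citation, since it explains where that particular clock comes from. For $\lambda=0$ the paper goes in the opposite direction: it rescales $Z_t=\frac{4}{\alpha^{2}}X_t$, observes that $Z$ satisfies $dZ_t=2\sqrt{|Z_t|}\,dw(t)+\delta\,dt$ and hence \emph{is} a $BESQ^{\delta}$ on the nose, and then invokes the scaling property of Bessel processes; you instead start from an abstract $Y$, build a candidate solution, and appeal to uniqueness. One small refinement you should make: pathwise uniqueness compares two solutions driven by the \emph{same} Brownian motion, but your constructed solution is driven by the auxiliary Brownian motion $\widetilde{B}$ coming from $\beta$, not by $w$; what you get directly is therefore uniqueness in law (via Yamada--Watanabe), i.e.\ equality in distribution rather than the pathwise representation asserted. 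To obtain the representation for the given $X$ itself, it is cleaner to run your computation in reverse: define $Y(s):=e^{\lambda\tau^{-1}(s)}X_{\tau^{-1}(s)}$ and check, exactly as in your verification, that $Y$ is a $BESQ^{\delta}$ with $Y_0=\alpha r_0+\beta$ driven by a Brownian motion built from $w$. With that adjustment the proof is complete and self-contained.
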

\begin{proof}In case $\lambda\neq 0$, one applies the result of \cite{GoingYor}, p. 314.
For $\lambda=0$, let
$$
Z_{t}:=\displaystyle\frac{4}{\alpha^{2}}X_{t}\,\, ;
$$
it appears that 
$$
dZ_{t}=2\sqrt{\vert Z_{t} \vert}dw(t)+\delta dt \,\, ,
$$
whence $Z_{t}$ is a $BESQ^{\delta}$--process.
As
$$
X_{t}:=\displaystyle\frac{\alpha^{2}}{4}Z_{t}\,\, ,
$$
the scaling property of Bessel processes 
yields the result.
\end{proof}

\begin{theorem}\label{theo1Aff1}If $\delta\geq 2$ one has, almost surely :
$$
\forall t>0  \,\,\, X_{t}>0\,\, ;
$$
on the other hand, if $\delta<2$, almost surely there is a $t>0$ such that $X_{t}=0$.
\end{theorem}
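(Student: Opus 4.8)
The plan is to transport the statement through Corollary \ref{BESQ} to the squared Bessel process it produces, and then to invoke the classical classification of the boundary point $0$ for $BESQ^\delta$. First I would introduce the deterministic time change of Corollary \ref{BESQ}, writing $c_\lambda(t)=\frac{\alpha^{2}(e^{\lambda t}-1)}{4\lambda}$ for $\lambda\neq 0$ and $c_\lambda(t)=\frac{\alpha^{2}t}{4}$ for $\lambda=0$. In every case $c_\lambda$ is continuous, strictly increasing and satisfies $c_\lambda(0)=0$, so it is a homeomorphism of $(0,\infty)$ onto an interval $(0,\Lambda)$, and an elementary computation gives $\Lambda=+\infty$ if $\lambda\geq 0$ and $\Lambda=\frac{\alpha^{2}}{4\vert\lambda\vert}$ if $\lambda<0$. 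Since the factor $e^{-\lambda t}$ appearing in Corollary \ref{BESQ} is everywhere strictly positive, one has $X_{t}>0\Longleftrightarrow Y(c_\lambda(t))>0$ for each $t>0$, whence
\[
\{\,X_{t}>0\ \text{for all }t>0\,\}=\{\,Y_{s}>0\ \text{for all }s\in(0,\Lambda)\,\},
\]
\[
\{\,\exists\,t>0:\ X_{t}=0\,\}=\{\,\exists\,s\in(0,\Lambda):\ Y_{s}=0\,\},
\]
where $Y$ is a $BESQ^\delta$ started at $Y_{0}=\alpha r_{0}+\beta\geq 0$ and $\delta=\frac{4\widetilde{\phi}}{\alpha}\geq 0$.

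For the first assertion, assume $\delta\geq 2$. The origin is then non-attainable for $BESQ^\delta$: from any starting point $Y_{0}\geq 0$ one has $Y_{s}>0$ for every $s>0$ almost surely (when $\delta=2$, $Y$ is the squared modulus of a planar Brownian motion, which avoids the origin at positive times; when $\delta>2$, the origin is polar). By the first displayed identity this yields $X_{t}>0$ for all $t>0$ almost surely.

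For the converse, assume $0\leq\delta<2$. Then the origin is attainable for $BESQ^\delta$, and from any $Y_{0}\geq 0$ the hitting time $T_{0}=\inf\{s>0:\ Y_{s}=0\}$ is almost surely finite. When $\Lambda=+\infty$ — in particular whenever $\lambda\geq 0$, which is the mean-reverting regime of interest for rate models — one immediately obtains an $s\in(0,\Lambda)$, hence a $t>0$, with $Y_{s}=0$, i.e.\ $X_{t}=0$, almost surely. The two facts about $BESQ^\delta$ invoked here, non-attainability of the origin for $\delta\geq 2$ and attainability for $\delta<2$, are classical and may be quoted from \cite{GoingYor}, or derived from the scale function $s$ of the one-dimensional diffusion $Y$, for which $s(0^{+})$ is finite exactly when $\delta<2$; no substantial computation is needed beyond the elementary analysis of $c_\lambda$. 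The main obstacle I anticipate is the subcase $\lambda<0$, in which the clock $c_\lambda$ is bounded and one must check the sharper statement $T_{0}<\Lambda$ almost surely, not merely $T_{0}<+\infty$; this requires the explicit law of $T_{0}$ for $BESQ^\delta$ (equivalently, an explicit scale/speed computation for $r$), and if that refinement is not at hand the clean dichotomy of the theorem should be understood under the standing hypothesis $\lambda\geq 0$.
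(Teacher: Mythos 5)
Your argument is correct in substance but takes a genuinely different route from the paper. The paper's proof is essentially a citation: it invokes Corollary 1 of \cite{LeblancScaillet} (which states the dichotomy directly for the affine diffusion $X_t$), and alternatively matches parameters with \cite{Henon}, p.~56, to recover the Feller-type criterion $\frac{2a\kappa}{\sigma^2}\geq 1 \Leftrightarrow \delta\geq 2$. You instead push the statement through the representation of Corollary \ref{BESQ} and reduce everything to the classical boundary classification of $BESQ^{\delta}$ at the origin (non-attainability for $\delta\geq 2$, attainability for $\delta<2$, quotable from \cite{GoingYor}). What your route buys is self-containedness within the paper's own toolkit and, more importantly, visibility of a hypothesis that the paper's one-line citation hides: for the second assertion, the relevant event is $\{T_{0}<\Lambda\}$ where $\Lambda$ is the endpoint of the deterministic clock, and $\Lambda$ is finite exactly when $\lambda<0$. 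Your caution there is not excessive prudence but the detection of a real restriction: for $Y_{0}>0$ and $0\leq\delta<2$ the hitting time $T_{0}$ of a $BESQ^{\delta}$ has the law of $Y_{0}/(2G)$ with $G$ a Gamma variable of index $1-\frac{\delta}{2}$, hence full support on $(0,\infty)$, so $P(T_{0}<\Lambda)<1$ whenever $\Lambda<\infty$; the ``almost surely $X$ vanishes'' half of Theorem \ref{theo1Aff1} therefore genuinely requires $\lambda\geq 0$ (or $X_{0}=0$), a hypothesis implicit in the mean-reverting setting of \cite{LeblancScaillet} and \cite{Henon} but not stated in the theorem. The first half of the dichotomy, and your proof of it, are unconditional in $\lambda$.
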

\begin{proof}We apply Corollary 1, p. 317, from \cite{LeblancScaillet}(12.2)  yielding that 
\begin{eqnarray}
\forall t>0, \,\,\, X_{t}>0 \,\, 
\end{eqnarray}
if and only if $\delta\geq 2$.

One may also use \cite{Henon}, p.56, from which follows that $(3.3)$
is equivalent to 
$$\displaystyle\frac{2a\kappa}{\sigma^{2}}\geq 1 \,\, ; $$
 but,
according to the above identifications,
$$
\displaystyle\frac{2a\kappa}{\sigma^{2}}=\displaystyle\frac{2{\displaystyle\frac{\alpha\widetilde{\phi}}{\lambda}}{\lambda}}{\alpha^{2}}
=\displaystyle\frac{2\widetilde{\phi}}{\alpha}=\displaystyle\frac{\delta}{2}\,\, .
$$
\end{proof}

Our main result is the following :
\begin{theorem}
\label{bernstein}
 Let us define the process
$$
z(t)=\sqrt{X_{t}}
$$
and the stopping time
$$
T=\inf\{t>0 \vert X_{t}=0\} ;
$$
as seen in Theorem \ref{theo1Aff1}, $T=+\infty \,\, 
a.s.$ for $\delta\geq 2$,
and 
$T<+\infty \,\, a.s.$ for $\delta<2$.
\, 
Then there exists a Bernstein process $y(t)$
for 
$$
\gamma=\frac{\alpha^2}{4}\,\,
$$
and the potential
$$
V(t,q)=\frac{C}{q^{2}}+Dq^{2}\,\, .
$$
such that
$$
\forall t\in [0,T[,\,\, z(t)=y(t) \,\, .
$$
In particular, for $\delta\geq 2$, $z$ itself is a Bernstein process.
\end{theorem}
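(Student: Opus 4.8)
The plan is to apply It\^o's formula to $z(t)=\sqrt{X_t}$ and identify the resulting drift as a logarithmic gradient $\gamma\,\partial_q\ln\eta$ for a suitable positive solution $\eta$ of Eq(\ref{eqdep}) with $\gamma=\alpha^2/4$ and $V=C/q^2+Dq^2$, which is exactly the defining structure of a Bernstein process in Eq(\ref{Intro.defz}). First I would differentiate $z=\sqrt{X}$ using the SDE $dX_t=\alpha\sqrt{|X_t|}\,dw+(\alpha\widetilde{\phi}-\lambda X_t)\,dt$ obtained in the proof of Proposition \ref{StrongSolu}; on the interval $[0,T[$ we have $X_t>0$, so the modulus disappears and It\^o gives $dz(t)=\tfrac{1}{2\sqrt{X_t}}\,dX_t-\tfrac{1}{8}X_t^{-3/2}\,d\langle X\rangle_t$. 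Substituting $d\langle X\rangle_t=\alpha^2 X_t\,dt$ and $\sqrt{X_t}=z(t)$, this collapses to $dz(t)=\tfrac{\alpha}{2}\,dw(t)+\big(\tfrac{\alpha\widetilde{\phi}}{2z}-\tfrac{\lambda}{2}z-\tfrac{\alpha^2}{8z}\big)\,dt$. The diffusion coefficient $\alpha/2$ matches $\sqrt{\gamma}$ with $\gamma=\alpha^2/4$, and it remains to exhibit the drift as $\gamma\,\partial_q\ln\eta(t,z(t))$.

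The second step is to solve the backward heat-type equation Eq(\ref{eqdep}) for this particular $V$, or at least to find one positive solution $\eta$ whose logarithmic derivative reproduces the drift. Writing $\widetilde{B}(t,q)=\tfrac{\alpha\widetilde{\phi}}{2q}-\tfrac{\lambda}{2}q-\tfrac{\alpha^2}{8q}=\gamma\,\partial_q\ln\eta$ means $S=-\gamma\ln\eta$ must satisfy $-\partial_q S=\widetilde{B}$, so $S(t,q)=-a(t)\ln q+b(t)q^2+(\text{function of }t)$ for appropriate coefficients; plugging this ansatz into HJB Eq(\ref{HJB.EDPS}) with $V=C/q^2+Dq^2$ fixes the time-dependence (a Riccati equation for the $q^2$-coefficient coming from the $\lambda$ term, and algebraic matching of the $1/q^2$ and $\log$ terms, which is precisely where the constants $C=\tfrac{\alpha^4}{128}(\delta-1)(\delta-3)$ and $D=\lambda^2/8$ enter). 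Equivalently, one can bypass HJB and check directly that the explicit $\eta$ built from the squared-Bessel transition density of Corollary \ref{BESQ} --- pulled back through $q\mapsto q^2$ and the deterministic time-change --- is the sought solution; the known link between $BESQ^\delta$ and the radial part of the harmonic oscillator, together with the $h$-transform structure, makes this plausible without heavy computation. I would state this identification as the content of the proof and relegate the coefficient-matching to a short display.

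The third and final step is to invoke the existence theory for Bernstein (reciprocal) diffusions: given the positive solution $\eta$ of Eq(\ref{eqdep}) on $[0,T[$ constructed above, the associated diffusion with generator $D_t$ as in the proof of Theorem \ref{martingales} is by definition a Bernstein process $y(t)$ for $(\gamma,V)$, and by construction its drift and diffusion coefficients coincide on $[0,T[$ with those of the SDE satisfied by $z(t)$; pathwise uniqueness for Eq(\ref{parametrisation2}) from Proposition \ref{StrongSolu} (or equivalently for the one-dimensional SDE satisfied by $z$ on any compact subinterval of $[0,T[$, using that the drift is locally Lipschitz away from $q=0$ and $z$ stays strictly positive there) then forces $z(t)=y(t)$ for all $t\in[0,T[$. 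For $\delta\geq 2$ we have $T=+\infty$ a.s.\ by Theorem \ref{theo1Aff1}, so $z$ itself is globally a Bernstein process.

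The main obstacle is the middle step: producing the correct positive solution $\eta$ of the backward parabolic equation Eq(\ref{eqdep}) with the singular-plus-quadratic potential and verifying that $\gamma\,\partial_q\ln\eta$ equals the It\^o drift of $\sqrt{X_t}$ --- this is where the precise definitions of $C$ and $D$ in terms of $\alpha,\lambda,\delta$ are forced, and where the $(\delta-1)(\delta-3)$ structure (reflecting the two admissible behaviours $q^{\delta/2-1/2}$ and $q^{3/2-\delta/2}$ of solutions near $q=0$) must be reconciled with the Bessel index. A secondary subtlety is the behaviour at the boundary $q=0$ when $\delta<2$: one must work on the open interval $[0,T[$ and argue that the Bernstein process is only defined up to the first hitting time of the origin, which is exactly the stopping time $T$, so the claimed equality is only asserted on $[0,T[$.
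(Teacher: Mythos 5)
Your proposal follows essentially the same route as the paper: apply It\^o's formula to $z=\sqrt{X_t}$ to obtain the drift $\frac{1}{8z}(4\alpha\widetilde{\phi}-4\lambda z^2-\alpha^2)$, then exhibit a positive solution $\eta$ of Eq(\ref{eqdep}) (the paper writes it explicitly as $\eta(t,q)=e^{\lambda\delta t/4-\lambda q^2/\alpha^2}q^{(\delta-1)/2}$, which is exactly what your ansatz $S=-a(t)\ln q+b(t)q^2+c(t)$ produces) whose logarithmic derivative reproduces that drift, with $\gamma=\alpha^2/4$. The argument is correct; the only difference is that you describe the construction of $\eta$ procedurally where the paper simply displays it and verifies.
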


\begin{proof}
One has (cf. Proposition (\ref{StrongSolu}) and its proof)

\begin{eqnarray}
dX_{t} \nonumber
&=&\alpha\sqrt{X_{t}}dw(t)+(\alpha\widetilde{\phi}-\lambda X_{t})dt \nonumber \\
&=&\alpha z(t)dw(t)+(\alpha\widetilde{\phi}-\lambda z(t)^{2})dt \,\, .\nonumber
\end{eqnarray}

Taking now  $f(x)=\sqrt{x}$, we have $$\forall x>0\,\, f^{'}(x)=\frac{1}{2\sqrt{x}}\,\,\text{and}
\,\,f^{''}(x)=-\frac{1}{4}x^{-\frac{3}{2}}\,\, ,$$ therefore, for all $t\in ]0,T[$,
$f^{'}(X_{t})=\displaystyle\frac{1}{2z(t)}$
and $f^{''}(X_{t})=-\frac{1}{4}z(t)^{-3}$.
The application of It\^o's formula now gives :

\begin{eqnarray}
dz(t) \nonumber
&=&d(f(X_{t})) \nonumber \\
&=&f^{'}(X_{t})dX_{t}+\frac{1}{2}f^{''}(X_{t})(dX_{t})^{2} \nonumber \\
&=&\frac{1}{2z(t)}(\alpha z(t)dw(t)+(\alpha\widetilde{\phi}-\lambda z(t)^{2})dt)
-\frac{1}{8}z(t)^{-3}\alpha^{2}z(t)^{2}dt \nonumber \\
&=&\displaystyle\frac{\alpha}{2}dw(t)+\frac{1}{8z(t)}(4\alpha\widetilde{\phi}-4\lambda z(t)^{2}-\alpha^{2})dt \,\, .\nonumber 
\end{eqnarray}

Let us now define $\eta$ by
\begin{eqnarray}
\eta(t,q) 
&:=&\exp({\displaystyle\frac{\lambda\widetilde{\phi}t}{\alpha}-\displaystyle\frac{\lambda q^{2}}{\alpha^{2}}})
q^{\displaystyle\frac{2\widetilde{\phi}}{\alpha}-\displaystyle\frac{1}{2}}
\,\, \nonumber \\
&=&\exp({\displaystyle\frac{\lambda \delta t}{4}-\displaystyle\frac{\lambda q^{2}}{\alpha^{2}}})
q^{\displaystyle\frac{\delta - 1}{2}} \nonumber \,\, .
\end{eqnarray}

It is easy to check that $\eta$ solves the equation
$$
\gamma\displaystyle\frac{\partial \eta}{\partial t}=-\displaystyle\frac{\gamma^2}{2}
\displaystyle\frac{\partial^{2}\eta}{\partial q^{2}}+V\eta  
$$
for
$$
V=\frac{C}{q^{2}}+Dq^{2}\,\, ;
$$
in other words,
\begin{eqnarray}
S \nonumber
&:=&-\gamma\ln(\eta) \nonumber \\
&=&-\gamma(\displaystyle\frac{\lambda \delta t}{4}-\displaystyle\frac{\lambda q^{2}}{\alpha^{2}}
+\displaystyle\frac{\delta - 1}{2}\ln(q)) \nonumber \,\, \\
&=&-\displaystyle\frac{\alpha^{2}\lambda \delta t}{16}+\displaystyle\frac{\lambda q^{2}}{4}
-\alpha^{2}(\frac{\delta - 1}{8})\ln(q) \nonumber \,\, 
\end{eqnarray}
satisfies $(\mathcal H\mathcal J\mathcal B^{V})$.
Furthermore we 
have :

\begin{eqnarray}
\widetilde{B} 
&:=&\gamma\displaystyle\frac{\displaystyle\frac{\partial \eta}{\partial q}}{\eta} \nonumber \\
&=&-\displaystyle\frac{\partial S}{\partial q} \nonumber \\ 
&=&-\displaystyle\frac{\lambda q}{2}+\displaystyle\frac{\alpha^{2}(\delta - 1)}{8q} \nonumber \\
&=& \displaystyle\frac{1}{8q}(\alpha^{2}\delta-\alpha^{2}-4\lambda q^{2}) \nonumber
\end{eqnarray}
whence
$$
\widetilde{B}(t,z(t))=\displaystyle\frac{1}{8z(t)}(4\alpha\widetilde{\phi}-\alpha^{2}-4\lambda z(t)^{2})
$$
and $z$ satisfies the stochastic differential equation associated with $\eta$ :
$$
\forall t\in ]0,T[ \,\,\, dz(t)=\dfrac{\alpha}{2} dw(t)+\widetilde{B}(t,z(t))dt
$$
(as in \cite{LZ1}, \S 1, equation ($\mathcal{B}_1)$) ; the result follows.
\end{proof}

\section{Isovectors in the case $V(t,q)=\frac{C}{q^2}+Dq^2$}
\label{Isovectors}
In this section, we are going to consider a class of potentials $V=\frac{C}{q^2}+Dq^2$  for various $C$ and $D$. For $D=0$, $C\neq 0$ this is the case of a radial free quantum particule. When $C \neq 0$, and $D>0$, this is the potential of a radial harmonic oscillator. When $C \neq 0$, and $D<0$, we have a radial repulsive oscillator. Those cases have been studied in the context of symmetries for Schrödinger (and Heat) equations, in particular by Willard Miller Jr.. The methods used here are different and, of course, our main priority is probabilistic : how to solve some stochastic differential equations.

\textit{Theorem (\ref{bernstein})} motivates us to find the explicit form of the isovectors for the potential :
\begin{equation}
V(t,q)=\frac{C}{q^2}+Dq^2.
\end{equation}\\
 Let us call $\mathcal{H}_{(C,D)}=\mathcal{H}_V$ and $\widetilde{\mathcal{H}}_{(C,D)}=\widetilde{\mathcal{H}}_V$.
 
We have to solve :
\begin{equation}
\begin{split}
& q^2[-\frac{1}{4}T_N'''+2D T'_N]+q[-l''+2Dl]+\sigma'-\frac{\gamma}{4}T_N'''-\frac{2Cl}{q^3}=0.
\label{eqiso}
\end{split}
\end{equation}
 
Using the fact that $T_N$, $l$ and $\sigma$ are independent from $q$, Eq(\ref{eqiso}) is equivalent to :

 \begin{equation}
 \begin{cases}
2Cl =0 \\
l''=2Dl\\
\sigma'=\frac{\gamma}{4}T_N''\\
T_N'''=8DT_N'.
\end{cases}
\label{sys}
\end{equation}

\begin{enumerate}[1)]
\item $C \neq 0$. \\
Eq(\ref{sys}) gives $l=0$, and becomes
 \begin{equation}\begin{cases}
\sigma'=\frac{\gamma}{4}T_N''\\
T_N'''=8DT_N'.
\end{cases}\end{equation}

\begin{enumerate}[a)]
\item $D>0$.\\
We define $\varepsilon=\sqrt{8D}$ and find :
\begin{equation}
\begin{split}
T_N'&=C_1 e^{\varepsilon t}+C_2 e^{-\varepsilon t},\\
T_N&=\frac{C_1}{\varepsilon} e^{\varepsilon t}-\frac{C_2}{\varepsilon} e^{-\varepsilon t}+C_3,\\
T_N''&=\varepsilon(C_1 e^{\varepsilon t}-C_2 e^{-\varepsilon t}),
\end{split}
\end{equation}

\begin{equation}
\begin{split}
\sigma&=\frac{\gamma}{4}T_N'+C_4\\
&=\frac{\gamma C_1}{4} e^{\varepsilon t}+\frac{\gamma C_2}{4} e^{-\varepsilon t}+C_4.
\end{split}
\end{equation}

By replacing in Eq(\ref{HJB.defISO}) (and considering that $N^S$ is independent from $S$) :

\begin{equation}
\begin{split}
&N^q=\frac{1}{2}q(C_1 e^{\varepsilon t}+C_2 e^{-\varepsilon t}),\\
&N^t=\frac{C_1}{\varepsilon} e^{\varepsilon t}-\frac{C_2}{\varepsilon} e^{-\varepsilon t}+C_3,\\
&N^S=-\phi. \label{mira2}
\end{split}
\end{equation}

Using :

\begin{equation}
\begin{split}
\phi(t,q)&=\frac{1}{4}T_N''q^2+l'q-\sigma(t)\\
&=\frac{1}{4}(\varepsilon C_1 e^{\varepsilon t}-\varepsilon C_2 e^{-\varepsilon t})q^2-\frac{\gamma C_1}{4} e^{\varepsilon t}-\frac{\gamma C_2}{4} e^{-\varepsilon t}-C_4,
\end{split}
\end{equation}

we find :

\begin{equation}
\begin{split}
\frac{\partial \phi}{\partial q}(t,q)&=\frac{\varepsilon}{2}( C_1 e^{\varepsilon t}-C_2 e^{-\varepsilon t})q,\\
\frac{\partial \phi}{\partial t}(t,q)&=\frac{\varepsilon^2 q^2-\varepsilon \gamma}{4} C_1 e^{\varepsilon t}+\frac{\varepsilon^2 q^2+\varepsilon \gamma}{4} C_2 e^{-\varepsilon t}.
\end{split}
\end{equation}

That gives
\begin{equation}
\begin{split}
&N^q=\frac{1}{2}q(C_1 e^{\varepsilon t}+C_2 e^{-\varepsilon t}),\\
&N^t=\frac{C_1}{\varepsilon} e^{\varepsilon t}-\frac{C_2}{\varepsilon} e^{-\varepsilon t}+C_3,\\
&N^S=-\frac{1}{4}(\varepsilon C_1 e^{\varepsilon t}-\varepsilon C_2 e^{-\varepsilon t})q^2+\frac{\gamma C_1}{4} e^{\varepsilon t}+\frac{\gamma C_2}{4} e^{-\varepsilon t}+C_4.
\end{split}
\end{equation}

Let's define :
\begin{equation}
\widetilde{P}_i=-N^q \frac{\partial}{\partial q}-N^t \frac{\partial}{\partial t}-\dfrac{1}{\gamma} N^S ,
\end{equation}
for $C_i=1$ and $C_j=0$ if $j\neq i$.

We then obtain :

\begin{equation}
\begin{split}
\widetilde{P}_1&=-\frac{1}{2}q e^{\varepsilon t} \frac{\partial}{\partial q}-\frac{1}{\varepsilon} e^{\varepsilon t} \frac{\partial}{\partial t}-\frac{e^{\varepsilon t}}{4\gamma} (-q^2 \varepsilon+\gamma),\\
\widetilde{P}_2&=-\frac{1}{2}q e^{-\varepsilon t} \frac{\partial}{\partial q}+\frac{1}{\varepsilon} e^{-\varepsilon t} \frac{\partial}{\partial t}-\frac{e^{-\varepsilon t}}{4 \gamma}(q^2 \varepsilon+\gamma),  \\
\widetilde{P}_3&=-\frac{\partial}{\partial t},\\
\widetilde{P}_4&=-\frac{1}{\gamma}.
\end{split}
\end{equation}

\item $D=0$.

Using $T_N'''=0$ in Eq(\ref{eqiso}) :
\begin{equation}
\begin{split}
T_N&=C_1t^2+C_2t+C_3,\\
T_N'&=2C_1t+C_2,\\
T_N''&=2C_1.\\
\end{split}
\end{equation}

That gives :

\begin{equation}
\begin{split}
\sigma&=\frac{\gamma}{4}T_N'+\frac{\gamma}{4}C_2+C_4'\\
&=\frac{\gamma C_1}{2}t+\frac{\gamma}{4}C_2+C_4'.
\end{split}
\end{equation}
Let's define $C_4=\frac{\gamma}{4}C_2+C_4'$.

Using :
\begin{equation}
\begin{split}
\phi(t,q)&=\frac{1}{4}T_N''q^2+l'q-\sigma(t)\\
&=\frac{1}{2}C_1q^2-\frac{\gamma C_1t}{2}-C_4,
\end{split}
\end{equation}

we find :
\begin{equation}
\begin{split}
\frac{\partial \phi}{\partial q}(t,q)&=C_1 q,\\
\frac{\partial \phi}{\partial t}(t,q)&=-\frac{\gamma C_1}{2}.
\end{split}
\end{equation}

That gives :

\begin{equation}
\begin{split}
&N^q=C_1qt+\frac{C_2q}{2},\\
&N^t=C_1t^2+C_2t+C_3,\\
&N^S=\frac{C_1}{2}(\gamma t-q^2)+C_4.
\end{split}
\end{equation}

We obtain :

\begin{equation}
\begin{split}
\widetilde{P}_1&=-tq \frac{\partial}{\partial q}-t^2\frac{\partial}{\partial t}-\frac{\gamma t-q^2}{2\gamma},\\
\widetilde{P}_2&=-\frac{q}{2}\frac{\partial}{\partial q}-t \frac{\partial}{\partial t},\\
\widetilde{P}_3&=-\frac{\partial}{\partial t},\\
\widetilde{P}_4&=-\frac{1}{\gamma}.
\end{split}
\end{equation}

In the following those will be named $\widetilde{M}_1$, $\widetilde{M}_2$, $\widetilde{M}_3$ et $\widetilde{M}_4$.

\item $D<0$.

Let's define $\varepsilon=\sqrt{-8D}$

Then we have :

\begin{equation}
\begin{split}
T_N&=\frac{1}{\varepsilon}C_1 \sin (\varepsilon t)-\frac{1}{\varepsilon}C_2 \cos (\varepsilon t)+C_3,\\
T_N'&=C_1 \cos(\varepsilon t) +C_2 \sin (\varepsilon t),\\
T_N''&=-\varepsilon C_1 \sin(\varepsilon t) + \varepsilon C_2 \cos (\varepsilon t),\\
\sigma &= \frac{\gamma}{4}C_1 \cos (\varepsilon t)+\frac{\gamma}{4}C_2 \sin (\varepsilon t)+C_4.
\end{split}
\end{equation}

That gives :

\begin{equation}
\begin{split}
N^t &=\frac{1}{\varepsilon}C_1 \sin (\varepsilon t)-\frac{1}{\varepsilon}C_2 \cos (\varepsilon t)+C_3,\\
N^q &=\frac{q}{2}(C_1 \cos(\varepsilon t) +C_2 \sin (\varepsilon t)),\\
N^S &=-\frac{q^2}{4}\varepsilon (-C_1 \sin(\varepsilon t) +C_2 \cos (\varepsilon t)) + \frac{\gamma }{4}(C_1 \cos (\varepsilon t) +C_2 \sin (\varepsilon t))+C_4.
\end{split}
\end{equation}

Then we obtain :

\begin{equation}
\begin{split}
\widetilde{P}_1&=\frac{-q}{2}\cos(\varepsilon t)\frac{\partial}{\partial q}-\frac{1}{\varepsilon }\sin(\varepsilon t)\frac{\partial}{\partial t}-\frac{1}{\gamma}(\frac{1}{4}q^2\varepsilon \sin(\varepsilon t)+\frac{\gamma}{4}\cos(\varepsilon t)), \\
\widetilde{P}_2&=\frac{-q}{2}\sin(\varepsilon t)\frac{\partial}{\partial q}+\frac{1}{\varepsilon }\cos(\varepsilon t)\frac{\partial}{\partial t}-\frac{1}{\gamma}(-\frac{1}{4}q^2\varepsilon \cos(\varepsilon t)+\frac{\gamma}{4}\sin(\varepsilon t)), \\
\widetilde{P}_3&=-\frac{\partial}{\partial t},\\
\widetilde{P}_4&=-\frac{1}{\gamma}.
\end{split}
\end{equation}

\end{enumerate}

\item $C=0$.\\
Eq(\ref{sys}) becomes

\begin{equation}\begin{cases}
l'' =2Dl \\
\sigma'=\frac{\gamma}{4}T_N''\\
T_N'''=8DT_N'.
\end{cases}\end{equation}

\begin{enumerate}
\item $D>0$.\\
$T_N$ and its derivatives remain the same.\\
\begin{equation}
\begin{split}
l&=C_5e^{\frac{\varepsilon}{2} t}+C_6e^{-\frac{\varepsilon}{2} t},\\
l'&=\frac{\varepsilon}{2} C_5e^{\frac{\varepsilon}{2} t}-\frac{\varepsilon}{2} C_6e^{-\frac{\varepsilon}{2} t}.
\end{split}
\end{equation}

We obtain
\begin{equation}
\begin{split}
&N^q=\frac{1}{2}q(C_1 e^{\varepsilon t}+C_2 e^{-\varepsilon t})+C_5e^{\frac{\varepsilon}{2} t}+C_6e^{-\frac{\varepsilon}{2} t},\\
&N^t=\frac{C_1}{\varepsilon} e^{\varepsilon t}-\frac{C_2}{\varepsilon} e^{-\varepsilon t}+C_3,\\
&N^S=-\frac{1}{4}(\varepsilon C_1 e^{\varepsilon t}-\varepsilon C_2 e^{-\varepsilon t})q^2+\frac{\gamma C_1}{4} e^{\varepsilon t}+\frac{\gamma C_2}{4} e^{-\varepsilon t}+C_4-q\frac{\varepsilon}{2} C_5e^{\frac{\varepsilon}{2} t}+q\frac{\varepsilon}{2} C_6e^{-\frac{\varepsilon}{2} t}.
\end{split}
\end{equation}

The $\widetilde{P}_i$ remain the same for $i=1,2,3,4$ and we have :

\begin{equation}
\begin{split}
\widetilde{P}_5&=-e^{\frac{\varepsilon}{2} t} \frac{\partial}{\partial q}+\frac{1}{\gamma}q \frac{\varepsilon}{2} e^{\frac{\varepsilon}{2} t},\\
\widetilde{P}_6&=-e^{-\frac{\varepsilon}{2} t} \frac{\partial}{\partial q}-\frac{1}{\gamma}q \frac{\varepsilon}{2} e^{-\frac{\varepsilon}{2} t}.\\
\end{split}
\end{equation}

\item $D=0$.\\
$T_N$ and its derivatives remain the same.
\begin{equation}
\begin{split}
l &=C_5t+C_6,\\
l' &=C_5,
\end{split}
\end{equation}

\begin{equation}
\begin{split}
&N^q=C_1qt+\frac{C_2q}{2}+C_5t+C_6,\\
&N^t=C_1t^2+C_2t+C_3,\\
&N^S=\frac{C_1}{2}(\gamma t-q^2)+C_4-qC_5.
\end{split}
\end{equation}

The $\widetilde{P}_i$ remain the same for $i=1,2,3,4$ and we have :

\begin{equation}
\begin{split}
\widetilde{P}_5&=-t\frac{\partial}{\partial q}+\frac{1}{\gamma}q,\\
\widetilde{P}_6&=-\frac{\partial}{\partial q}.
\end{split}
\end{equation}

In the following those will be named $\widetilde{M}_5$ et $\widetilde{M}_6$.

\begin{remark}
\cite{LZ1} already gave those without the mapping from $N$ to $\widetilde{N}$ and had :
\begin{equation}
\begin{split}
&N_1=\frac{\partial}{\partial t}, \qquad N_2=\frac{\partial}{\partial q}, \qquad N_3=-\gamma \frac{\partial}{\partial S} \qquad N_4=2t \frac{\partial}{\partial t} +q \frac{\partial}{\partial q}- 2E \frac{\partial}{\partial E} -B \frac{\partial}{\partial B},\\
&N_5=-2t \frac{\partial}{\partial q} +2q \frac{\partial}{\partial S} +2B \frac{\partial}{\partial E} -2 \frac{\partial}{\partial B}, \\
&N_6=2t^2 \frac{\partial}{\partial t} +2qt \frac{\partial}{\partial q}+(\gamma t-q^2) \frac{\partial}{\partial S}-(2qB+4tE+\gamma)\frac{\partial}{\partial E}+2(q-tB)\frac{\partial}{\partial B}.
\end{split}
\end{equation}
That gives after mapping from $N$ to $\widetilde{N}$ :
\begin{equation}
\begin{split}
\label{ntilde}
&\widetilde{N}_1=-\frac{\partial}{\partial t}=\widetilde{M}_3, \qquad \widetilde{N}_2=-\frac{\partial}{\partial q}=\widetilde{M}_6, \qquad \widetilde{N}_3=1=-\gamma \widetilde{M}_4, \\
& \widetilde{N}_4=-2t\frac{\partial}{\partial t}-q\frac{\partial}{\partial q}=2\widetilde{M}_2,\\
&\widetilde{N}_5=2t \frac{\partial}{\partial q}-\frac{2}{\gamma}q=-2\widetilde{M}_5,\\
&\widetilde{N}_6=-2t^2 \frac{\partial}{\partial t}-2qt\frac{\partial}{\partial q}-\frac{(\gamma t-q^2)}{\gamma}=2\widetilde{M}_1.
\end{split}
\end{equation}
\end{remark}

\item $D<0$.\\
Let's define $\frac{\varepsilon}{2} =\sqrt{-2D}$.\\
$T_N$ and its derivatives remain the same.
\begin{equation}
\begin{split}
l &=C_5\cos(\frac{\varepsilon}{2} t)+C_6\sin (\frac{\varepsilon}{2} t),\\
l' &=-\frac{\varepsilon}{2} C_5\sin(\frac{\varepsilon}{2} t)+\frac{\varepsilon}{2} C_6\cos (\frac{\varepsilon}{2} t).
\end{split}
\end{equation}

\begin{equation}
\begin{split}
N^t &=\frac{1}{\varepsilon}C_1 \sin (\varepsilon t)-\frac{1}{\varepsilon}C_2 \cos (\varepsilon t)+C_3\\
N^q &=\frac{q}{2}(C_1 \cos(\varepsilon t) +C_2 \sin (\varepsilon t))+C_5\cos(\frac{\varepsilon}{2} t)+C_6\sin (\frac{\varepsilon}{2} t)\\
N^S &=-\frac{q^2}{4}\varepsilon (-C_1 \sin(\varepsilon t) +C_2 \cos (\varepsilon t)) + \frac{\gamma }{4}(C_1 \cos (\varepsilon t) +C_2 \sin (\varepsilon t)+C_4)\\
&\qquad +q\frac{\varepsilon}{2} C_5\sin(\frac{\varepsilon}{2} t)-q\frac{\varepsilon}{2} C_6\cos (\frac{\varepsilon}{2} t).
\end{split}
\end{equation}

The $\widetilde{P}_i$ remain the same for $i=1,2,3,4$ and we have :

\begin{equation}
\begin{split}
\widetilde{P}_5&=-\cos(\frac{\varepsilon}{2} t)\frac{\partial}{\partial q}-\frac{1}{\gamma}q\frac{\varepsilon}{2} \sin (\frac{\varepsilon}{2} t),\\
\widetilde{P}_6&=-\sin(\frac{\varepsilon}{2} t)\frac{\partial}{\partial q}+\frac{1}{\gamma}q\frac{\varepsilon}{2} \cos (\frac{\varepsilon}{2} t).\\
\end{split}
\end{equation}

\end{enumerate}
\end{enumerate}

We call ${\mathcal{P}}_{D}$ the space generated by $\{P_1,..., {P_4}\}$.\\
Then we have : 
\begin{theorem}
\begin{enumerate}
\item $\forall C \neq 0$, $\forall D$,  $\mathcal{H}_{(C,D)}=  {\mathcal{P}}_{D}$ ; in particular for $C\neq 0$, the algebra $\mathcal{H}_{(C,D)}$ does not depend on $C$.
\item $\dim {\mathcal{P}}_{D}=4$.
\item $\dim {\mathcal{H}}_{(0,D)}=6$.
\item ${\mathcal{P}}_{D}$ is a Lie sub algebra of $\mathcal{H}_{(0,D)}$.
\item $\forall C$, $\forall D$, $\mathcal{H}_{(C,D)} \subseteq \mathcal{H}_{(0,D)}  $ ( that had been proved in \cite{LZ2}, p.220 for $C>0$ et $D=0$).
\end{enumerate}
\end{theorem}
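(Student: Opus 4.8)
The plan is to read off everything directly from the explicit lists of isovectors computed in Section \ref{Isovectors}, case by case in $D$, and to verify the Lie-algebra assertions by the morphism $N\mapsto-\widetilde N$ of Lemma \ref{HJB.lemmeNtilde} so that bracket computations can be carried out on the $\widetilde P_i$ rather than on the full five-component fields. For part (1), I would argue that in each of the three subcases $D>0$, $D=0$, $D<0$ with $C\neq 0$, equation \eqref{sys} forces $l\equiv0$ (this is exactly the role of the condition $2Cl=0$), so the general solution of \eqref{sys} is parametrised by the four constants $C_1,\dots,C_4$ only; comparing with the $C=0$ computation shows that these four constants produce precisely $\widetilde P_1,\dots,\widetilde P_4$, and since these expressions do not involve $C$, the algebra $\mathcal H_{(C,D)}$ is independent of $C$. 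Part (2) then follows because $\widetilde P_1,\dots,\widetilde P_4$ are visibly linearly independent as differential operators on $\mathbb R^2$ (look at the distinct $t$-dependences $e^{\pm\varepsilon t},1$, or $t^2,t,1$, etc.), so $\dim\mathcal P_D=4$; part (3) is the same bookkeeping with the two extra constants $C_5,C_6$ surviving when $C=0$, giving $\widetilde P_5,\widetilde P_6$ and hence $\dim\mathcal H_{(0,D)}=6$.

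For part (4), I would compute the brackets $[\widetilde P_i,\widetilde P_j]$ for $1\le i,j\le4$ in each $D$-case and check that the result is again a linear combination of $\widetilde P_1,\dots,\widetilde P_4$; by Lemma \ref{HJB.lemmeNtilde} this is equivalent to closure of $\mathcal P_D$ under the bracket on $\mathcal G_V$. Since $\mathcal P_D=\mathcal H_{(C,D)}$ for any $C\neq0$ by part (1), and since $\mathcal H_V$ is a subalgebra of $\mathcal G_V$ (mentioned after Lemma \ref{HJB.lemmeNtilde}), this closure is in fact automatic once part (1) is in hand — $\mathcal P_D$ is a Lie subalgebra because it equals a known Lie algebra. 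The inclusion $\mathcal P_D\subseteq\mathcal H_{(0,D)}$ needed for (4) is immediate from the explicit lists: the generators $\widetilde P_1,\dots,\widetilde P_4$ appearing for $C=0$ are literally the same operators as for $C\neq0$.

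Finally part (5) is the statement $\mathcal H_{(C,D)}\subseteq\mathcal H_{(0,D)}$ for all $C$ and $D$. For $C\neq0$ this reads $\mathcal P_D\subseteq\mathcal H_{(0,D)}$, already established; for $C=0$ it is the trivial inclusion $\mathcal H_{(0,D)}\subseteq\mathcal H_{(0,D)}$. So (5) follows by combining (1) and (4). I expect the only genuinely laborious step to be the explicit bracket computations behind (4) in the $D\neq0$ trigonometric/exponential cases — but as noted these can be bypassed if one is willing to invoke that $\mathcal H_V$ is a subalgebra, reducing the whole theorem to careful inspection of the isovector lists and a linear-independence count. The main conceptual point, and the one I would state carefully, is the equality $\mathcal H_{(C,D)}=\mathcal P_D$ in part (1): it rests entirely on the implication $C\neq0\Rightarrow l\equiv0$ coming from the first line of \eqref{sys}, which is what collapses the potentially six-dimensional family to a four-dimensional one and makes the answer $C$-independent.
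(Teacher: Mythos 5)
Your proposal is correct and follows exactly the route the paper intends: the theorem is stated there without a separate proof precisely because it is read off from the case-by-case resolution of the system \eqref{sys}, with the key point being, as you identify, that $2Cl=0$ forces $l\equiv 0$ when $C\neq 0$, leaving the four $C$-independent generators, while $C=0$ frees the two extra constants $C_5,C_6$. Your observation that closure of $\mathcal{P}_D$ under the bracket is automatic from $\mathcal{P}_D=\mathcal{H}_{(C,D)}$ (the bracket of vector fields not depending on $V$) is a clean way to dispense with the explicit bracket computations, which the paper in any case carries out later.
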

 
\begin{remark}The theorem is also true with the $\widetilde{\mathcal{H}}$,$\widetilde{\mathcal{P}}$.
\end{remark}

\section{Continuity of the $\widetilde{\mathcal{H}}_{(C,D)}$ in $D$}

Now we will transform the basis to show its continuity in $D$ at $0$, that means when $D\longrightarrow 0$ the basis tends to be the one for $D=0$

\begin{enumerate}
\item $C\neq 0$\\

\begin{tabular}{l | l}
\begin{minipage}{6cm}
$D >0$\\
Let's define :\\

$\begin{array}{l}
\widetilde{R}_1  :=-(\dfrac{\widetilde{P}_2-\widetilde{P}_1}{\varepsilon}+\dfrac{2}{\varepsilon^2}\widetilde{P}_3),\\
\widetilde{R}_2  :=\dfrac{1}{2}(\widetilde{P}_1+\widetilde{P}_2- \dfrac{1}{2}\theta^2 \widetilde{P}_4),\\
\widetilde{R}_3  :=\widetilde{P}_3,\\
\widetilde{R}_4  :=\widetilde{P}_4.\\
\end{array}$\\

Using a Taylor expansion of order $2$ in $\varepsilon$ near to $0$ above 0, we find :\\

$\begin{array}{l}
\widetilde{R}_1\underset{\varepsilon \rightarrow 0}{\rightarrow}\widetilde{M}_1,\\
\widetilde{R}_2\underset{\varepsilon \rightarrow 0}{\rightarrow}\widetilde{M}_2,\\
\widetilde{R}_3 =\widetilde{M}_3,\\
\widetilde{R}_4 =\widetilde{M}_4.
\end{array}$

\end{minipage}
&
\begin{minipage}{6cm}
 $D <0$\\
Let's define :\\

$\begin{array}{l}
\widetilde{V}_1  :=2(\dfrac{\widetilde{P}_2}{\varepsilon}+\dfrac{1}{\varepsilon^2}\widetilde{P}_3),\\
\widetilde{V}_2  :=\widetilde{P}_1-\dfrac{\theta^2}{4} \widetilde{P}_4,\\
\widetilde{V}_3  :=\widetilde{P}_3,\\
\widetilde{V}_4  :=\widetilde{P}_4.\\
\end{array}$\\

Using a Taylor expansion of order $2$ in $\varepsilon$ near to $0$ above zero, we find :\\

$\begin{array}{l}
\widetilde{V}_1\underset{\varepsilon \rightarrow 0}{\rightarrow}\widetilde{M}_1,\\
\widetilde{V}_2\underset{\varepsilon \rightarrow 0}{\rightarrow}\widetilde{M}_2,\\
\widetilde{V}_3 =\widetilde{M}_3,\\
\widetilde{V}_4 =\widetilde{M}_4.
\end{array}$
\end{minipage}
\end{tabular}

\item $C= 0$\\
We use again the same changes for the 4 first vectors in the 2 cases.\\

\begin{tabular}{l|l}
\begin{minipage}{6cm}
$D >0$ \\
Let's define :\\

$\begin{array}{l}
\widetilde{R}_5  :=\widetilde{P}_5-\widetilde{P}_6,\\
\widetilde{R}_6  :=\widetilde{P}_5.\\
\end{array}$\\

Using a Taylor expansion of order $2$ in $\varepsilon$ near to $0$ above zero, we find :  \\

$\begin{array}{l}
\widetilde{R}_5 \underset{\varepsilon \rightarrow 0}{\rightarrow}\widetilde{M}_5,\\
\widetilde{R}_6 \underset{\varepsilon \rightarrow 0}{\rightarrow}\widetilde{M}_6.
\end{array}$
\end{minipage}
 &
\begin{minipage}{6cm}
$D<0$\\
Let's define :\\

$\begin{array}{l}
\widetilde{V}_5  :={2{\varepsilon}\widetilde{P}_6},\\
\widetilde{V}_6  :=\widetilde{P}_5.\\
\end{array}$\\

Using a Taylor expansion of order $2$ in $\varepsilon$ near to $0$ above zero, we find :\\

$\begin{array}{l}
\widetilde{V}_5 \underset{\varepsilon \rightarrow 0}{\rightarrow}\widetilde{M}_5,\\
\widetilde{V}_6 \underset{\varepsilon \rightarrow 0}{\rightarrow}\widetilde{M}_6.\\
\end{array}$
\end{minipage} 
\end{tabular}
\end{enumerate}

\section{Computation of the Lie brackets of $\widetilde{\mathcal{H}}_{(C,D)}$}

\begin{enumerate}

\item $D>0$.
\begin{equation}
\begin{split}
[\widetilde{R}_1,\widetilde{R}_2]&=-\frac{1}{\varepsilon^2}(e^{\varepsilon t}+e^{-\varepsilon t}-2)\frac{\partial}{\partial t}+\frac{1}{2\varepsilon}q (e^{-\varepsilon t}-e^{\varepsilon t})\frac{\partial}{\partial q}\frac{q^2}{4\gamma}(e^{\varepsilon t}+e^{-\varepsilon t})-\frac{1}{4\varepsilon}(e^{-\varepsilon t}-e^{\varepsilon t})=\widetilde{R}_1\\
[\widetilde{R}_1,\widetilde{R}_3]&=\frac{e^{-\varepsilon t}-e^{\varepsilon t}}{\varepsilon}\frac{\partial}{\partial t}-\frac{e^{\varepsilon t}+e^{-\varepsilon t}}{2}q\frac{\partial}{\partial q}+\frac{q^2}{4\gamma}\varepsilon (e^{\varepsilon t}-e^{-\varepsilon t})-\frac{1}{4}(e^{\varepsilon t}+e^{-\varepsilon t})=2\widetilde{R}_2+\frac{\gamma}{2}\widetilde{R}_4\\
[\widetilde{R}_1,\widetilde{R}_4]&=0\\
[\widetilde{R}_1,\widetilde{R}_5]&=0\\
[\widetilde{R}_1,\widetilde{R}_6]&=\frac{1}{\varepsilon}(e^{-\frac{\varepsilon}{2} t}-e^{\frac{\varepsilon}{2} t })\frac{\partial}{\partial q}+\frac{q}{2\gamma}(e^{-\frac{\varepsilon}{2} t}+e^{\frac{\varepsilon}{2} t})=\widetilde{R}_5\\
[\widetilde{R}_2,\widetilde{R}_3]&=-\frac{1}{2}(e^{\varepsilon t}+e^{-\varepsilon t})\frac{\partial}{\partial t}-\frac{1}{4}q \varepsilon (e^{\varepsilon t}-e^{-\varepsilon t})\frac{\partial}{\partial q}+\frac{q^2}{8\gamma}\varepsilon^2(e^{\varepsilon t}+e^{-\varepsilon t})-\frac{1}{8}\varepsilon(e^{\varepsilon t}-e^{-\varepsilon t})=\frac{1}{2}\varepsilon^2\widetilde{R}_1+\widetilde{R}_3 \\
[\widetilde{R}_2,\widetilde{R}_4]&=0\\
[\widetilde{R}_2,\widetilde{R}_5]&=\frac{1}{2\varepsilon}(e^{\frac{\varepsilon}{2} t}-e^{-\frac{\varepsilon}{2} t})\frac{\partial}{\partial q}-\frac{1}{4\gamma}(e^{\frac{\varepsilon}{2} t}+e^{-\frac{\varepsilon}{2} t})=-\frac{1}{2}\widetilde{R}_5\\
[\widetilde{R}_2,\widetilde{R}_6]&=-\frac{1}{2}e^{\frac{\varepsilon}{2} t}\frac{\partial}{\partial q}+\frac{1}{2\gamma}q\frac{\varepsilon}{2} e^{\frac{\varepsilon}{2} t}=\frac{\varepsilon}{2} \widetilde{R}_5+\frac{1}{2}\widetilde{R}_6\\
[\widetilde{R}_3,\widetilde{R}_4]&=0\\
[\widetilde{R}_3,\widetilde{R}_5]&=(e^{\frac{\varepsilon}{2} t}+e^{-\frac{\varepsilon}{2} t})\frac{\partial}{\partial q}-\frac{1}{2\gamma}q\frac{\varepsilon}{2}(e^{\frac{\varepsilon}{2} t}-e^{-\frac{\varepsilon}{2} t})=-\frac{\varepsilon}{2} \widetilde{R}_5-\widetilde{R}_6\\
[\widetilde{R}_3,\widetilde{R}_6]&=-\frac{\varepsilon}{2} e^{-\frac{\varepsilon}{2} t}\frac{\partial}{\partial q}-\frac{1}{\gamma}q\frac{\varepsilon}{2}^2 e^{-\frac{\varepsilon}{2} t}=\frac{\varepsilon}{2} \widetilde{R}_6\\
[\widetilde{R}_4,\widetilde{R}_5]&=0\\
[\widetilde{R}_4,\widetilde{R}_6]&=0\\
[\widetilde{R}_5,\widetilde{R}_6]&=\frac{1}{\gamma}=-\widetilde{R}_4
\end{split}
\end{equation}

\item $D=0$.
\begin{equation}
\begin{split}
[\widetilde{M}_1,\widetilde{M}_2]&=  \widetilde{M}_1\\
[\widetilde{M}_1,\widetilde{M}_3]&=2\widetilde{M}_2+\frac{\gamma}{2}\widetilde{M}_4\\
[\widetilde{M}_1,\widetilde{M}_4]&=0\\
[\widetilde{M}_1,\widetilde{M}_5]&=0\\
[\widetilde{M}_1,\widetilde{M}_6]&=-t\frac{\partial}{\partial q}+\frac{1}{\gamma}q=\widetilde{M}_5\\
[\widetilde{M}_2,\widetilde{M}_3]&=\widetilde{M}_3\\
[\widetilde{M}_2,\widetilde{M}_4]&=0\\
[\widetilde{M}_2,\widetilde{M}_5]&=\frac{1}{2}t\frac{\partial}{\partial q}-\frac{1}{2\gamma}q=-\frac{1}{2}\widetilde{M}_5\\
[\widetilde{M}_2,\widetilde{M}_6]&=-\frac{1}{2}\frac{\partial}{\partial q}=\frac{1}{2}\widetilde{M}_6\\
[\widetilde{M}_3,\widetilde{M}_4]&=0\\
[\widetilde{M}_3,\widetilde{M}_5]&=\frac{\partial}{\partial q}=-\widetilde{M}_6\\
[\widetilde{M}_3,\widetilde{M}_6]&=0\\
[\widetilde{M}_4,\widetilde{M}_5]&=0\\
[\widetilde{M}_4,\widetilde{M}_6]&=0\\
[\widetilde{M}_5,\widetilde{M}_6]&=\frac{1}{\gamma}=-\widetilde{M}_4\\
\end{split}
\end{equation}
\begin{remark}
Now we compute the Lie brackets of the $\widetilde{N_*}$ from Eq(\ref{ntilde})
\begin{equation}
\begin{split}
[\widetilde{N}_1,\widetilde{N}_2]&=[\widetilde{M}_3,\widetilde{M}_6]=0\\
[\widetilde{N}_1,\widetilde{N}_3]&=[\widetilde{M}_3,-\gamma\widetilde{M}_4]=0\\
[\widetilde{N}_1,\widetilde{N}_4]&=[\widetilde{M}_3,2\widetilde{M}_2]=-2\widetilde{M}_3=-2\widetilde{N}_1\\
[\widetilde{N}_1,\widetilde{N}_5]&=[\widetilde{M}_3,-2\widetilde{M}_5]=2\widetilde{M}_6=2\widetilde{N}_2\\
[\widetilde{N}_1,\widetilde{N}_6] &=[\widetilde{M}_3,2\widetilde{M}_1]=-4\widetilde{M}_2-\gamma\widetilde{M}_4= -2\widetilde{N}_4+\widetilde{N}_3\\
[\widetilde{N}_2,\widetilde{N}_3]&=[\widetilde{M}_6,-\gamma\widetilde{M}_4]=0\\
[\widetilde{N}_2,\widetilde{N}_4]&=[\widetilde{M}_6,2\widetilde{M}_2]=-\widetilde{M}_6=-\widetilde{N}_2\\
[\widetilde{N}_2,\widetilde{N}_5]&=[\widetilde{M}_6,-2\widetilde{M}_5]=-2\widetilde{M}_4=\frac{2}{\gamma}\widetilde{N}_3\\
[\widetilde{N}_2,\widetilde{N}_6]&=[\widetilde{M}_6,2\widetilde{M}_1]=-2\widetilde{M}_5= \widetilde{N}_5\\
[\widetilde{N}_3,\widetilde{N}_4]&=[-\gamma \widetilde{M}_4,2\widetilde{M}_2]=0\\
[\widetilde{N}_3,\widetilde{N}_5]&=[-\gamma \widetilde{M}_4,-2\widetilde{M}_5]=0\\
[\widetilde{N}_3,\widetilde{N}_6]&=[-\gamma \widetilde{M}_4,2\widetilde{M}_1]=0\\
[\widetilde{N}_4,\widetilde{N}_5]&=[2\widetilde{M}_2,-2\widetilde{M}_5]=2\widetilde{M}_5= -\widetilde{N}_5\\
[\widetilde{N}_4,\widetilde{N}_6]&=[2\widetilde{M}_2,2\widetilde{M}_1]=-4\widetilde{M}_1= -2\widetilde{N}_6\\
[\widetilde{N}_5,\widetilde{N}_6]&=[-2\widetilde{M}_5,2\widetilde{M}_1]=0\\
\end{split}
\end{equation} 
That is consistent with the results of \cite{LZ2} ,\S 3, and the fact that $N\mapsto -\widetilde{N}$ is an algebra morphism (\textit{Lemma \ref{HJB.defNtilde}}).
\end{remark}
\item $D<0$.

\begin{equation}
\begin{split}
[\widetilde{V}_1,\widetilde{V}_2]&=(-\frac{2}{\varepsilon^2}+\frac{2}{\varepsilon\gamma}\cos(\varepsilon t))\frac{\partial}{\partial t} -\frac{q}{\varepsilon}\sin(\varepsilon t)\frac{\partial}{\partial q}+\frac{ \cos(\varepsilon t) q^2}{2\gamma}-\frac{\sin(\varepsilon t) }{2\varepsilon}=\widetilde{V}_1\\
[\widetilde{V}_1,\widetilde{V}_3]&=-\frac{2}{\varepsilon}\sin(\varepsilon t) \frac{\partial}{\partial t}-q \cos(\varepsilon t) \frac{\partial}{\partial q}-\frac{1}{2\varepsilon \gamma} q^2\sin(\varepsilon t)-\frac{1}{2}\cos(\varepsilon t)=2\widetilde{V}_2+\frac{\gamma}{2} \widetilde{V}_4\\
[\widetilde{V}_1,\widetilde{V}_4]&=0\\
[\widetilde{V}_1,\widetilde{V}_5]&=0\\
[\widetilde{V}_1,\widetilde{V}_6]&=-\frac{2}{\varepsilon}\sin(\frac{\varepsilon}{2} t)\frac{\partial}{\partial q}+\frac{q}{\gamma}\cos(\frac{\varepsilon}{2} t)=\widetilde{V}_5\\
[\widetilde{V}_2,\widetilde{V}_3]&=-\cos(\varepsilon t) \frac{\partial}{\partial t}+q\frac{1}{2}\varepsilon \sin(\varepsilon t)\frac{\partial}{\partial q}-\frac{1}{4 \gamma}q^2\varepsilon^2 \cos(\varepsilon t)+\frac{1}{4}\varepsilon \sin(\varepsilon t)=-\frac{\varepsilon^2}{2}\widetilde{V}_1+\widetilde{V}_3\\
[\widetilde{V}_2,\widetilde{V}_4]&=0\\
[\widetilde{V}_2,\widetilde{V}_5]&=\frac{1}{\varepsilon}\sin(\frac{\varepsilon}{2} t) \frac{\partial}{\partial q}-\frac{1}{2\gamma}q\cos(\frac{\varepsilon}{2} t)=-\frac{1}{2}\widetilde{V}_5\\
[\widetilde{V}_2,\widetilde{V}_6]&=-\frac{1}{2}\cos(\frac{\varepsilon}{2} t) \frac{\partial}{\partial q}-\frac{\varepsilon}{4\gamma}q\sin(\frac{\varepsilon}{2} t)=\frac{1}{2}\widetilde{V}_6\\
[\widetilde{V}_3,\widetilde{V}_4]&=0\\
[\widetilde{V}_3,\widetilde{V}_5]&=\cos(\frac{\varepsilon}{2} t) \frac{\partial}{\partial q}+\frac{\varepsilon}{2\gamma}q\sin(\frac{\varepsilon}{2} t)=-\widetilde{V}_6\\
[\widetilde{V}_3,\widetilde{V}_6]&=-\frac{\varepsilon}{2} \sin(\frac{\varepsilon}{2} t)\frac{\partial}{\partial q}+\frac{1}{\gamma}q\frac{\varepsilon}{2} \cos(\frac{\varepsilon}{2} t)=\frac{\varepsilon}{2}^2 \widetilde{V}_5\\
[\widetilde{V}_4,\widetilde{V}_5]&=0\\
[\widetilde{V}_4,\widetilde{V}_6]&=0\\
[\widetilde{V}_5,\widetilde{V}_6]&=\frac{1}{\gamma}=-\widetilde{V}_4\\
\end{split}
\end{equation}

\end{enumerate}

\section[Isomorphisms and Structure]{Isomorphism of $\widetilde{\mathcal{H}}_{(C,D)}$ and $\widetilde{\mathcal{H}}_{(C,0)}$ with $C \neq 0$ and of $\widetilde{\mathcal{H}}_{(0,D)}$ and $\widetilde{\mathcal{H}}_{(0,0)}$. Structure of the algebra. }

\subsection{ $D>0$}

The two algebras are isomorphic. To prove it, we use a different basis of $\widetilde{\mathcal{H}}_{(C,D)}$ :
\begin{equation}
\begin{split}
S_1&:=\widetilde{R}_1\\
S_2&:=-\frac{\varepsilon}{2} \widetilde{R}_1+\widetilde{R}_2\\
S_3&:=\varepsilon^2 \widetilde{R}_1 -\varepsilon \widetilde{R}_2+\widetilde{R}_3 - \frac{\varepsilon}{2} \widetilde{R}_4\\
S_4&:=\widetilde{R}_4\\
S_5&:=\widetilde{R}_5\\
S_6&:=\widetilde{R}_6\\
\end{split}
\end{equation}
If $C\neq 0$, we define $\widetilde{S}_5$ and $\widetilde{S}_6$ as equal to zero.

Then we define the isomorphism :

\begin{equation}
\begin{split}
\phi : \widetilde{\mathcal{H}}_{(C,D)} &\longrightarrow \widetilde{\mathcal{H}}_{(C,0)}\\
 S_i &\longmapsto M_i
\end{split}
\end{equation}

We note that the 4 first vectors are independent from the last two, so that the two cases $C=0$ and $C\neq 0$ are here.

\subsection{ $D<0$}

Let's define a new basis :
\begin{equation}
\begin{split}
S_1&:=\widetilde{V}_1\\
S_2&:=\widetilde{V}_2\\
S_3&:=-\frac{\varepsilon}{2}^2 \widetilde{V}_1+\widetilde{V}_3\\
S_4&:=\widetilde{V}_4\\
S_5&:=\widetilde{V}_5\\
S_6&:=\widetilde{V}_6.\\
\end{split}
\end{equation}
if $C\neq 0$, we define $\widetilde{S}_5$ and $\widetilde{S}_6$ as equal to zero.

Then we define the isomorphism :

\begin{equation}
\begin{split}
\phi : \widetilde{\mathcal{H}}_{(C,D)} &\longrightarrow \widetilde{\mathcal{H}}_{(C,0)}\\
 V_i &\longmapsto M_i.
\end{split}
\end{equation}

Then again, we note that the 4 first vectors are independent from the last two, so the two cases $C=0$ and $C\neq 0$ are here.

\subsection{Structure}

\begin{theorem}
\begin{enumerate}
\item  $\widetilde{H}_{(0,D)}$ is isomorphic to the semidirect product of $sl_2(\mathbb{R})$ and a Heisenberg algebra $H_3$ of dimension $3$.
\item $\widetilde{H}_{(C,D)}$, $C\neq 0$, is isomorphic to the direct product of the center $Z(H_3)\simeq \mathbb{R}$ and $sl_2(\mathbb{R})$; in fact the only isovector remaining from the Heisenberg algebra when $C \neq 0$ generates the center of
 $\widetilde{H}_{(0,D)}$.
\end{enumerate}
\end{theorem}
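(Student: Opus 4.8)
The plan is to push everything back to the case $D=0$, where the Lie brackets of the basis $\widetilde{M}_1,\dots,\widetilde{M}_6$ of $\widetilde{\mathcal{H}}_{(0,0)}$ (resp. $\widetilde{M}_1,\dots,\widetilde{M}_4$ of $\widetilde{\mathcal{H}}_{(C,0)}$) are already tabulated, and then to read off the two structures by an elementary change of basis. For the reduction I would invoke the isomorphisms $\phi$ of the previous section, sending $S_i\mapsto\widetilde{M}_i$ when $D>0$ and $V_i\mapsto\widetilde{M}_i$ when $D<0$; their being Lie algebra isomorphisms of $\widetilde{\mathcal{H}}_{(C,D)}$ onto $\widetilde{\mathcal{H}}_{(C,0)}$ is exactly the term-by-term comparison of the $\S8$ bracket tables for the $[\widetilde{R}_i,\widetilde{R}_j]$ and the $[\widetilde{V}_i,\widetilde{V}_j]$ against those of the $\widetilde{M}_i$. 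Once this is in hand it suffices to prove both items for $D=0$.

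For item (1), working in $\widetilde{\mathcal{H}}_{(0,0)}$ I would set $X:=\widetilde{M}_1$, $Y:=-\widetilde{M}_3$ and $H:=-2\widetilde{M}_2-\tfrac{\gamma}{2}\widetilde{M}_4$; using $[\widetilde{M}_i,\widetilde{M}_4]=0$ together with $[\widetilde{M}_1,\widetilde{M}_2]=\widetilde{M}_1$, $[\widetilde{M}_2,\widetilde{M}_3]=\widetilde{M}_3$ and $[\widetilde{M}_1,\widetilde{M}_3]=2\widetilde{M}_2+\tfrac{\gamma}{2}\widetilde{M}_4$ one checks $[H,X]=2X$, $[H,Y]=-2Y$, $[X,Y]=H$, so $\mathfrak{s}:=\langle X,Y,H\rangle\cong sl_2(\mathbb{R})$ — the shift by $-\tfrac{\gamma}{2}\widetilde{M}_4$ being there precisely to absorb the anomalous central term in $[\widetilde{M}_1,\widetilde{M}_3]$. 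Next I would take $\mathfrak{n}:=\langle\widetilde{M}_4,\widetilde{M}_5,\widetilde{M}_6\rangle$: since $[\widetilde{M}_5,\widetilde{M}_6]=-\widetilde{M}_4$ and $\widetilde{M}_4$ is central, $\mathfrak{n}\cong H_3$ with $Z(\mathfrak{n})=\mathbb{R}\widetilde{M}_4$, and the remaining table entries ($[\widetilde{M}_1,\widetilde{M}_6]=\widetilde{M}_5$, $[\widetilde{M}_2,\widetilde{M}_5]=-\tfrac12\widetilde{M}_5$, $[\widetilde{M}_2,\widetilde{M}_6]=\tfrac12\widetilde{M}_6$, $[\widetilde{M}_3,\widetilde{M}_5]=-\widetilde{M}_6$, plus the vanishing ones) show $[\mathfrak{s},\mathfrak{n}]\subseteq\mathfrak{n}$, so $\mathfrak{n}$ is an ideal, with $\langle\widetilde{M}_5,\widetilde{M}_6\rangle$ carrying the standard $2$-dimensional irreducible $sl_2(\mathbb{R})$-module and $\widetilde{M}_4$ the trivial one. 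Since $\dim\mathfrak{s}=\dim\mathfrak{n}=3$, $\mathfrak{s}\cap\mathfrak{n}=0$ and $\mathfrak{s}+\mathfrak{n}$ exhausts the $6$-dimensional $\widetilde{\mathcal{H}}_{(0,0)}$, I conclude $\widetilde{\mathcal{H}}_{(0,0)}\cong sl_2(\mathbb{R})\ltimes H_3$.

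For item (2), when $C\neq0$ the conditions of $\S6$ force $l=0$, so $\widetilde{\mathcal{H}}_{(C,0)}=\langle\widetilde{M}_1,\widetilde{M}_2,\widetilde{M}_3,\widetilde{M}_4\rangle$ is $4$-dimensional with $\widetilde{M}_4$ central; the triple $(X,Y,H)$ above still lies in it and spans an $sl_2(\mathbb{R})$ complementary to $\mathbb{R}\widetilde{M}_4$ with zero intersection, and since $\widetilde{M}_4$ commutes with $\mathfrak{s}$ the sum is direct, giving $\widetilde{\mathcal{H}}_{(C,0)}\cong sl_2(\mathbb{R})\times\mathbb{R}$. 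To match the ``in fact'' clause I would compute $Z(\widetilde{\mathcal{H}}_{(0,0)})$ outright: writing a general element $aX+bY+cH+d\widetilde{M}_5+e\widetilde{M}_6+f\widetilde{M}_4$ and imposing that it bracket trivially with $X$ and with $Y$ kills every coefficient except $f$, so $Z(\widetilde{\mathcal{H}}_{(0,0)})=\mathbb{R}\widetilde{M}_4=Z(H_3)$ — exactly the lone Heisenberg isovector surviving for $C\neq0$; transporting back through $\phi$ yields both statements for every $D$.

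Conceptually nothing here is deep: the content sits entirely in bracket tables that have already been assembled. The real work — and the only place an error could slip in — is the mechanical check in the first step that the basis changes of $\S9$ intertwine the $\S8$ bracket tables in all three sign regimes of $D$, together with getting the $-\tfrac{\gamma}{2}\widetilde{M}_4$ correction in the $sl_2(\mathbb{R})$ triple exactly right so that the anomalous central term disappears. I would also pause to confirm that the $sl_2(\mathbb{R})$-action on $\langle\widetilde{M}_5,\widetilde{M}_6\rangle$ really is the standard irreducible representation, so that the semidirect product obtained is the familiar Jacobi-type extension of $sl_2(\mathbb{R})$ by $H_3$ and not a twisted variant.
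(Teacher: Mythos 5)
Your proof is correct and follows essentially the same route as the paper: reduce to $D=0$ via the isomorphisms of the preceding section, exhibit an $sl_2(\mathbb{R})$ triple obtained from $\widetilde{M}_1,\widetilde{M}_3$ and $2\widetilde{M}_2+\tfrac{\gamma}{2}\widetilde{M}_4$ (the paper's $e,f,h$ differ from your $X,Y,H$ only by normalization and sign), and let it act on the Heisenberg ideal $\langle\widetilde{M}_4,\widetilde{M}_5,\widetilde{M}_6\rangle$. You merely spell out the verifications (ideal property, trivial intersection, identification of the center) that the paper leaves implicit.
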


\begin{proof}
Using the isomorphisms defined above, it is sufficient to show the theorem for the basis $\widetilde{M}_*$.\\
Setting :
\begin{align*}
e&:=-\dfrac{1}{2}\widetilde{M}_3\\
f&:=2\widetilde{M}_1\\
h&:=2\widetilde{M}_2+\dfrac{\gamma}{2}\widetilde{M}_4,
\end{align*}
we have the usual $sl_2(\mathbb{R})$ basis which acts on $\langle \widetilde{M}_4, \widetilde{M}_5, \widetilde{M}_6 \rangle \simeq H_3$, with $H_3$ the Heisenberg algebra of dimension $3$, and $Z(H_3)\simeq \langle \widetilde{M}_4 \rangle$.

\end{proof}

\begin{remark} This precises the identification made in \S 3 in \cite{LZ2}.
\end{remark}

\section{Finding the mapping from the isovectors}

The aim is now to find the mapping associated with the isovectors found in the case $D=0$ and $C=0$, to compute explicitly $e^{\mu \widetilde{M}_*}$ This associates to a solution $\eta$ of the Eq(\ref{eqdep}) another solution : 
\begin{equation}
\frac{d}{d\mu} (e^{\mu \widetilde{M}_*}\eta(t,q))_{\mid \mu=0}=\widetilde{M}_*\eta(t,q)
\end{equation}
and
\begin{equation}
e^{\mu' \widetilde{M}_*}(e^{\mu \widetilde{M}_*}\eta(t,q))=e^{(\mu+\mu') \widetilde{M}_*}\eta(t,q);
\end{equation}
these properties characterize $e^{\mu \widetilde{M}_*}$.\\
One finds

\begin{equation}
\begin{split}
\widetilde{M}_1=-tq \frac{\partial}{\partial q}-t^2\frac{\partial}{\partial t}-\frac{\gamma t-q^2}{2\gamma} \cdot \qquad \qquad &e^{\mu \widetilde{M}_1}\eta(t,q)=\frac{1}{\sqrt{1+\mu t}}e^{\frac{\mu q^2}{2\gamma (1+\mu t) }}\eta(\frac{t}{1+\mu t}, \frac{q}{1+\mu t})\\
\widetilde{M}_2=-\frac{q}{2}\frac{\partial}{\partial q}-t \frac{\partial}{\partial t}\qquad \qquad &e^{\mu \widetilde{M}_2}\eta(t,q)=\eta(e^{-\mu}t,e^{-\frac{\mu}{2}}q) \\
\widetilde{M}_3=-\frac{\partial}{\partial t} \qquad \qquad &e^{\mu \widetilde{M}_3}\eta(t,q)=\eta(t-\mu,q)\\
\widetilde{M}_4=-\frac{1}{\gamma}\cdot \qquad \qquad &e^{\mu \widetilde{M}_4}\eta(t,q)=e^{-\frac{\mu}{\gamma}}\eta(t,q) \\
\widetilde{M}_5=-t\frac{\partial}{\partial q}+\frac{1}{\gamma}q \cdot \qquad \qquad &e^{\mu \widetilde{M}_5}\eta(t,q)=e^{\frac{\mu}{\gamma}q-\frac{\mu^2 t}{2\gamma}}\eta(t,q-\mu t)\\
\widetilde{M}_6=-\frac{\partial}{\partial q} \qquad \qquad &e^{\mu \widetilde{M}_6}\eta(t,q)=\eta(t,q-\mu).
\end{split}
\end{equation}
The computations of $e^{\mu \widetilde{M}_1}$ and $e^{\mu \widetilde{M}_2}$ had already been made in \cite{LZ2} \S 7.
This list is given only for completeness. It can be found, for instance in \cite{OPJ}.

\section{Computation of $\Omega_\eta$, on the isovectors basis}
Using the operator from \textit{Corollary \ref{defOmega}},
\begin{equation}
\begin{split}
\Omega_\eta :\mathcal{H}^2_{(C,D)} &\rightarrow \bigwedge T^*(\mathbb{R}^2)\\
(\widetilde{N},\widetilde{N}')&\mapsto \frac{\gamma}{\eta}([\widetilde{N},\widetilde{N}']^t\frac{\partial}{\partial t}+[\widetilde{N},\widetilde{N}']^q\frac{\partial}{\partial q}+\frac{1}{\gamma}[\widetilde{N},\widetilde{N}']^S)\eta
\end{split}
\end{equation}
and compute its value on the different vectors of the basis given previously.\\
Let's define 
\begin{equation}
\begin{split}
\widetilde{B}&=-\frac{\gamma}{\eta}\frac{\partial \eta}{\partial q}\\
\widetilde{E}&=-\frac{\gamma}{\eta}\frac{\partial \eta}{\partial t}
\end{split}
\end{equation}

\begin{enumerate}
\item $ D>0$.

\begin{equation}
\begin{split}
\Omega_\eta(\widetilde{R}_1,\widetilde{R}_2)&=\frac{\gamma}{\varepsilon^2}(e^{\varepsilon t}+e^{-\varepsilon t}-2)\widetilde{E}-\frac{\gamma}{ 2\varepsilon}q (e^{-\varepsilon t}-e^{\varepsilon t})\widetilde{B}\\
&\qquad+\frac{q^2}{4}(e^{\varepsilon t}+e^{-\varepsilon t})-\frac{1}{4\varepsilon}(e^{-\varepsilon t}-e^{\varepsilon t})\\
\Omega_\eta(\widetilde{R}_1,\widetilde{R}_3)&=-\gamma\frac{e^{-\varepsilon t}-e^{\varepsilon t}}{\varepsilon}\widetilde{E}+\gamma \frac{e^{\varepsilon t}+e^{-\varepsilon t}}{\eta}q\widetilde{B}+\frac{q^2}{4}\varepsilon (e^{\varepsilon t}-e^{-\varepsilon t})-\frac{\gamma}{4}(e^{\varepsilon t}+e^{-\varepsilon t})\\
\Omega_\eta(\widetilde{R}_1,\widetilde{R}_4)&=0\\
\Omega_\eta(\widetilde{R}_1,\widetilde{R}_5)&=0\\
\Omega_\eta(\widetilde{R}_1,\widetilde{R}_6)&=-\frac{\gamma}{\varepsilon}(e^{-\frac{\varepsilon}{2} t}-e^{\frac{\varepsilon}{2} t })\widetilde{B}+\frac{q}{2}(e^{-\frac{\varepsilon}{2} t}+e^{\frac{\varepsilon}{2} t})\\
\Omega_\eta(\widetilde{R}_2,\widetilde{R}_3)&=\frac{\gamma}{2}(e^{\varepsilon t}+e^{-\varepsilon t})\widetilde{E}+\frac{\gamma}{4}q \varepsilon (e^{\varepsilon t}-e^{-\varepsilon t})\widetilde{B}+\frac{q^2}{8}\varepsilon^2(e^{\varepsilon t}+e^{-\varepsilon t})-\frac{\gamma}{8}\varepsilon(e^{\varepsilon t}-e^{-\varepsilon t})\\
\Omega_\eta(\widetilde{R}_2,\widetilde{R}_4)&=0\\
\Omega_\eta(\widetilde{R}_2,\widetilde{R}_5)&=-\frac{\gamma}{2\varepsilon}(e^{\frac{\varepsilon}{2} t}-e^{-\frac{\varepsilon}{2} t})\widetilde{B}-\frac{1}{4}(e^{\frac{\varepsilon}{2} t}+e^{-\frac{\varepsilon}{2} t})\\
\Omega_\eta(\widetilde{R}_2,\widetilde{R}_6)&=\frac{\gamma}{2}e^{\frac{\varepsilon}{2} t}\widetilde{B}+\frac{1}{2}q\frac{\varepsilon}{2} e^{\frac{\varepsilon}{2} t}\\
\Omega_\eta(\widetilde{R}_3,\widetilde{R}_4)&=0\\
\Omega_\eta(\widetilde{R}_3,\widetilde{R}_5)&=-\gamma(e^{\frac{\varepsilon}{2} t}+e^{-\frac{\varepsilon}{2} t})\widetilde{B}-\frac{1}{2}q\frac{\varepsilon}{2}(e^{\frac{\varepsilon}{2} t}-e^{-\frac{\varepsilon}{2} t})\\
\Omega_\eta(\widetilde{R}_3,\widetilde{R}_6)&=\gamma\frac{\varepsilon}{2} e^{-\frac{\varepsilon}{2} t}\widetilde{B}-q\frac{\varepsilon}{2}^2 e^{-\frac{\varepsilon}{2} t}\\
\Omega_\eta(\widetilde{R}_4,\widetilde{R}_5)&=0\\
\Omega_\eta(\widetilde{R}_4,\widetilde{R}_6)&=0\\
\Omega_\eta(\widetilde{R}_5,\widetilde{R}_6)&=1\\
\end{split}
\end{equation}

\item $D=0$.
\begin{equation}
\begin{split}
\Omega_\eta(\widetilde{M}_1,\widetilde{M}_2)&=\gamma t^2\widetilde{E}+\gamma\widetilde{B}-\frac{t\gamma-q^2}{2}\\
\Omega_\eta(\widetilde{M}_1,\widetilde{M}_3)&=\gamma q \widetilde{B}+\gamma\eta t\widetilde{E}-\frac{\gamma}{2}\\
\Omega_\eta(\widetilde{M}_1,\widetilde{M}_4)&=0\\
\Omega_\eta(\widetilde{M}_1,\widetilde{M}_5)&=0\\
\Omega_\eta(\widetilde{M}_1,\widetilde{M}_6)&=\gamma t\widetilde{B}+q\\
\Omega_\eta(\widetilde{M}_2,\widetilde{M}_3)&=\gamma\widetilde{E}\\
\Omega_\eta(\widetilde{M}_2,\widetilde{M}_4)&=0\\
\Omega_\eta(\widetilde{M}_2,\widetilde{M}_5)&=-\frac{\gamma}{2}t\widetilde{B}-\frac{1}{2}q\\
\Omega_\eta(\widetilde{M}_2,\widetilde{M}_6)&=\frac{\gamma}{2}\widetilde{B}\\
\Omega_\eta(\widetilde{M}_3,\widetilde{M}_4)&=0\\
\Omega_\eta(\widetilde{M}_3,\widetilde{M}_5)&=-\gamma \widetilde{B}\\
\Omega_\eta(\widetilde{M}_3,\widetilde{M}_6)&=0\\
\Omega_\eta(\widetilde{M}_4,\widetilde{M}_5)&=0\\
\Omega_\eta(\widetilde{M}_4,\widetilde{M}_6)&=0\\
\Omega_\eta(\widetilde{M}_5,\widetilde{M}_6)&=1\\
\end{split}
\end{equation}

\item $D<0$.
\begin{equation}
\begin{split}
\Omega_\eta(\widetilde{V}_1,\widetilde{V}_2)&=(\frac{2\gamma}{\varepsilon^2}-\frac{2\gamma}{\varepsilon^2}\cos(\varepsilon t))\widetilde{E} +\frac{q\gamma}{\varepsilon}\sin(\varepsilon t)\widetilde{B}+\frac{ \cos(\varepsilon t) q^2}{2}-\frac{\gamma\sin(\varepsilon t) }{2\varepsilon}\\
\Omega_\eta(\widetilde{V}_1,\widetilde{V}_3)&=\frac{2\gamma}{\varepsilon }\sin(\varepsilon t) \widetilde{E}+\gamma q \cos(\varepsilon t) \widetilde{B}-\frac{1}{2\varepsilon} q^2\sin(\varepsilon t)-\frac{\gamma}{2}\cos(\varepsilon t)\\
\Omega_\eta(\widetilde{V}_1,\widetilde{V}_4)&=0\\
\Omega_\eta(\widetilde{V}_1,\widetilde{V}_5)&=0\\
\Omega_\eta(\widetilde{V}_1,\widetilde{V}_6)&=\frac{2 \gamma}{\varepsilon}\sin(\frac{\varepsilon}{2} t)\widetilde{B}+q\cos(\frac{\varepsilon}{2} t)\\
\Omega_\eta(\widetilde{V}_2,\widetilde{V}_3)&=\gamma\cos(\varepsilon t) \widetilde{E}-q\frac{\gamma}{2}\varepsilon \sin(\varepsilon t)\widetilde{B}-\frac{1}{4}q^2\varepsilon^2 \cos(\varepsilon t)+\frac{1}{4}\varepsilon \sin(\varepsilon t)\\
\Omega_\eta(\widetilde{V}_2,\widetilde{V}_4)&=0\\
\Omega_\eta(\widetilde{V}_2,\widetilde{V}_5)&=-\frac{\gamma}{\varepsilon}\sin(\frac{\varepsilon}{2} t) \widetilde{B}-\frac{1}{2}q\cos(\frac{\varepsilon}{2} t)\\
\Omega_\eta(\widetilde{V}_2,\widetilde{V}_6)&=\frac{\gamma}{2}\cos(\frac{\varepsilon}{2} t) \widetilde{B}-\frac{\varepsilon}{4}q\sin(\frac{\varepsilon}{2} t)\\
\Omega_\eta(\widetilde{V}_3,\widetilde{V}_4)&=0\\
\Omega_\eta(\widetilde{V}_3,\widetilde{V}_5)&=-\gamma\cos(\frac{\varepsilon}{2} t) \widetilde{B}+\frac{\varepsilon}{2} q\sin(\frac{\varepsilon}{2} t)\\
\Omega_\eta(\widetilde{V}_3,\widetilde{V}_6)&=\gamma\frac{\varepsilon}{2} \sin(\frac{\varepsilon}{2} t)\widetilde{B}+q\frac{\varepsilon}{2} \cos(\frac{\varepsilon}{2} t)\\
\Omega_\eta(\widetilde{V}_4,\widetilde{V}_5)&=0\\
\Omega_\eta(\widetilde{V}_4,\widetilde{V}_6)&=0\\
\Omega_\eta(\widetilde{V}_5,\widetilde{V}_6)&=1\\
\end{split}
\end{equation}
\end{enumerate}

Using the fact that $\Omega_\eta(N,N')(t,z(t))$ is a martingale for the filtration of the brownian motion $w$ (see \S 4), we find new martingales.

\section{Explicit determination of the $\widetilde{\mathcal{J}}_{(C,D)}$ and $\widetilde{\mathcal{K}}_{(C,D)}$ }

From \textit{Lemmas \ref{Jv-Kv}} and \textit{\ref{HJB.lemmeNtilde}} we deduce that $\widetilde{\mathcal{K}}_V=\{ \widetilde{N} \mid N \in \mathcal{K}_V \}$ and $\widetilde{\mathcal{J}}_V=\{\widetilde{N} \mid N \in\mathcal{H}_V \}$ are subalgebras of $\widetilde{\mathcal{H}}_V=\{\widetilde{N} \mid N\in \mathcal{H}_V \}$.
Let's find a basis for $\widetilde{\mathcal{K}}_V$ and  $\widetilde{\mathcal{J}}_V$ in the case of this potential. Setting $\widetilde{\mathcal{K}}_{(C,D)}=\widetilde{\mathcal{K}}_V$ and $\widetilde{\mathcal{J}}_{(C,D)}=\widetilde{\mathcal{J}}_{V}$\\

\begin{tabular}{|c |c |c |}
\hline
 			& $C\neq 0$ 	& $C=0$ \\
 \hline
 $D<0$ or $D>0 $	 	& 
 \begin{minipage}{4cm}
 \bigskip
 	$\widetilde{\mathcal{J}}_{(C,D)}=\{\widetilde{P}_3,\widetilde{P}_4\}$\\
 	$\widetilde{\mathcal{K}}_{(C,D)}=\{\widetilde{P}_1,\widetilde{P}_2,\widetilde{P}_3,\widetilde{P}_4\}$\\
 	$\widetilde{\mathcal{J}}_{(C,D)} \cap \widetilde{\mathcal{K}}_{(C,D)} =\{\widetilde{P}_3,\widetilde{P}_4\}$
 \bigskip
 \end{minipage}
 &
 \begin{minipage}{4cm}
 \bigskip
 	$\widetilde{\mathcal{J}}_{(C,D)}=\{\widetilde{P}_3,\widetilde{P}_4\}$\\
 	$\widetilde{\mathcal{K}}_{(C,D)}=\{\widetilde{P}_1,\widetilde{P}_2,\widetilde{P}_3,\widetilde{P}_4\}$\\
 	$\widetilde{\mathcal{J}}_{(C,D)} \cap \widetilde{\mathcal{K}}_{(C,D)} =\{\widetilde{P}_3,\widetilde{P}_4\}$
 \bigskip
 \end{minipage}
 \\
 \hline

 $D=0$		& 
  \begin{minipage}{5cm}
  \bigskip
 	$\widetilde{\mathcal{J}}_{(C,D)}=\{\widetilde{M}_2,\widetilde{M}_3,\widetilde{M}_4\}$\\
 	$\widetilde{\mathcal{K}}_{(C,D)}=\{\widetilde{M}_1,\widetilde{M}_2,\widetilde{M}_3,\widetilde{M}_4\}$\\
 	$\widetilde{\mathcal{J}}_{(C,D)} \cap \widetilde{\mathcal{K}}_{(C,D)} =\{\widetilde{M}_2,\widetilde{M}_3,\widetilde{M}_4\}$
 \bigskip
 \end{minipage}
 &
  \begin{minipage}{5cm}
  \bigskip
 	$\widetilde{\mathcal{J}}_{(C,D)}=\{\widetilde{M}_2,\widetilde{M}_3,\widetilde{M}_4, \widetilde{M}_6\}$\\
 	$\widetilde{\mathcal{K}}_{(C,D)}=\{\widetilde{M}_1,\widetilde{M}_2,\widetilde{M}_3,\widetilde{M}_4\}$\\
 	$\widetilde{\mathcal{J}}_{(C,D)} \cap \widetilde{\mathcal{K}}_{(C,D)} =\{\widetilde{M}_2,\widetilde{M}_3,\widetilde{M}_4\}$
 \bigskip
 \end{minipage}\\
 \hline
\end{tabular}
\\

\begin{remark} We shall notice that in the case $C=0$, $\widetilde{\mathcal{K}}_{C,D}=\widetilde{\mathcal{P}}_D$ ; this is a direct consequence of Lemma 3.8 of \cite{theseHouda}.\\
We see that their intersection does not depend on $C$ but only on $D $.
\end{remark}


\section{Isovectors and affine models}
We now come back to the situation described in \S \ref{aff1} \begin{proposition} The isovector algebra $\mathcal H_{V}$ associated with $V$ has dimension $6$ if and only if $\widetilde{\phi}\in\{\frac{\alpha}{4},\frac{3\alpha}{4}\}$, i.e. $\delta\in \{1,3\}$ ; in the opposite case,
it has dimension $4$.
\end{proposition}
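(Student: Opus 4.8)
The plan is to deduce this directly from the explicit dimension count carried out in Section~\ref{Isovectors}, so that essentially no new computation is needed. Recall from Section~\ref{aff1} that the potential attached to the affine model is $V(t,q)=\frac{C}{q^{2}}+Dq^{2}$ with
\[
C=\frac{\alpha^{4}}{128}(\delta-1)(\delta-3),\qquad D=\frac{\lambda^{2}}{8}\ge 0 .
\]
By the Theorem of Section~\ref{Isovectors} (parts 1--3), for a potential of precisely this form one has $\dim\mathcal H_{(C,D)}=4$ when $C\ne 0$ and $\dim\mathcal H_{(C,D)}=6$ when $C=0$, \emph{independently of the value of $D$}; in particular this dichotomy holds whether $\lambda=0$ (so $D=0$) or $\lambda\ne 0$ (so $D>0$). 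Thus the dimension of $\mathcal H_{V}$ equals $6$ precisely when $C=0$, and $4$ otherwise.

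It then remains to translate the condition $C=0$ into a condition on $\widetilde\phi$. Since $\alpha^{4}/128\ne 0$, the vanishing of $C=\frac{\alpha^{4}}{128}(\delta-1)(\delta-3)$ is equivalent to $\delta\in\{1,3\}$. Because $\alpha>0$ and $\delta=\frac{4\widetilde\phi}{\alpha}$, the equality $\delta=1$ is equivalent to $\widetilde\phi=\frac{\alpha}{4}$ and $\delta=3$ to $\widetilde\phi=\frac{3\alpha}{4}$; hence $C=0$ if and only if $\widetilde\phi\in\{\frac{\alpha}{4},\frac{3\alpha}{4}\}$. Combining this with the preceding paragraph yields the claim.

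The substantive content of the statement is therefore entirely contained in the explicit isovector computation of Section~\ref{Isovectors}; the only point deserving a word of care is to check that the value of $D$ plays no role, i.e.\ that the jump in dimension from $4$ to $6$ occurs exactly at $C=0$ for \emph{every} admissible $D$ (here $D\ge 0$), which is indeed what parts 1--3 of that Theorem assert. I expect no genuine obstacle. If one preferred a self-contained argument, one could instead specialize the system~\pref{sys} to the affine-model parameters and re-derive the two cases by hand, but invoking Section~\ref{Isovectors} is the economical route.
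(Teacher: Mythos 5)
Your argument is correct and is essentially the paper's own proof: the paper likewise just invokes the dimension count of Section \ref{Isovectors} and observes that $C=\frac{\alpha^{4}}{128}(\delta-1)(\delta-3)=0$ is equivalent to $\widetilde{\phi}\in\{\frac{\alpha}{4},\frac{3\alpha}{4}\}$. Your additional remark that the value of $D=\lambda^{2}/8\ge 0$ is irrelevant to the dichotomy is a worthwhile explicit check, but it does not change the route.
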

\begin{proof}
It is enough to apply the last result from \S \ref{Isovectors}, observing that the condition $C=0$ is equivalent to $\widetilde{\phi}\in \{\frac{\alpha}{4},\frac{3\alpha}{4}\}$.
\end{proof}

In the context of H\'enon's already mentioned PhD thesis (\cite{Henon}, p.55)
we have \newline $\phi=\kappa a$, $\lambda=\kappa$, $\alpha=\sigma^{2}$ et $\beta=0$,
whence $\widetilde{\phi}=\kappa a$ and the condition $C=0$
is equivalent to
$$\kappa a\in\{\frac{\sigma^{2}}{4},\frac{3\sigma^{2}}{4}\}\,\, .$$
Let us analyze more closely the situations in which $C=0$.

\bf{1)} 

$\widetilde{\phi}=\displaystyle\frac{\alpha}{4}$\rm, \it i.e. \rm $\delta=1\, .$

Then $y(t)$ is a solution of
$$
dy(t)=\displaystyle\frac{\alpha}{2}dw(t)-\frac{\lambda}{2}y(t)dt \nonumber \,\, ;
$$
in particular, for $\lambda>0$, $y(t)$ is an Ornstein-Uhlenbeck process
(it was already known that the Ornstein--Uhlenbeck process was a Bernstein process for a quadratic potential).
Therefore when $\lambda > 0$, $z(t)$ co\"\i ncides, on the random interval $[0,T[$, with an Ornstein--Uhlenbeck
process.
Here 
$$
\eta(t,q)=e^{\displaystyle\frac{\lambda t}{4}-\displaystyle\frac{\lambda q^{2}}{\alpha^{2}}}\,\, .
$$
From
\begin{eqnarray}
y(t)\nonumber
&=&e^{-\frac{\lambda t}{2}}(y_{0}+\frac{\alpha}{2}\int_{0}^{t}e^{\frac{\lambda s}{2}}dw(s)) \nonumber \\
&=&e^{-\frac{\lambda t}{2}}(z_{0}+\widetilde{w}(\frac{\alpha^{2}(e^{\lambda t}-1)}{4\lambda})) \nonumber
\end{eqnarray}
($\widetilde{w}$ denoting another Brownian motion),
it appears that, for fixed $t$, $y(t)$ follows a normal law with mean $e^{-\frac{\lambda t}{2}}z_{0}$ and variance $\frac{\alpha^{2}(1-e^{-\lambda t})}{4\lambda}$.
The density $\rho_{t}(q)$ of $y(t)$ is therefore given by :
$$
\rho_{t}(q)=\displaystyle\frac{2\sqrt{\lambda}}{\alpha\sqrt{2\pi(1-e^{-\lambda t})}}
\exp{(-\displaystyle\frac{2\lambda(q-e^{-\frac{\lambda t}{2}}z_{0})^{2}}{\alpha^{2}(1-e^{-\lambda t})})}\,\, .
$$
Whence
\begin{eqnarray}
\forall t>0 \,\,\,\,
\eta_{*}(t,q) 
&=&\displaystyle\frac{\rho_{t}(q)}{\eta(t,q)} \nonumber \\
&=&\displaystyle\frac{1}{\alpha}\sqrt{\displaystyle\frac{\lambda}{\pi\sinh{(\displaystyle\frac{\lambda t}{2})}}}e^{(\displaystyle\frac{-\lambda q^{2}-\lambda q^{2}e^{-\lambda t}+4\lambda qz_{0}e^{-\frac{\lambda t}{2}}-2\lambda z_{0}^{2}e^{-\lambda t}}{\alpha^{2}(1-e^{-\lambda t})})} \nonumber
\end{eqnarray} 
and one may check that, as was to be expected, $\eta_{*}$ satisfies the following equation dual to Eq(\ref{eqdep}) :
 
\begin{eqnarray}
-\gamma\displaystyle\frac{\partial \eta_{*}}{\partial t}=-\displaystyle\frac{\gamma^2}{2}
\displaystyle\frac{\partial^{2}\eta_{*}}{\partial q^{2}}+V\eta_{*}          \,\, . \label{eqconj}
\end{eqnarray}

\bf{2)}$\widetilde{\phi}=\displaystyle\frac{3\alpha}{4}$\rm, \it i.e. \rm $\delta=3$.

In that case, according to \textit{Theorem \ref{theo1Aff1}}, $T=+\infty$ whence $y=z$.
Furthermore
$$
\eta(t,q)=q e^{\displaystyle\frac{\lambda}{\alpha^{2}}(\displaystyle\frac{3\alpha^{2}t}{4}-q^{2})}\,\, .
$$
Let us define
$$
s(t)=e^{-\displaystyle\frac{\lambda t}{2}}\frac{1}{z(t)}\,\, ;
$$
then an easy computation, using It\^o's formula in the same way as above, shows that
$$
ds(t)=-\frac{\alpha}{2}e^{\frac{\lambda t}{2}}s(t)^{2}dw(t)\,\, ;
$$
in particular, $s(t)$ is a martingale.

Referring once more to \textit{Proposition \ref{StrongSolu}} and its proof, we see that
$$
dX_{t}=\alpha \sqrt{X_{t}}dw(t)+(\frac{3\alpha^{2}}{4}-\lambda X_{t})dt\,\, .
$$

Let us now assume $X_{0}=0$ and $\lambda \neq 0$ ; then, according to \textit{Corollary \ref{BESQ}},
$$
X_{t}=e^{-\lambda t} Y(\frac{\alpha^{2}(e^{\lambda t}-1)}{4\lambda})
$$
where $Y$ is a $BESQ^{3}$(squared Bessel process with parameter 
$3$) such that $Y(0)=0$.
But, for each fixed $t>0$, $Y_{t}$ has the same law as $tY_{1}$, and $Y_{1}=\vert\vert B_{1}\vert\vert^{2}$
is the square of the norm of a $3$--dimensional Brownian motion ; the law of $Y_{1}$
is therefore
$$
\frac{1}{\sqrt{2\pi}}e^{-\frac{u}{2}}\sqrt{u}\mathbf 1_{u\geq 0}du\,\, .
$$
Therefore the density $\rho_{t}(q)$ of the law of $z(t)$ is given by :
$$
\rho_{t}(q)=\frac{1}{\sqrt{2\pi}}\frac{16\lambda^{\frac{3}{2}}}{\alpha^{3}(1-e^{-\lambda t})^{\frac{3}{2}}}q^{2}e^{-\displaystyle\frac{2\lambda q^{2}}{\alpha^{2}(1-e^{-\lambda t})}}
$$
and
$$
\forall t>0 \,\, \eta_{*}(t,q)=\frac{\rho_{t}(q)}{\eta(t,q)}=
\frac{16\lambda^{\frac{3}{2}}}{\alpha^{3}\sqrt{2\pi}}(1-e^{-\lambda t})^{-\frac{3}{2}}qe^{-\displaystyle\frac{3\lambda t}{4}-\displaystyle\frac{\lambda q^{2}}{\alpha^{2}\tanh(\frac{\lambda t}{2})}} \,\, .
$$

Here, too, one may check directly that $\eta_{*}$ satisfies Eq(\ref{eqconj}) above.\\

M.Houda \cite{theseHouda} has extended these computations.

This approach of symmetries for SDE has also been used for purposes other than stochastic finance (Cf \cite{AP} for instance).

\section{Conclusion and prospects}

It may seem strange to start from Cartan's geometrical ideas for the integrability of Hamilton-Jacobi-Bellman equation when the initial purpose is to solve some stochastic differential equations. In fact there are very good reasons for such an approach. The Wiener process involved in Eq(\ref{Intro.defz}) is itself, of course, indissociable from the free heat equation ($V=0$ in Eq(\ref{HJB.edpeta})). 
This heat equation does not carry any direct dynamical meaning except in analogy with the Schrödinger equation for the same Hamiltonian $H$. But the non-linear transformation Eq(\ref{HJB.defS}) changes the situation. Interestingly, it appeared for the first time, and in the other direction, in the historic publication of Schrödinger where he introduced the equation bearing his name (1926). In stochastic control theory it is often called Fleming's logarithmic transformation.\\
The resulting Hamilton-Jacobi-Bellman Eq(\ref{HJB.EDPS}) is the stochastic deformation of its classical counterpart, for the same system.
Its Laplacian term encodes the Brownian like properties of underlying trajectories. As such, the geometric study \enquote{à la Cartan} of HJB should provide the same dynamical informations as in classical mechanics, but along the appropriate stochastic deformation of classical paths, i.e. some diffusion processes.
As it is clear from the differential ideal $I=(\omega,\Omega, \beta)$ of \S 2, these underlying diffusions are entirely characterized in terms of some solutions of HJB. Moreover, the basis of a Lagrangian analysis (the form $\omega$) and an Hamiltonian one (the form $\Omega$) are included in the framework. This means that to study the conditions under which the ideal $I$ is invariant under isovectors dragging is to study at once all the dynamical symmetries of a very large class of diffusion processes associated with $H$. At the end, the method provides a collection of martingales of those processes which is the stochastic deformation of the class of first integrals of classical system. It is already known (cf \cite{LZ2}) that, with them, absolutely continuous transformations of diffusions can be made, a kind of quadrature of processes on the bases of their dynamics. In particular, starting from the geometric study of the free case, a large class of Hamiltonians with quadratic coefficients can be treated by the same token. It should be possible, however, to obtain a direct interpretation of those transformations (or \enquote{symplectic diffeomorphisms}) in extended phase space. This aspect should be considered in future works.\\
For a more complete overview of the program of stochastic deformation, which is closer to mathematical structures of quantum mechanics than what was mentioned here, one may consult \cite{R2}.

\end{document}